\newtheorem{definition}{Definition}
\newtheorem{example}{Example}
\newtheorem{remark}{Remark}
\newtheorem{proposition}{Proposition}
\newtheorem{corollary}{Corollary}
\newtheorem{theorem}{Theorem}
\newtheorem{lemma}{Lemma}
\newcommand{\keywords}{{\bf Keywords.}\ }
\newcommand{\subclass}{{\bf MSC2010.}\ }
\newcommand{\CWENO}{\ensuremath{\mathsf{CWENO}}}
\newcommand{\CWENOZ}{\ensuremath{\mathsf{CWENOZ}}}
\newcommand{\CWENOZDB}{\ensuremath{\mathsf{CWENOZDB}}}
\newcommand{\WENO}{\ensuremath{\mathsf{WENO}}}
\newcommand{\WENOZ}{\ensuremath{\mathsf{WENOZ}}}
\newcommand{\ENO}{\ensuremath{\mathsf{ENO}}}
\renewcommand{\vec}[1]{\mathbf{#1}}
\newcommand{\R}{\mathbb{R}}
\newcommand{\RR}{\mathbb{R}}
\newcommand{\Ogrande}{\mathcal{O}}
\newcommand{\Opiccolo}{o}
\newcommand{\dx}{\mathrm{d}\vec{x}}
\newcommand{\ca}[1]{\overline{#1}}
\newcommand{\Poly}[1]{\mathbb{P}^{#1}}
\newcommand{\Prec}{P_{\text{\sf rec}}}
\newcommand{\Popt}{P_{\text{\sf opt}}}
\newcommand{\Sopt}{\mathcal{S}_{\text{\sf opt}}}
\newcommand{\Sk}{\mathcal{S}_{k}}
\newcommand{\elle}{\textit l}
\newcommand{\mi}[1]{\textcolor{black}{\boldsymbol{#1}}}
\newcommand{\aalpha}{\mi{\alpha}}
\newcommand{\abeta} {\mi{\beta}}
\newcommand{\agamma}{\mi{\gamma}}
\newcommand{\auno}  {\mi{1}}
\newcommand{\azero}{\mi{0}}
\newcommand{\up}{{u^{\prime}(0)}}
\newcommand{\us}{{u^{\prime\prime}(0)}}
\newcommand{\ut}{{u^{\prime\prime\prime}(0)}}
\newcommand{\uq}{{u^{IV}(0)}}
\newcommand{\uc}{u^{V}(0)}
\begin{document}

\title{Optimal definition of the nonlinear weights in multidimensional Central WENOZ reconstructions}

\author{I. Cravero 
\thanks{
  Dipartimento di Matematica -
  Universit\`a di Torino --
  Via C. Alberto, 8 - Torino (Italy) --
  {\sl isabella.cravero@unito.it}
}
\and 
M. Semplice 
\thanks{
  Dipartimento di Matematica -
  Universit\`a di Torino --
  Via C. Alberto, 8 - Torino (Italy) --
  {\sl matteo.semplice@unito.it}
}
\and
G. Visconti
\thanks{
  RWTH Aachen University -
  Templergraben 55, 52062 Aachen, Germany --
  {\sl visconti@igpm.rwth-aachen.de}
}
}

\date{}

\maketitle

\begin{abstract}
  Central \WENO\ reconstruction procedures have shown very good performances in finite volume and finite difference schemes for hyperbolic conservation and balance laws in one and more space dimensions, on different types of meshes. Their most recent formulations include \WENOZ-type nonlinear weights, but in this context a thorough analysis of the definition of the global smoothness indicator $\tau$ is still lacking.
  In this work we first prove results on the asymptotic expansion of multi-dimensional Jiang-Shu smoothness indicators that are useful for the rigorous design of \CWENOZ\ schemes also beyond those considered in this paper. Next, we introduce the optimal definition of $\tau$ for the one-dimensional \CWENOZ\ schemes and for one example of two-dimensional \CWENOZ\ reconstruction. 
  Numerical experiments of one and two dimensional test problems show the good performance of the new schemes.
  
\keywords{Central WENOZ (\CWENOZ) \and
 essentially non-oscillatory reconstructions \and
 finite volume schemes \and
 smoothness indicators}
 
\subclass{65M08 \and 65M20}
\end{abstract}

\section{Introduction}\label{sec:intro}

In this paper we focus on point-value reconstruction from cell averages employed in multidimensional finite volume schemes for the numerical approximation of the solutions of systems of hyperbolic balance laws of the form
\begin{equation} \label{eq:balanceLaw}
\partial_t \vec{u} + \nabla_{\vec{x}} \cdot \vec{f}(\vec{u}) = \vec{s}(\vec{u}).
\end{equation}
Here $\vec{u} = \vec{u}(t,\vec{x}):\R^+\times\R^n\to\R^J$ is the unknown describing the physical states, $n$ is the number of space dimensions and $J$ is the number of equations. The vector valued function $\vec{f}(\vec{u})$ is the  flux and is a smooth known function of $\vec{u}$, whose Jacobian is assumed diagonalizable with real eigenvalues along all possible directions in $\R^n$ in order to guarantee that \eqref{eq:balanceLaw} is a hyperbolic system. Finally, the vector valued function $\vec{s}(\vec{u})$ is the source term. We assume that equation \eqref{eq:balanceLaw} is set in a bounded domain $ {\cal{D}} \subset\R^n$ and is complemented by appropriate initial conditions and boundary conditions on $\partial\mathcal{D}$.

Finite volume methods partition the domain $\mathcal{D}$ into cells 
$\Omega_j \subset \R^n$ such that 
$ {\cal{D}} = \cup_j \Omega_j$, where $\Omega_i\cap\Omega_j\subset\partial\Omega_i$ for any $i\neq j$. 
The time evolution of the solution is computed by solving the system of ordinary differential equations (ODEs)
\begin{equation} \label{eq:TheSemidis}
\frac{\mathrm{d}\ca{\vec{u}}_j}{\mathrm{d}t} = -\frac{1}{|\Omega_j|}\int_{\partial\Omega_j} \vec{f}(\vec{u}(t,\gamma))\cdot \vec{n}(\gamma) \mathrm{d}\gamma +  \frac{1}{|\Omega_j|}\int_{\Omega_j} \vec{s}(\vec{u}(t,\vec{x})) \dx
\end{equation}
for the cell averages
\begin{equation} \label{eq:cellaverage}
\ca{\vec{u}}_j (t) = \frac 1 {\vert \Omega_j \vert} \int_{\Omega_j} \vec{u}(t,\vec{x}) \dx,
\end{equation} 
where $\partial\Omega_j$ is the cell boundary, $\vert \Omega_j \vert$ is the volume of $\Omega_j$, and $\vec{n}$ is the outward unit normal to $\partial\Omega_j$. 
In order to compute the right-hand side of \eqref{eq:TheSemidis}, one needs point values of $\vec{u}$ at the cell boundary (and inside the cell, if $\vec{s}\neq\vec{0}$). Since only the cell averages are stored, such point values must be approximated  by a so-called reconstruction procedure, which computes  a function $\vec{R}_j(\vec{x})$ that ``reconstructs'' the unknown function  $\vec{u}(\vec{x})$ in the cell $\Omega_j$ with a chosen accuracy  $G +1 $.
In particular, such reconstructions are needed at the nodes of the quadrature rules used to compute the integrals in \eqref{eq:TheSemidis}. For example, a third order scheme in two space dimensions would require at least 8 nodes for the boundary term and 4 extra inner nodes for the source.

It is well known that defining $\vec{R}_j(\vec{x})$ as a polynomial interpolant on a fixed stencil, yields oscillatory results in any high order scheme.
The most popular reconstruction procedure for accuracy greater than two is surely the Weighted Essentially Non Oscillatory reconstruction (\WENO) \cite{JiangShu:96,Shu:2009:WENOreview}, which reconstructs point values as a convex combination of point values of low-degree interpolants. The coefficients of the combination, the so-called \emph{nonlinear weights}, depend nonlinearly on the data and are designed to reproduce, at a specific point, the value of a high-degree central interpolant in smooth areas and to provide a lower accuracy but non-oscillatory reconstruction elsewhere.
The whole construction depends on the existence and positivity of a set of, so-called, {\em linear} or {\em optimal weights}.

The  \WENO\ linear weights are thus fixed by accuracy requirements and depend on the location of the reconstruction point and on the size and relative location of the neighbouring cells. As a consequence, the computation of nonlinear weigths must be repeated for each reconstruction point. Moreover, even in one space dimension, the existence (and positivity) of the \WENO\ linear weights is not guaranteed for a generic point location \cite{QiuShu:02}.

Attempts to extend the definition of \WENO\ to non-Cartesian meshes had to tackle the formidable task of computing the linear weights for very general cell arrangements and most authors choose either to stick to the original philosophy and use only low order polynomials but very complex computations of the optimal linear weights for each reconstruction point 
\cite{HuShu99:WENO3d,ShiHuShu:2002,ZhangShu09:WENO3d}
or to employ a central high order polynomial with low order directionally-biased ones
\cite{BRDM:divfreeWENO,DK07:nonlinear,TTD11:WENOmixed3D}.

We point out that the older \ENO\ approach \cite{HartenEtAl:ENO}, which simply selects a reconstruction polynomial among a set of candidates, does not depend on point- nor accuracy-dependent quantities and additionally provides a polynomial that is defined and uniformly accurate in the whole cell. However the stencil of an \ENO\ reconstruction is much wider that the stencil of a \WENO\ one of the same order.

The Central \WENO\ (\CWENO) reconstruction, first introduced by Levy, Puppo and Russo in the one-dimensional context  \cite{LPR:99}, enjoys the benefits of both \ENO\ and \WENO, but none of their drawbacks. In fact, \CWENO\ makes use of linear weights that can be fixed independently of the reconstruction point, since they need not satisfy accuracy requirements. As a consequence, \CWENO, unlike \WENO, does not suffer from the existence and positivity issues of the weights and moreover, like \ENO, it yields a reconstruction polynomial that is valid in the entire cell. The nonlinear weights are then computed by a non-linear procedure that is very similar to the \WENO\ one, but needs to be applied only once per cell and not once per reconstruction point. This is particularly advantageous for very high order schemes, given the high number of flux quadrature points, and even more for balance laws, due to the additional source term quadrature (see e.g. \cite{BRS:stiffsource}).

After the first paper, the one-dimensional \CWENO\ technique was extended to fifth order \cite{Capdeville:08}, the properties of the third order versions were studied in detail on uniform meshes  \cite{Kolb:14} and non-uniform ones  \cite{CS:epsweno}. Finally arbitrary high order variants were introduced \cite{CPSV:cweno}, where the class of \CWENO\ reconstructions was defined. The Adaptive Order \WENO\ of \cite{BGS:wao} also belong to this class.

To extend 
the \CWENO\ procedure  to more than one space dimension, \cite{LPR:00:SIAMJSciComp} combined a bivariate central parabola and four linear polynomials. In a similar fashion, \cite{SCR:CWENOquadtree} obtained a third-order accurate reconstruction on two-dimensional quad-tree meshes.
Other approaches define the reconstruction as combination of central polynomials defined in each neighbour \cite{LPR:00:ANM,LP:12} or combination of polynomials, each of which can be of degree one or two\cite{gallardo2011}.
More recently,  
in the finite volume schemes as a seed for an {\sf ADER} predictor, \cite{DBSR:ADER_CWENO} combined high and low degree polynomials  to obtain an arbitrary high order \CWENO\ construction for triangular and tetrahedral meshes. Later, this idea was also exploited  in the subcell limiter for a discontinuous Galerkin scheme, see \cite{DBSR:DG_CWENO}.

An alternative definition of the nonlinear weights has been given in \cite{BCCD:2008:wenoz5} and further developed in \cite{CCD:11,DB:2013}, obtaining the \WENOZ\ reconstruction procedure. The nonlinear weights of \WENOZ\ are based on an additional global smoothness indicator $\tau$ which is a linear combination of the \WENO\ indicators. A similar definition of the nonlinear weights was employed in the \CWENO\ context as well, obtaining the so-called \CWENOZ\ schemes. Although sometimes under different names, \cite{CPSV:coolweno,Zahran09,ZhuQiu:FVCWENO,ZhuQiu18:triFV} present schemes of this class.
\cite{CPSV:coolweno} compares the \WENOZ\ and \CWENOZ\ reconstructions, with the \WENO\ and \CWENO\ ones, finding that they have better spectral properties.
Note that a further technique to compute the nonlinear weights was introduced in \cite{ABC16:improvedWENOZ}, which still relies on the \WENOZ\ idea and introduces an additional indicator.

In this paper we focus on the generalization of the \CWENOZ\ reconstruction of \cite{CPSV:coolweno} to higher space dimensions, without relying on dimensional splitting, in the same spirit as \cite{SCR:CWENOquadtree}.
A very important contribution of this paper is a set of theoretical results that give sufficient conditions for a \CWENOZ\ reconstruction to be of optimal order of accuracy. This is discussed and enriched by many examples in \S\ref{sec:indicators}, where some useful results on the Taylor expansion of Jiang-Shu indicators in one or more space dimensions are also proven.

In \S\ref{sec:glob_ind}, we build on the previous results to derive %
new and optimal definition of the $\tau$ parameter of the one- and two-dimensional \CWENOZ\ reconstructions tested in this paper. \S\ref{sec:numerics} contains the numerical tests for the accuracy of the reconstructions and on their application to semidiscrete finite-volume schemes for  systems of conservation laws in one and two space dimensions. Finally, \S\ref{sec:concl} summarizes the findings of this paper and Appendix \ref{sec:appendix} contains the explicit expression of the quantities involved in the results of \S\ref{sec:indicators} in some well-known cases.

\section{Analysis of $\boldsymbol{n}$-dimensional $\CWENO$ and $\CWENOZ$ reconstructions}
\label{sec:indicators}


In this section we prove the theoretical results to analyse the \CWENO\ and \CWENOZ\ reconstruction procedures in one and more space dimensions. To this end, we restrict here to the scalar case, since usually, in the case of systems of conservation laws, the reconstruction procedures are applied component-wise, directly to the conserved variables or after the local characteristic projection.

We consider a Cartesian grid in $n$ space dimensions, composed by a union of rectangular cells $\Omega_k \subset \R^n$ of dimension $\boldsymbol{\Delta} \vec{x} $ and diameter $\rho:=\|\boldsymbol{\Delta} \vec{x}\|_2$. 

\subsection{Definition and examples} \label{ssec:cwdef}
We recall here the definition of the $\CWENO$ and $\CWENOZ$ operators, as given in \cite{CPSV:cweno} and \cite{CPSV:coolweno}, respectively. Note that the difference of the two methods is in the computation of the nonlinear coefficients: the $\CWENOZ $ method uses the idea of Borges, Carmona, Costa and Don in \cite{BCCD:2008:wenoz5}, where they introduced an extra smoothness indicator $\tau$ and a new definition of the nonlinear coefficients, that drives them closer to their optimal values in the smooth case.

Let $\Poly{k}_{n}$ be the space of polynomials in $n$ variables with degree at most $k\in\mathbb{N}$.

In order to describe the reconstruction, we consider as given data the  cell averages $\ca{u}_k$ of a function ${u}$ over the cells of a grid.
To simplify the notation, we describe the reconstruction of ${u}(\vec{x})$ in the cell $\Omega_0$ centred in the point  $\vec{x}_0= \vec{0}. $

\begin{definition}
Let $\mathcal{S}$ be set of $\eta$ cell indices including $0$ (stencil). We associate to $\cal{S}$ the polynomial
 \begin{equation} \label{eq:ps}
 {P}^{(d)}_{\mathcal{S}}(\vec{x})= \arg \min \left \{ \sum_{i \in \mathcal{S}} \vert \langle P^{(d)}_{\mathcal{S}} \rangle_{\Omega_i} - \ca{u}_i \vert^2, \, 
 \text{such that }  
  P^{(d)}_{\mathcal{S}}\in\Poly{d}_n,\,
 \langle  P^{(d)}_{\mathcal{S}} \rangle_{\Omega_0}= \ca{u}_0 \right \},
 \end{equation}
where the operator $\langle\cdot\rangle_{\Omega_i}$ denotes
 the cell average of its argument over the cell $\Omega_i$. Of course $\eta$ should be larger than the number of coefficients in $P^{(d)}_{\mathcal{S}}$.
\end{definition}

\begin{remark}
If the number of coefficients in $ P^{(d)}_{\mathcal{S}}(\vec{x}) $ is equal to $\eta$, then the constrained least square polynomial $ P^{(d)}_{\mathcal{S}}(\vec{x}) $ is the ordinary polynomial interpolating the given cell averages of the stencil $\cal{S}$ exactly.
We observe, as in \cite{SCR:CWENOquadtree}, that if
\eqref{eq:ps} is overdetermined, then it suffices to choose a basis such that
$\langle {\varphi}_k \rangle_{\Omega_0} =0$
and express
$$P^{(d)}_{\mathcal{S}}(\vec{x}) = \ca{u}_0 + \sum_k c_k {\varphi}_k(\vec{x}) $$
in order to turn \eqref{eq:ps} into an unconstrained least square problem for the coefficients $c_k$ with right hand sides $\ca{u}_k-\ca{u}_0$.
\end{remark}

The nonlinear selection or blending of polynomials taking place in any Essentially Non-Oscillatory reconstruction relies on so called {\em oscillation indicators}.
These are in general scalar quantities $I[P]$ associated to a polynomial $P$ that are designed in such a way that $I[P]\to0$ under grid refinement if $P$ is associated to smooth data. Moreover, $I[P]$ is in all cases a bounded quantity, even if a discontinuity is present in the stencil of $P$; in this later case it is desirale that $I[P]\asymp1$. The Jiang-Shu indicators defined in \cite{JiangShu:96} are the most widely used (see Definition~\ref{def:ind}).

We are now in position to define the \CWENO\ and \CWENOZ\ reconstructions.

\begin{definition} \label{def:CWENO}
Given a stencil $\Sopt$ that includes the cell $\Omega_0$, let $\Popt\in\Poly{G}_n$ (\emph{optimal polynomial}) be the polynomial of degree $G$ associated to $\Sopt$. Further, let $ {P}_1, {P}_2, \ldots, {P}_m$ be a set of $m\geq 1$ polynomials  of degree $g$ with $g<G$ associated to substencils such that $0\in\Sk\subset\Sopt$.
Let also $\{d_k\}_{k=0}^m$ be a set of strictly positive real coefficients such that $\sum_{k=0}^m d_k=1$.

The $\CWENO$ and the $\CWENOZ$ operators compute a reconstruction polynomial
\[ 
\begin{aligned} 
\Prec^{\text{CW}}  & 
= \CWENO ({\Popt},{P}_1,\ldots,{P}_m) \in \Poly{G}_{n} \\
\Prec^{\text{CWZ}} & 
= \CWENOZ ({\Popt},{P}_1,\ldots,{P}_m) \in \Poly{G}_{n} 
\end{aligned} 
\]
as follows:
\begin{enumerate}
\item first, introduce the polynomial ${P}_0$ defined as
\begin{equation} \label{eq:p0}
{P}_0(\vec{x}) = \frac{1}{d_0}\left({\Popt}(\vec{x})-\sum_{k=1}^{m}d_k {P}_k(\vec{x}) \right) \in \Poly{G}_{n};
\end{equation}
\item compute
\[
I_0=I[\Popt], 
\qquad
I_k=I[P_k], k\geq1
\]
where $I[P]$ is a suitable regularity indicator
\item compute  the nonlinear coefficients $\{\omega_k\}_{k=0}^m$ or $\{\omega^Z_k\}_{k=0}^m$ as
\begin{subequations}\label{eq:Omega:CWandCWZ}
\begin{itemize}
\item[2.i)] $\CWENO$ operator:   for $k=0,\dots,m$ 
\begin{equation} \label{eq:Omega}
\alpha_k = \frac{d_k}{(I_k+\epsilon)^{\elle}},
\qquad
\omega^{\text{\sf CW}}_k = \frac{\alpha_k}{\sum_{i=0}^{m}\alpha_i},
\end{equation}
\item[2.ii)] $\CWENOZ$ operator: for $k=0,\dots,m$
\begin{equation} \label{eq:OmegaZ}
\alpha^Z_k = {d_k} \left( 1 + \left( \frac {\tau} {I_k+\epsilon} \right)^{\elle} \right),
\qquad
\omega^{\text{\sf CWZ}}_k = \frac{\alpha^Z_k}{\sum_{i=0}^{m}\alpha^Z_i},
\end{equation}
\end{itemize}
\end{subequations}
where $\epsilon$ is a small positive quantity, 
$\elle \ge 1$
and, in the case of \CWENOZ, $\tau$ is a global smoothness indicator
\item and finally define the reconstruction polynomial as
\begin{subequations} \label{eq:prec}
\begin{align}
{\Prec^{\text{CW}}}(\vec{x}) 
&= \sum\limits_{k=0}^{m} \omega^{\text{\sf CW}}_k 
{P}_k(\vec{x}) \in\Poly{G}_{n}, \label{eq:precCW}
\\
\Prec^{\text{CWZ}}(\vec{x}) 
&= \sum\limits_{k=0}^{m} \omega^{\text{\sf CWZ}}_k {P}_k(\vec{x}) \in\Poly{G}_{n}. \label{eq:precCWZ}
\end{align}
\end{subequations}
\end{enumerate}
\end{definition}

This extends the definitions given in \cite{CPSV:cweno,CPSV:coolweno}. We point out that the use of equation \eqref{eq:p0} is what characterizes a Central WENO reconstruction and that both WENO-ZQ \cite{ZhuQiu:FVCWENO} and WAO \cite{BGS:wao} in fact belong to this class.

Note that the reconstruction polynomial defined in \eqref{eq:prec} can be evaluated at any reconstruction point in the computational cell at a very low computational cost, since the coefficients of $\Prec$ can be computed with \eqref{eq:prec} in any convenient basis for $\Poly{G}_n$. It is important to note that the linear coefficients $\{d_k\}_{k=0}^m$ do not depend on the reconstruction point and thus the nonlinear coefficients \eqref{eq:Omega} and \eqref{eq:OmegaZ} can be computed once per cell and not once per reconstruction point, as in the standard \WENO. This makes the \CWENO\ idea less computationally expensive for balance laws, multidimensional computations and unstructured meshes than the \WENO\ procedure.

\begin{remark}
  Thanks to the constraint included in \eqref{eq:ps}, if all the interpolating polynomials involved in Definition \ref{def:CWENO} are defined by \eqref{eq:ps}, they all satisfy the conservation property on the reconstruction cell $\Omega_0$. Then also the cell average on $\Omega_0$ of $P_0$, $\Prec^{\text{CW}}$ and $\Prec^{\text{CWZ}}$ is exactly $\ca{u}_0$ and the reconstruction is conservative.
\end{remark}

The accuracy and non-oscillatory properties of \CWENO\ and \CWENOZ\ schemes are guaranteed by the dependence of their nonlinear weights \eqref{eq:Omega:CWandCWZ} on suitable regularity indicators $I_k$.
On smooth data, \eqref{eq:Omega:CWandCWZ} ought to drive the nonlinear weights sufficiently close to the optimal ones, so that ${\Prec} \approx {\Popt}$ and the reconstruction should reach the optimal order of accuracy $G+1$.

\begin{remark} \label{rem:weights}
Assume that the stencils are chosen such that the approximation orders of $\Popt \in \Poly{G}_{n} $ and $P_k \in \Poly{g}_{n}$, for $k=1,\ldots,m$, are
 $$ \begin{aligned}
    \vert \Popt (\vec{x}) - u(\vec{x}) \vert 
    = \Ogrande (\rho^{G+1}) 
    \quad \text{and} \quad
	\vert P_k (\vec{x}) - u(\vec{x}) \vert 
    = \Ogrande (\rho^{g+1}) 
	\end{aligned} 
 $$ 
 at any point $\vec{x}$ in the computational cell, if the function $u(\vec{x})$ is sufficiently regular.
 Then, using \eqref{eq:prec} and since $\Popt=\sum_{k=0}^{m}d_kP_k$,
 the reconstruction error at $\vec{x}$ is
	\begin{equation*}
	u(\vec{x})- \Prec(\vec{x}) 
    =\underbrace{(u(\vec{x})-\Popt(\vec{x}))}_{\Ogrande(\rho^{G+1})}
	+ \sum_{k=0}^{m} 
	(d_k-\omega_k)
	\underbrace{(P_k(\vec{x})-u(\vec{x}))}_{\Ogrande(\rho^{g+1})} 
    ,
	\end{equation*}
    where $\omega_k$ and $\Prec$ are defined either by \eqref{eq:Omega} and \eqref{eq:precCW} or by \eqref{eq:OmegaZ} and \eqref{eq:precCWZ}.
    Thus the condition $(d_k-\omega_k)=\Ogrande(\rho^{G-g})$ is sufficient to ensure that the accuracy of the \CWENO\ and \CWENOZ\ reconstruction equals the accuracy of its first argument $\Popt$ in the case of smooth data.
\end{remark}

On the other hand, if there is an oscillating ${P}_{\hat{k}}$ for some $\hat{k}\in\{1,\dots,m\}$, then $I_{\hat{k}}\asymp1$ and $w_{\hat{k}} \approx 0$; moreover also $w_0 \approx 0$ (see \cite{CPSV:cweno}) and
$\Prec$ is a nonlinear combination of polynomials of degree $g$: the accuracy of the reconstruction reduces to $g+1$, but spurious oscillations in the PDE solution can be  controlled.
\begin{remark} \label{rem:I0Iopt}
Note that in Definition \ref{def:CWENO}, the regularity indicator $I_0$ is computed by using the optimal polynomial $\Popt$. In previous works, see, e.g., \cite{CPSV:coolweno,CPSV:cweno,CS:epsweno,SCR:CWENOquadtree}, instead $I_0$ was defined as $I_0=I[P_0]$. 
\end{remark}

The positive parameter $\epsilon$ prevents the division by zero in the computation of the nonlinear weights \eqref{eq:Omega} and \eqref{eq:OmegaZ}. One would like to choose it as small as possible, in order to control spurious oscillations. However, in \cite{CS:epsweno,Kolb:14}, the authors proved that the choice of $\epsilon$ can influence the convergence of the method on smooth parts of the solution. In \cite{DB:2013} it was proven that for \WENOZ\ schemes the condition on $\epsilon$ to achieve the optimal order of accuracy is weaker than in standard \WENO. In this paper, we also study the lower bound for $\epsilon$ in \CWENOZ (see \S\ref{sec:glob_ind} and Table~\ref{tab:epsilon}).

In view of Remark ~\ref{rem:weights}, we point out that the weights of the \CWENOZ\ reconstruction defined in \eqref{eq:OmegaZ}, when compared to equation \eqref{eq:Omega} for \CWENO, are designed to drive the nonlinear coefficients close to the optimal ones in the case of smooth data: this goal is achieved, as in standard \WENOZ\ schemes \cite{BCCD:2008:wenoz5,CCD:11,DB:2013},  by including the additional regularity indicator $\tau$ in the computation of the nonlinear weights.

This paper addresses the optimal definition of the extra regularity indicator $\tau$ for the \CWENOZ\ scheme, thus generalizing the results of \cite{DB:2013} to the case of Central \WENO\ reconstructions and improving the \CWENOZ\ reconstructions of \cite{CPSV:coolweno}.

In Definition \ref{def:CWENO} the number $m$ and the degree $g$ of the lower-degree polynomials is not specified nor linked to the degree $G$ of the optimal polynomial. Here we give some hints on traditional choices that have been put forward in the literature.

\begin{example}[1D of accuracy $2r-1$]
\label{ex:CW1d}
In one space dimension it is customary to choose $g=r-1$, $m=g+1$ and $\Popt$ is of degree $G=2r-2$. This latter is determined by the exact interpolation of the data in a symmetric stencil $\cal{S}_{\text{opt}}$ centered on $\Omega_0$ containing the cells $\Omega_{-g},\dots,\Omega_{g}$. 
Furthermore, for $k=1, \cdots, m$, the lower-degree polynomials ${P}_k$ are defined as the exact interpolants on the substencils
${\cal S}_k=\{k-r,\ldots,k-1 \} \subset {\cal S}_{\text{opt}}$. This is the same choice considered in \cite{CPSV:cweno,CPSV:coolweno}.
\end{example}

\begin{example}[2D of accuracy $3$ on quad-tree grids]
\label{ex:CW2d}
In more space dimensions,
\cite{SCR:CWENOquadtree} considers a \CWENO\ reconstruction of order $3$ in which $\mathcal{S}_{\text{opt}}$
is composed by all cells that intersect $\Omega_0$ in a face/edge or vertex in a quad-tree mesh, while $m=4$ and ${\cal S}_k $ are substencils in North-East, North-West, South-East and South-West directions. The optimal polynomial is of degree $G=2$ and is chosen as $\Popt=P^{(2)}_{\cal{S}_{\text{opt}}}$
and the four degree $g=1$ polynomials are chosen as $P_k= P^{(1)}_{{\cal S}_k}$. In \cite{SCR:CWENOquadtree} is proven that the aforementioned definitions of the stencils always lead to overdetermined systems in the least squares problem \eqref{eq:ps} on any possible configuration in a quad-tree mesh. In the same paper, a generalization to octrees in 3 space dimensions is also suggested.
\end{example}

\begin{example}[2D and 3D of arbitrary accuracy on simplicial meshes]
\label{ex:cwader}
\CWENO\ reconstructions of arbitrary order on simplicial meshes in two and three space dimensions were introduced in \cite{DBSR:ADER_CWENO}. There, a polynomial of degree $G \geq 2$ is combined with $m=3$ (in 2D) or $m=4$ (in 3D) polynomials of degree $g=1$ in order to enhance the non-oscillatory properties of the schemes.
At order 3 and 4, also \cite{ZhuQiu18:triFV} presents a similar reconstruction.
\end{example}

\begin{example}[2D of order 4 on cartesian meshes]
\label{ex:cwandaluz}
\CWENO\ reconstructions of order 4 on uniform cartesian meshes in two space dimensions were introduced in \cite{CWENOandaluz}. There, a polynomial of degree $G=3$ defined by a diamond-shaped central stencil is combined with $m==4$ polynomials of degree $g=1$ or $g=2$.
\end{example}

\subsection{Properties of the Jiang-Shu smoothness indicators}
\label{ssec:JiangShuAnalysis}

We now turn to the study of the accuracy of the  \CWENOZ\ reconstructions, but we first need to prove some general properties of the smoothness indicators.
The results of this section generalize the ones of \cite{DB:2013} for $n>1$ and of course specialize to them in the case $n=1$.

In this section we use the multi-index notation for partial derivatives. In $n$ space dimensions, 
for a smooth enough function $q$, we denote, for $\aalpha = (\alpha_1,\ldots, \alpha_n)\in\mathbb{N}^n$,
$$ \partial_{\aalpha} q :=  \frac {\partial^{\vert \aalpha \vert} q}
{\partial x_1^{\alpha_1} \ldots \partial x_n^{\alpha_n}}.
$$
Obviously, for $n=1$, $ \partial_{\aalpha} q$ denotes the ordinary derivative.

In this paper we employ the multi-dimensional generalization of the classical smoothness indicators defined in \cite{JiangShu:96}, which was already employed since \cite{HuShu:WENOtri}.

\begin{definition}\label{def:ind}
The smoothness indicator of a polynomial $q\in\Poly{M}_n$ is
\begin{equation} \label{eq:ind}
I[q] := 
\sum_{\vert \abeta \vert =1}^M  \boldsymbol{\Delta} \vec{x}^{2 \abeta -\auno}  \int_{\Omega_0} (\partial_{\abeta} q (\vec{x}))^2 \dx . \end{equation}
\end{definition}

Here $\vec{x}\in\R^n$ denotes
$\vec{x}^{\aalpha} := x_1^{\alpha_1} \cdots x_n^{\alpha_n}$.
In the sequel we will also use the following notations. 
For each $k \in \mathbb{N}$ we denote $\mi{k} := (k,k,\dots,k)$. For any multi-index $\aalpha\in\mathbb{N}^n$, we define $\aalpha ! := \alpha_1 ! \cdots \alpha_n !, \;
\vert \aalpha \vert := \sum_{i=1}^{n}\alpha_i$ and $\Pi \aalpha := \prod_{i=1}^{n}\alpha_i.$ 
We say that 
$\aalpha$ is even if $\alpha_i$ is even for each $i=1,\ldots,n$. Finally, we define a partial ordering among multi-indices by
$\aalpha \le \abeta  $ if $ \alpha_i \le \beta_i $ for all $i=1,\ldots,n$.

In the sequel, we will use the notation $\theta(g(\rho))=r$ to mean that  $g(\rho)= a_r \rho^r+o(\rho^r)$ for $ \rho \to 0$, with $ a_r \neq 0$.

Let $q(\vec{x}) = \sum_{\vert \aalpha \vert \le M} a_{\aalpha}
{\vec{x}}^{{\aalpha}}$  be a polynomial of degree $M$ and let $\vec{a} = \{ a_{\aalpha} \}_{\vert \aalpha \vert \le M}$ be the vector of coefficients indexed by $ \aalpha.$  Let us focus again on the cell $\Omega_0$ centered in the point $\vec{0}$, whose sides are $\boldsymbol{\Delta} \vec{x},$  with diameter $\rho$.

\begin{proposition} \label{prop:exA} 
Let $q$ be a polynomial in $\Poly{M}_n$ and let
$\vec{w}(q)$ be vector of size 
$\mathsf{dim}(\Poly{M}_n)=\binom{M+n}{n}$
whose components are
\begin{equation} \label{eq:w}
(\vec{w}(q))_{{\aalpha}} = \boldsymbol{\Delta} \vec{x}^{{\aalpha}} \int_{\Omega_0} \partial_{\aalpha} q(\vec{x}) \dx, \qquad  \forall \aalpha \text{  s.t. } \vert \aalpha \vert =0, 1, \dots, M.
\end{equation}
Then, there exists a square symmetric matrix  $\vec{A}$ with  constant entries such that
$$ \boldsymbol{\Delta} \vec{x}^{\auno}  \int_{\Omega_0} (q(\vec{x}))^2 \dx = \langle {\vec{w}(q), \vec{A} \vec{w}(q)} \rangle,
$$
\end{proposition}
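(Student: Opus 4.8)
The plan is to observe that both sides of the claimed identity are \emph{quadratic forms} in the coefficient vector $\vec{a}=\{a_{\aalpha}\}_{|\aalpha|\le M}$ of $q$, and to produce $\vec{A}$ by a linear change of coordinates from $\vec{a}$ to $\vec{w}(q)$. Writing $q(\vec{x})=\sum_{|\aalpha|\le M}a_{\aalpha}\vec{x}^{\aalpha}$, one has $\int_{\Omega_0}q^2\dx=\langle\vec{a},\vec{M}\vec{a}\rangle$ with the symmetric moment matrix $M_{\aalpha\abeta}=\int_{\Omega_0}\vec{x}^{\aalpha+\abeta}\dx$, whereas $\vec{w}(q)$ is manifestly linear in $\vec{a}$, say $\vec{w}(q)=\vec{B}\vec{a}$. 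If $\vec{B}$ is invertible then $\vec{a}=\vec{B}^{-1}\vec{w}(q)$, and the candidate $\vec{A}=\boldsymbol{\Delta}\vec{x}^{\auno}\,\vec{B}^{-\mathsf T}\vec{M}\vec{B}^{-1}$ is symmetric (because $\vec{M}$ is) and yields the asserted identity. The whole problem thus reduces to two points: the invertibility of $\vec{B}$, and the fact that $\vec{A}$ can be chosen independent of $\boldsymbol{\Delta}\vec{x}$.

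To dispose of the cell-size dependence cleanly, I would first rescale to the reference cell $\widehat{\Omega}=[-\tfrac12,\tfrac12]^n$ via $x_i=\Delta x_i\,\xi_i$, setting $\widehat{q}(\boldsymbol{\xi})=q(\boldsymbol{\Delta}\vec{x}\odot\boldsymbol{\xi})$. Using $\dx=\boldsymbol{\Delta}\vec{x}^{\auno}\,\mathrm{d}\boldsymbol{\xi}$ together with the chain rule $\partial_{\aalpha}\widehat{q}(\boldsymbol{\xi})=\boldsymbol{\Delta}\vec{x}^{\aalpha}(\partial_{\aalpha}q)(\boldsymbol{\Delta}\vec{x}\odot\boldsymbol{\xi})$ (the left derivatives in $\boldsymbol{\xi}$, the right ones in $\vec{x}$), a direct computation gives $(\vec{w}(q))_{\aalpha}=\boldsymbol{\Delta}\vec{x}^{\auno}\int_{\widehat{\Omega}}\partial_{\aalpha}\widehat{q}\,\mathrm{d}\boldsymbol{\xi}$ and $\boldsymbol{\Delta}\vec{x}^{\auno}\int_{\Omega_0}q^2\dx=(\boldsymbol{\Delta}\vec{x}^{\auno})^2\int_{\widehat{\Omega}}\widehat{q}^2\,\mathrm{d}\boldsymbol{\xi}$. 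Since the common factor $(\boldsymbol{\Delta}\vec{x}^{\auno})^2$ appears on both sides it cancels, and the statement becomes the \emph{same} identity on the fixed cell $\widehat{\Omega}$, with $\widehat{w}_{\aalpha}(\widehat{q})=\int_{\widehat{\Omega}}\partial_{\aalpha}\widehat{q}\,\mathrm{d}\boldsymbol{\xi}$. This makes it clear that $\vec{A}$ may be taken with pure-constant entries, depending neither on $q$ nor on $\boldsymbol{\Delta}\vec{x}$; these are exactly the roles the scaling weights $\boldsymbol{\Delta}\vec{x}^{\aalpha}$ in \eqref{eq:w} are designed to play.

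The crux is the invertibility of the reference map $\widehat{\vec{a}}\mapsto\widehat{\vec{w}}(\widehat{q})$, with matrix $\widehat{\vec{B}}$. Expanding $\partial_{\aalpha}\widehat{q}=\sum_{\abeta\ge\aalpha}\widehat{a}_{\abeta}\tfrac{\abeta!}{(\abeta-\aalpha)!}\boldsymbol{\xi}^{\abeta-\aalpha}$ and integrating over $\widehat{\Omega}$, the entry $\widehat{B}_{\aalpha\abeta}$ is nonzero only when $\abeta\ge\aalpha$ and $\abeta-\aalpha$ is even; in particular it forces $|\abeta|\ge|\aalpha|$, with equality only for $\abeta=\aalpha$, where the diagonal value comes from the constant term and equals $\aalpha!\,|\widehat{\Omega}|=\aalpha!\neq0$. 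Ordering the multi-indices by nondecreasing total degree therefore renders $\widehat{\vec{B}}$ triangular with nonvanishing diagonal, hence invertible; equivalently, a downward induction on $|\aalpha|$ shows $\widehat{\vec{w}}(\widehat{q})=\vec{0}\Rightarrow\widehat{q}=0$, the base case being that at top degree $\partial_{\aalpha}\widehat{q}$ is the constant $\aalpha!\,\widehat{a}_{\aalpha}$.

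Finally, substituting $\widehat{\vec{a}}=\widehat{\vec{B}}^{-1}\widehat{\vec{w}}$ into $\int_{\widehat{\Omega}}\widehat{q}^2\,\mathrm{d}\boldsymbol{\xi}=\langle\widehat{\vec{a}},\widehat{\vec{M}}\widehat{\vec{a}}\rangle$, with $\widehat{M}_{\aalpha\abeta}=\int_{\widehat{\Omega}}\boldsymbol{\xi}^{\aalpha+\abeta}\,\mathrm{d}\boldsymbol{\xi}$, produces the constant symmetric matrix $\vec{A}=\widehat{\vec{B}}^{-\mathsf T}\widehat{\vec{M}}\widehat{\vec{B}}^{-1}$; transporting back through the rescaling recovers the assertion on $\Omega_0$. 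Symmetry of $\vec{A}$ is inherited from that of $\widehat{\vec{M}}$, and constancy from the reference-cell reduction. I expect the only genuinely nonroutine step to be the invertibility of $\widehat{\vec{B}}$ (the triangular/degree-induction argument above); everything else is bookkeeping of the scaling factors, which are tailored precisely so that the $\boldsymbol{\Delta}\vec{x}$-dependence cancels.
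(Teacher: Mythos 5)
Your proof is correct and follows essentially the same route as the paper's: both reduce the identity to the invertibility of the coefficient-to-$\vec{w}$ map, which is triangular (nonzero entries only for $\abeta\ge\aalpha$ with $\abeta-\aalpha$ even) with diagonal entries $\aalpha!\neq0$, and then obtain $\vec{A}$ by conjugating the moment matrix with the inverse of that map. The only difference is bookkeeping: you remove the $\boldsymbol{\Delta}\vec{x}$-dependence by rescaling to the reference cell $[-\tfrac12,\tfrac12]^n$, whereas the paper extracts it as an explicit diagonal factor $\vec{D}$, $\vec{D}_{\aalpha,\aalpha}=\boldsymbol{\Delta}\vec{x}^{\aalpha+\auno}$, in the factorization $\vec{w}=\vec{U}\,\vec{D}\,\vec{a}$; your $\widehat{\vec{B}}$, $\widehat{\vec{M}}$ and $\vec{A}=\widehat{\vec{B}}^{-\mathsf T}\widehat{\vec{M}}\widehat{\vec{B}}^{-1}$ coincide entry-by-entry with the paper's $\vec{U}$, $\vec{B}$ and $\vec{A}=(\vec{U}^{-1})^{T}\vec{B}\,\vec{U}^{-1}$.
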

\begin{proof}
Direct computation shows that 
$$ (\vec{w}(q))_{\aalpha} 
= \boldsymbol{\Delta} \vec{x}^{{\aalpha}} \int_{\Omega_0} \partial_{\aalpha} q(\vec{x}) \dx 
=  \boldsymbol{\Delta} \vec{x}^{{\aalpha}} 
\sum_{\substack{ \vert \abeta \vert \le M, \\  \abeta \ge \aalpha }} a_{{\abeta }}  \int_{\Omega_0} \partial_{\aalpha} {\vec{x}}^{{\abeta}}  \dx.
$$
The integral terms are nonzero only if $ \abeta-\aalpha $ is even and they are equal to
$$   \int_{\Omega_0} \partial_{\aalpha} {\vec{x}}^{{\abeta}}  \dx = \frac { \abeta !} {(\abeta - \aalpha )!}  \int_{\Omega_0}  {\vec{x}}^{{\abeta} - \aalpha}  \dx  =   \frac {   \abeta ! \, \boldsymbol{\Delta} \vec{x}^{{\abeta - \aalpha + \auno}} }  {2^{\vert \abeta - \aalpha \vert } (\abeta - \aalpha + \auno)!}.$$
Thus
$$ 
(\vec{w}(q))_{{\aalpha}} 
=  \sum_{\substack{ \vert \abeta \vert \le M, \abeta \ge \aalpha, \\ \abeta - \aalpha \; \text{even}}} a_{\abeta } \; \frac { \abeta !}  {(\abeta - \aalpha + \auno)! 
\; 2^{\vert \abeta - \aalpha \vert }} \; \boldsymbol{\Delta} \vec{x}^{{\abeta}+ \auno} $$
or, in a matrix form,
$$ \vec{w}(q) = \vec{U} \, \vec{D} \, \vec{a} $$
where $ \vec{D}$ is a diagonal matrix $\vec{D}_{\aalpha, \aalpha} = \boldsymbol{\Delta} \vec{x}^{{\aalpha} + \auno}$
and $\vec{U}$  is an upper triangular constant matrix  whose elements $ \vec{U}_{\aalpha, \abeta} $ are given by 
\begin{equation} \vec{U}_{\aalpha, \abeta} = \begin{cases} \frac { \displaystyle{\abeta !}}  {\displaystyle{(\abeta - \aalpha + \auno)! 
\; 2^{\vert \abeta - \aalpha \vert } }} & \text{if} \; \abeta - \aalpha \; \text{is even,} \; 
\abeta \ge \aalpha
 \\ 0
& \text{otherwise.} \end{cases}
\label{eq:matrixU} 
\end{equation}
We observe that $\vec{U}_{\aalpha,\aalpha} = \aalpha ! \neq 0,$ so $\vec{U}$ is invertible and
$  \vec{D} \, \vec{a}  = \vec{U}^{-1} \, \vec{w}.$

We obtain
$$ \begin{aligned}  
& \boldsymbol{\Delta} \vec{x}^{\auno} \int_{\Omega_0} (q(\vec{x}))^2 \dx  =  \boldsymbol{\Delta} \vec{x}^{\auno}  \sum_{\vert \aalpha \vert \le M} \sum_{\substack{\vert \abeta \vert \le M \\}} a_{\aalpha} a_{\abeta}  \int_{\Omega_0} \vec{x}^{\aalpha + \abeta } \dx  \\
& = \sum_{\vert \aalpha \vert \le M} \sum_{\substack{ \vert \abeta \vert \le M, \\ \aalpha + \abeta \; \text{even}}} \frac  1 { 2^{\vert \aalpha + \abeta \vert } \Pi (\aalpha + \abeta + \auno)} a_{\aalpha} a_{\abeta} 
\boldsymbol{\Delta} \vec{x}^{{\aalpha + \abeta + \underline{2}}}   \\
& = \sum_{\vert \aalpha \vert \le M} a_{\aalpha} \boldsymbol{\Delta} \vec{x}^{{\aalpha + \auno}} \sum_{\substack{\vert \abeta \vert \le M, \\  \; \aalpha + \abeta \; \text{even}}} \frac 1 { 2^{\vert \aalpha + \abeta \vert } \Pi (\aalpha + \abeta + \auno)}  a_{\abeta} \boldsymbol{\Delta} \vec{x}^{{\abeta + \auno}}  \\
& = \langle \vec{D} \, \vec{a}, \vec{B} \, \vec{D} \,
\vec{a} \rangle = \langle  \vec{w}(q), \vec{A} \,
\vec{w}(q) \rangle 
\end{aligned}  $$
where $ \vec{B}$ is a symmetric constant matrix whose entries are
\begin{equation} \vec{B}_{\aalpha, \abeta} = \begin{cases} \frac { \displaystyle{1}}  {\displaystyle{\Pi (\aalpha + \abeta + \auno) 
\; 2^{\vert \aalpha + \abeta \vert } }} & \text{if} \; \abeta + \aalpha \; \text{is even} \\ 0
& \text{otherwise} \end{cases} \label{eq:matrixB} \end{equation}
and $ \vec{A}= (\vec{U}^{-1})^{T} \, \vec{B} \, \vec{U}^{-1}.$
\end{proof}

Note that $\vec{A}$ can be viewed as a $ (M+1) \times (M+1)$ block matrix by grouping its entries according to
$ \vert \aalpha \vert$. This generalizes the 1D case, where all blocks are $ 1 \times 1.$

\begin{proposition} The smoothness indicator $ I[q]$ of Definition \ref{def:ind} is a bilinear form
$$   
I[q]   =    \langle    \vec{v},  \vec{C} \, \vec{v} \rangle
 $$
 where $\vec{C}$ is a constant, symmetric and semi-positive matrix and $\vec{v}(q) = \vec{w}(q) / {\boldsymbol{\Delta} \vec{x}^{\auno}},$ with $\vec{w}(q)$ defined by \eqref{eq:w}.
 \label{prop:ind}
\end{proposition}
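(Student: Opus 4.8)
The plan is to obtain $I[q]$ from Proposition \ref{prop:exA} by applying that result not to $q$ itself but to each of its partial derivatives $\partial_\abeta q$, and then summing the contributions back together.

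First I would fix a multi-index $\abeta$ with $1\le|\abeta|\le M$ and note that $\partial_\abeta q\in\Poly{M-|\abeta|}_n\subset\Poly{M}_n$. Proposition \ref{prop:exA}, applied to $\partial_\abeta q$, then supplies a constant symmetric matrix $\vec{A}_\abeta$ (the matrix of that proposition for degree $M-|\abeta|$) with
$$\boldsymbol{\Delta}\vec{x}^{\auno}\int_{\Omega_0}(\partial_\abeta q)^2\dx=\langle\vec{w}(\partial_\abeta q),\vec{A}_\abeta\vec{w}(\partial_\abeta q)\rangle.$$
Solving for the integral and substituting into Definition \ref{def:ind} rewrites $I[q]$ as a sum over $\abeta$ of quadratic forms in the vectors $\vec{w}(\partial_\abeta q)$, each carrying the prefactor $\boldsymbol{\Delta}\vec{x}^{2\abeta-\auno}\boldsymbol{\Delta}\vec{x}^{-\auno}=\boldsymbol{\Delta}\vec{x}^{2\abeta-\mi{2}}$.

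The key bookkeeping step is to express $\vec{w}(\partial_\abeta q)$ through $\vec{w}(q)$. Directly from \eqref{eq:w} and $\partial_\aalpha\partial_\abeta=\partial_{\aalpha+\abeta}$ one gets the shift-and-rescale identity
$$(\vec{w}(\partial_\abeta q))_\aalpha=\boldsymbol{\Delta}\vec{x}^{\aalpha}\int_{\Omega_0}\partial_{\aalpha+\abeta}q\,\dx=\boldsymbol{\Delta}\vec{x}^{-\abeta}(\vec{w}(q))_{\aalpha+\abeta},$$
so the components of $\vec{w}(\partial_\abeta q)$ are, up to the scalar $\boldsymbol{\Delta}\vec{x}^{-\abeta}$, the entries of $\vec{w}(q)$ indexed by $\agamma\ge\abeta$. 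Inserting this into each quadratic form produces a factor $\boldsymbol{\Delta}\vec{x}^{-2\abeta}$, which together with the prefactor $\boldsymbol{\Delta}\vec{x}^{2\abeta-\mi{2}}$ leaves the $\abeta$-independent weight $\boldsymbol{\Delta}\vec{x}^{-\mi{2}}=(\boldsymbol{\Delta}\vec{x}^{-\auno})^2$. This is exactly the factor that turns $\vec{w}(q)$ into $\vec{v}(q)=\vec{w}(q)/\boldsymbol{\Delta}\vec{x}^{\auno}$, so after reindexing $\agamma=\aalpha+\abeta$ I obtain $I[q]=\langle\vec{v}(q),\vec{C}\vec{v}(q)\rangle$ with
$$\vec{C}_{\agamma,\agamma'}=\sum_{\substack{|\abeta|\ge1\\\abeta\le\agamma,\ \abeta\le\agamma'}}(\vec{A}_\abeta)_{\agamma-\abeta,\,\agamma'-\abeta}.$$
Because each $\vec{A}_\abeta$ is constant and symmetric, $\vec{C}$ inherits both properties.

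For semi-positivity I would argue indirectly: $I[q]\ge0$ for every $q$, since \eqref{eq:ind} is a sum of integrals of squares multiplied by positive powers of $\boldsymbol{\Delta}\vec{x}$. It then suffices to know that $\vec{v}(q)$ sweeps out the whole space $\R^{\dim\Poly{M}_n}$ as $q$ varies, which is immediate from the factorization $\vec{w}(q)=\vec{U}\vec{D}\vec{a}$ in the proof of Proposition \ref{prop:exA}: both $\vec{U}$ and $\vec{D}$ are invertible, so $\vec{a}\mapsto\vec{v}(q)$ is a bijection. Hence $\langle\vec{v},\vec{C}\vec{v}\rangle\ge0$ for all $\vec{v}$ and $\vec{C}$ is semi-positive. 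The only part needing genuine care is the power-counting in the previous paragraph: one must match the multi-index ranges correctly after the shift $\aalpha\mapsto\aalpha+\abeta$ and check that the powers of $\boldsymbol{\Delta}\vec{x}$ coming from the weight in \eqref{eq:ind}, from the $\boldsymbol{\Delta}\vec{x}^{-\auno}$ of Proposition \ref{prop:exA}, and from the two rescaling factors $\boldsymbol{\Delta}\vec{x}^{-\abeta}$ collapse to the single $\abeta$-independent factor $(\boldsymbol{\Delta}\vec{x}^{-\auno})^2$; everything else is routine.
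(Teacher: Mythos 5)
Your proof is correct and follows essentially the same route as the paper: you apply Proposition \ref{prop:exA} to each derivative $\partial_{\abeta}q$, relate $\vec{w}(\partial_{\abeta}q)$ to $\vec{w}(q)$ by the same shift-and-rescale identity that the paper encodes as the operator $\vec{Q}^{\abeta}$ with $\vec{Q}^{\abeta}\vec{w}(q)=\boldsymbol{\Delta}\vec{x}^{\abeta}\vec{w}(\partial_{\abeta}q)$, and check that the powers of $\boldsymbol{\Delta}\vec{x}$ collapse to the single factor $\boldsymbol{\Delta}\vec{x}^{-\mi{2}}$ converting $\vec{w}$ into $\vec{v}$, so that your entrywise formula for $\vec{C}$ coincides with the paper's $\vec{C}=\sum_{\vert\abeta\vert\ge1}(\vec{Q}^{\abeta})^{T}\vec{A}\,\vec{Q}^{\abeta}$. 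If anything, your semi-positivity argument is slightly more complete than the paper's, which infers it from $I[q]\ge 0$ without spelling out the surjectivity of $q\mapsto\vec{v}(q)$ that you justify via the invertibility of $\vec{U}$ and $\vec{D}$.
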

\begin{proof}
From Proposition \ref{prop:exA}, we have that
\begin{equation}  \boldsymbol{\Delta} \vec{x}^{\auno} \int_{\Omega_0} (\partial_{\abeta} q (\vec{x}))^2 \dx = \langle \vec{w}(\partial_{\abeta} q), \vec{A} \, \vec{w}(\partial_{\abeta} q) \rangle.  \label{eq:intdersquare} \end{equation}
For any $ \abeta$, we consider the shift operator $\vec{Q}^{\abeta}$ such that $ \vec{Q}^{\abeta} \vec{w}(q)= \boldsymbol{\Delta} \vec{x}^{\abeta} \vec{w}(\partial_{\abeta} q).$
We note that the $\aalpha$ component of $ \vec{Q}^{\abeta} \vec{w}(q) $ is given by
$$ 
\begin{aligned}
\boldsymbol{\Delta} \vec{x}^{\abeta} (\vec{w}(\partial_{\abeta} q))_{\aalpha} =
&\boldsymbol{\Delta} \vec{x}^{\aalpha + \abeta} \int_{\Omega_0} \partial_{\aalpha}(\partial_{\abeta} \; q(\vec{x})) \; \dx 
\\
=
&\boldsymbol{\Delta} \vec{x}^{\aalpha + \abeta} \int_{\Omega_0} \partial_{\aalpha+ \abeta}\; q(\vec{x}) \; \dx =
(\vec{w}( q))_{\aalpha + \abeta}, 
\end{aligned}
$$
 and then the entries of  $\vec{Q}^{\abeta}$ are
\begin{equation} \vec{Q}^{\abeta}_{\aalpha, \agamma} = \left \{ \begin{aligned}  & 1 \quad \text{if} \; \agamma= \aalpha + \abeta
 \\ & 0
\quad \text{otherwise}. \end{aligned} \right. \label{eq:matrixQ} \end{equation}
From \eqref{eq:intdersquare} we have
\begin{equation*}
\boldsymbol{\Delta} \vec{x}^{ 2 \abeta + \auno} \int_{\Omega_0} (\partial_{\abeta} q (\vec{x}))^2 \dx = 
\langle \vec{Q}^{\abeta}  \vec{w}( q), \vec{A} \, \vec{Q}^{\abeta} \, \vec{w}( q) \rangle.   
\end{equation*}
Upon introducing the vector $ \vec{v} = \frac {\vec{w}} {\boldsymbol{\Delta} \vec{x}^{\auno}}$, we have
$$ \begin{aligned}  
I[q]  
& = \sum_{\vert \abeta \vert = 1}^M \boldsymbol{\Delta} \vec{x}^{2 \abeta -\auno}  \int_{\Omega_0} (\partial_{\abeta} q (\vec{x}))^2 \dx   
= \sum_{\vert \abeta \vert = 1}^M \boldsymbol{\Delta} \vec{x}^{ -\mi{2}}  \langle   \vec{Q}^{\abeta} \, \vec{w}, \vec{A} \, \vec{Q}^{\abeta} \, \vec{x}\rangle 
\\ & =  \sum^M_{\vert \abeta \vert = 1}   \langle   \vec{Q}^{\abeta} \, \vec{v}, \vec{A} \, \vec{Q}^{\abeta} \, \vec{v} \rangle =    \langle    \vec{v},  \vec{C} \, \vec{v} \rangle,
\end{aligned} $$
where $ \vec{C}=   \sum_{\vert \abeta \vert =1}^G (\vec{Q}^{\abeta})^T \, \vec{A} \, \vec{Q}^{\abeta} $ is the smoothness measuring matrix, which is  constant and symmetric. Moreover $\vec{C}$ is positive semi-definite since, by equation \eqref{eq:ind}, $I[q] \ge 0.$ 
\end{proof}

For some concrete examples of the matrices $\vec{A}, \vec{Q}^{\abeta} $ and $ \vec{C}$, see the Appendix. For an alternative and more sparse representation of matrix $C$ in one space dimension see \cite{BGS:wao}, where a suitable orthogonal basis for the polynomials is introduced.

\begin{proposition} \label{prop:behav_v}
Let $\mathcal{S}$ be a stencil including $\Omega_0$ and let $q(\vec{x})$ be a polynomial  with  $\deg q(\vec{x}) \ge M $ for which the components of the vector $ \vec{v}(q)$  defined in Proposition \ref{prop:ind} satisfy the condition
 \begin{equation}  
  (\vec{v}(q))_{\aalpha}  =  
  \boldsymbol{\Delta} \vec{x}^{{\aalpha- \auno}} \int_{\Omega_j} \partial_{\aalpha} q(\vec{x}) \dx =
  \sum_{\substack{\vert \abeta \vert =0, \\  \; \abeta  \; \text{even}}}^{M- \vert \aalpha \vert} \frac  { \partial_{\aalpha + \abeta} u(\azero)} { 2^{\vert \abeta \vert}(\abeta + \auno)!} \boldsymbol{\Delta} \vec{x}^{{\abeta + \aalpha}} +  \Ogrande( \rho^{M+1} ), 
 \label{eq:cond_v}
 \end{equation}
 for a regular function $u(\vec{x})$ and $ 1 \le \vert \aalpha \vert \le M$, 
  then
 $$ I[q]  = B_M+ R[q]$$
 where $B_M$ depends on $M$ but not on $q(\vec{x})$.
 \end{proposition}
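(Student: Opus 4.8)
The plan is to feed the prescribed asymptotics \eqref{eq:cond_v} of the entries of $\vec{v}(q)$ into the quadratic-form representation $I[q] = \langle \vec{v}(q), \vec{C}\, \vec{v}(q)\rangle$ supplied by Proposition~\ref{prop:ind}, where $\vec{C}$ is a fixed symmetric positive semi-definite matrix. The guiding observation is that the right-hand side of \eqref{eq:cond_v} splits into a principal part assembled purely from the derivatives $\partial_{\aalpha+\abeta} u(\azero)$ of the underlying function and from the cell widths $\boldsymbol{\Delta}\vec{x}$, plus a remainder of size $\Ogrande(\rho^{M+1})$. Since the principal part does not see the particular polynomial $q$ at all --- only $u$, $M$ and the geometry of $\Omega_0$ enter it --- once it is isolated inside the bilinear form it will produce exactly the $q$-independent quantity $B_M$ claimed in the statement, while the remainder feeds a genuinely $q$-dependent term $R[q]$.

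First I would record that only the components $(\vec{v}(q))_{\aalpha}$ with $\vert\aalpha\vert\ge1$ actually influence $I[q]$. Indeed, from the proof of Proposition~\ref{prop:ind} one has $\vec{C}=\sum_{\vert\abeta\vert=1}^{M}(\vec{Q}^{\abeta})^{T}\vec{A}\,\vec{Q}^{\abeta}$, and each shift operator $\vec{Q}^{\abeta}$ of \eqref{eq:matrixQ} moves the $\aalpha$-slot to the $(\aalpha+\abeta)$-slot with $\vert\abeta\vert\ge1$; hence the index $\azero$ is never reached and the (uncontrolled) constant component of $\vec{v}(q)$ drops out. This is precisely why hypothesis \eqref{eq:cond_v} need only be imposed for $1\le\vert\aalpha\vert\le M$, and matching this index range to the range that $\vec{C}$ actually couples is a bookkeeping point I would make explicit.

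Next I would introduce the deterministic vector $\vec{V}$ with components
\[
V_{\aalpha} = \sum_{\substack{\vert \abeta \vert = 0 \\ \abeta \ \text{even}}}^{M - \vert \aalpha \vert} \frac{\partial_{\aalpha + \abeta} u(\azero)}{2^{\vert \abeta \vert}(\abeta + \auno)!}\, \boldsymbol{\Delta} \vec{x}^{\abeta + \aalpha}, \qquad 1 \le \vert \aalpha \vert \le M,
\]
so that \eqref{eq:cond_v} reads $\vec{v}(q) = \vec{V} + \vec{e}$ with $(\vec{e})_{\aalpha} = \Ogrande(\rho^{M+1})$ and with $\vec{V}$ manifestly independent of $q$. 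Expanding the quadratic form and using the symmetry of $\vec{C}$ gives
\[
I[q] = \langle \vec{V}, \vec{C}\, \vec{V} \rangle + 2\langle \vec{V}, \vec{C}\, \vec{e} \rangle + \langle \vec{e}, \vec{C}\, \vec{e} \rangle,
\]
whereupon I would set $B_M := \langle \vec{V}, \vec{C}\, \vec{V} \rangle$ and $R[q] := 2\langle \vec{V}, \vec{C}\, \vec{e} \rangle + \langle \vec{e}, \vec{C}\, \vec{e} \rangle$. By construction $B_M$ depends only on $u$, on $M$ and on $\Omega_0$, as required. To make the decomposition quantitative I would finally track orders: the leading ($\abeta=\azero$) term of $V_{\aalpha}$ is $\partial_{\aalpha} u(\azero)\,\boldsymbol{\Delta}\vec{x}^{\aalpha} = \Ogrande(\rho^{\vert\aalpha\vert})$, so $B_M = \Ogrande(\rho^{2})$; combining $V_{\aalpha}=\Ogrande(\rho^{\vert\aalpha\vert})$ (minimal exponent $1$) with $(\vec{e})_{\agamma}=\Ogrande(\rho^{M+1})$ shows the cross term is $\Ogrande(\rho^{M+2})$, and the purely quadratic term is $\Ogrande(\rho^{2M+2})$, so that $R[q]=\Ogrande(\rho^{M+2})$.

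The main obstacle is conceptual rather than computational: one must argue convincingly that the principal vector $\vec{V}$, and hence $B_M$, is genuinely identical for every admissible $q$. This rests entirely on the fact that \eqref{eq:cond_v} pins down the asymptotics of $\vec{v}(q)$ in terms of $u$ alone, so that two different reconstructing polynomials --- for instance the optimal $\Popt$ and a sub-stencil $P_k$ --- share the same low-order block and differ only through their $\Ogrande(\rho^{M+1})$ remainders $\vec{e}$. I expect the routine expansion of the bilinear form to be straightforward; the content of the argument is the identification of $B_M$ as a $q$-independent object, which is exactly the feature later exploited to design the global indicator $\tau$ for \CWENOZ.
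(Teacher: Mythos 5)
Your proposal is correct and follows essentially the same route as the paper: the paper's proof also splits $\vec{v}(q)$ into the $q$-independent principal vector $\vec{v}^{B_M}$ (set to zero for $\vert\aalpha\vert=0$ and $\vert\aalpha\vert>M$) plus an $\Ogrande(\rho^{M+1})$ remainder $\vec{v}^{R[q]}$, and then expands the quadratic form of Proposition~\ref{prop:ind} to get $B_M=\langle \vec{v}^{B_M},\vec{C}\,\vec{v}^{B_M}\rangle$ and $R[q]=2\langle \vec{v}^{B_M},\vec{C}\,\vec{v}^{R[q]}\rangle+\langle \vec{v}^{R[q]},\vec{C}\,\vec{v}^{R[q]}\rangle$, exactly as you do. Your explicit remark that the $\azero$ component never enters $I[q]$, and your order estimates for $B_M$ and $R[q]$, are sound additions that the paper handles implicitly (the latter in Corollary~\ref{coro:sizeBR}).
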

 \begin{proof}
 We define
 $$ (\vec{v}^{B_M})_{\aalpha} 
 := 
 \left\{ 
 \begin{aligned} 
 & \sum_{\substack{\vert \abeta \vert =0, \\  \; \abeta  \; \text{even}}}^{M- \vert \aalpha \vert} \frac  { \partial_{\aalpha + \abeta} u(\azero)} { 2^{\vert \abeta \vert}(\abeta + \auno)!} \boldsymbol{\Delta} \vec{x}^{{\abeta + \aalpha}}  \quad  & 1 \le \vert \aalpha \vert \le M, \\ 
 & 0 &  \vert \aalpha \vert = 0 \; \text{and} \;\vert \aalpha \vert > M, \end{aligned} \right.
$$
and
$$ 
 (\vec{v}^{R[q]})_{\aalpha} = \Ogrande ( \rho^{M+1} )  $$
 and, as in Corollary 3 in \cite{DB:2013}, using Proposition \ref{prop:ind}, the thesis holds with
 \[ 
 B_M:= \langle \vec{v}^{B_M}, \vec{C} \, \vec{v}^{B_M} \rangle, \qquad R[q]:= 2 \langle \vec{v}^{B_M}, \vec{C} \, \vec{v}^{R[q]} \rangle + \langle
 \vec{v}^{R[q]}, \vec{C} \, \vec{v}^{R[q]} \rangle.
 \]
 \end{proof}
 
The actual size of the term $B_M$ and of the remainder $R[q]$ depends on the presence of a critical point in the data. 
\begin{definition}\label{def:ncp}
A smooth enough function is said to have a 
{\em critical point of order $n_{cp}$} in $\hat{\bf x}$, if 
$ \partial_{\aalpha} u(\hat{\bf x})= 0,$ for every $\vert \aalpha \vert \le k $ and $ \partial_{\aalpha} u(\azero) \neq 0$ for at least one multi-index with $ \vert \aalpha \vert = k +1$. If $n_{cp}=0$, the function will be called {\em regular}.
\end{definition}

\begin{corollary} \label{coro:sizeBR}
Assume that the hypothesis of Proposition \ref{prop:behav_v} hold.
If there is a critical point with 
$n_{cp} \ge M$ we have 
$B_M=0$ and 
$R[q] = \Ogrande(\rho^{2M+2})$.
In the case $n_{cp} < M$, we have 
$B_M= \Ogrande(\rho^{2(n_{cp} +1)}) $ and 
$R[q]= \Ogrande(\rho^{M+2+n_{cp}})$, so that 
$ R[q] = \Opiccolo(B_M)$.
\end{corollary}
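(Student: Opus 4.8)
The plan is to read off both claims from the decomposition $I[q]=B_M+R[q]$ of Proposition~\ref{prop:behav_v}, tracking the order in $\rho$ of each component of the vectors $\vec{v}^{B_M}$ and $\vec{v}^{R[q]}$ introduced there. The basic tool is that, for a fixed mesh aspect ratio, $\boldsymbol{\Delta}\vec{x}^{\aalpha}=\rho^{|\aalpha|}\,\vec{c}^{\aalpha}$ with $\vec{c}$ the (constant, nonzero) vector of direction cosines, so $\boldsymbol{\Delta}\vec{x}^{\aalpha}$ has order exactly $\rho^{|\aalpha|}$, while the derivatives $\partial_{\aalpha+\abeta}u(\azero)$ carry no power of $\rho$. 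Since the summand $\abeta=\azero$ in $(\vec{v}^{B_M})_{\aalpha}$ equals $\partial_{\aalpha}u(\azero)\,\boldsymbol{\Delta}\vec{x}^{\aalpha}$ and every other summand has $|\abeta|\ge2$, the leading contribution to the $\aalpha$-component is $\partial_{\aalpha}u(\azero)\,\boldsymbol{\Delta}\vec{x}^{\aalpha}$ whenever $\partial_{\aalpha}u(\azero)\neq0$.

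First I would dispose of the case $n_{cp}\ge M$. Every derivative appearing in $\vec{v}^{B_M}$ is of the form $\partial_{\aalpha+\abeta}u(\azero)$ with $1\le|\aalpha|\le M$ and $|\aalpha+\abeta|\le M$, hence of order between $1$ and $M$; by Definition~\ref{def:ncp} all of these vanish, so $\vec{v}^{B_M}=\vec{0}$ and therefore $B_M=0$. The cross term in $R[q]$ then also vanishes and $R[q]=\langle\vec{v}^{R[q]},\vec{C}\,\vec{v}^{R[q]}\rangle$; since $\vec{C}$ is constant and every entry of $\vec{v}^{R[q]}$ is $\Ogrande(\rho^{M+1})$, this is $\Ogrande(\rho^{2M+2})$, as claimed.

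For the case $n_{cp}<M$ I would first locate the minimal order in $\vec{v}^{B_M}$: for $|\aalpha|\le n_{cp}$ the vanishing of all derivatives up to order $n_{cp}$ forces $(\vec{v}^{B_M})_{\aalpha}=\Ogrande(\rho^{n_{cp}+1})$; for $|\aalpha|=n_{cp}+1$ the leading term $\partial_{\aalpha}u(\azero)\,\boldsymbol{\Delta}\vec{x}^{\aalpha}$ has order exactly $\rho^{n_{cp}+1}$ for at least one index; and for $|\aalpha|>n_{cp}+1$ the component is of higher order. Writing $\vec{v}^{B_M}=\rho^{n_{cp}+1}\vec{v}^{\mathrm{lead}}+\Opiccolo(\rho^{n_{cp}+1})$ with $\vec{v}^{\mathrm{lead}}$ constant yields $B_M=\rho^{2(n_{cp}+1)}\langle\vec{v}^{\mathrm{lead}},\vec{C}\,\vec{v}^{\mathrm{lead}}\rangle+\Opiccolo(\rho^{2(n_{cp}+1)})$. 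The crucial point, and the part I expect to be the main obstacle, is to show that this leading coefficient does not vanish, i.e.\ that $\vec{v}^{\mathrm{lead}}$ is not killed by $\vec{C}$. For this I would use that $I[q]=\langle\vec{v}(q),\vec{C}\,\vec{v}(q)\rangle$ vanishes exactly when $q$ is constant, and that $q\mapsto\vec{v}(q)$ is a linear isomorphism (Proposition~\ref{prop:exA} gives $\vec{w}=\vec{U}\vec{D}\vec{a}$ with $\vec{U},\vec{D}$ invertible); since $\vec{v}$ of a constant is a multiple of the basis vector $\vec{e}_{\azero}$ attached to $\aalpha=\azero$, the kernel of the positive-semidefinite $\vec{C}$ is exactly $\mathrm{span}\{\vec{e}_{\azero}\}$. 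As $\vec{v}^{\mathrm{lead}}$ has vanishing $\azero$-component (its nonzero entries sit at $|\aalpha|\ge1$) and is nonzero, it lies in the subspace on which $\vec{C}$ is positive definite, so $\langle\vec{v}^{\mathrm{lead}},\vec{C}\,\vec{v}^{\mathrm{lead}}\rangle>0$ and $\theta(B_M)=2(n_{cp}+1)$.

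Finally, to bound the remainder in this case I would estimate the two terms of $R[q]$ separately: the cross term $\langle\vec{v}^{B_M},\vec{C}\,\vec{v}^{R[q]}\rangle$ is $\Ogrande(\rho^{n_{cp}+1})\cdot\Ogrande(\rho^{M+1})=\Ogrande(\rho^{M+n_{cp}+2})$, while the quadratic term $\langle\vec{v}^{R[q]},\vec{C}\,\vec{v}^{R[q]}\rangle$ is $\Ogrande(\rho^{2M+2})$. Since $n_{cp}<M$ gives $2M+2>M+n_{cp}+2$, the cross term dominates and $R[q]=\Ogrande(\rho^{M+n_{cp}+2})$. Comparing exponents, $M+n_{cp}+2>2(n_{cp}+1)$ again because $n_{cp}<M$, so $R[q]=\Opiccolo(B_M)$, completing the proof.
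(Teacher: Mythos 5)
Your argument is correct, and it follows the same route as the paper's own proof: the decomposition $I[q]=B_M+R[q]$ from Proposition~\ref{prop:behav_v}, the same split into the cases $n_{cp}\ge M$ and $n_{cp}<M$, and the same order counting for the components of $\vec{v}^{B_M}$ and $\vec{v}^{R[q]}$. The one genuine difference is that you prove a point the paper only asserts. The paper passes from $\min_{1\le|\aalpha|\le M}\theta\bigl((\vec{v}^{B_M})_{\aalpha}\bigr)=n_{cp}+1$ directly to $\theta(B_M)=2n_{cp}+2$; since $\vec{C}$ is only positive \emph{semi}-definite, this step needs justification, because a priori the quadratic form could annihilate the leading part of $\vec{v}^{B_M}$, in which case $B_M$ would be of higher order and the conclusion $R[q]=\Opiccolo(B_M)$ would fail. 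Your kernel argument — $I[q]=0$ iff $q$ is constant, $q\mapsto\vec{v}(q)$ is a linear bijection by Proposition~\ref{prop:exA}, hence $\ker\vec{C}=\mathrm{span}\{\vec{e}_{\azero}\}$, while $\vec{v}^{\mathrm{lead}}\neq\vec{0}$ has vanishing $\azero$-component and so satisfies $\langle\vec{v}^{\mathrm{lead}},\vec{C}\,\vec{v}^{\mathrm{lead}}\rangle>0$ — supplies exactly the missing lower bound on $B_M$; it is also consistent with the explicit matrices in the Appendix, whose first row and column vanish while the remaining block is positive definite. A further small improvement is that you treat $R[q]$ honestly as an upper bound (the cross term $\Ogrande(\rho^{M+n_{cp}+2})$ dominating the quadratic term $\Ogrande(\rho^{2M+2})$), whereas the paper writes $\theta(R[q])=M+n_{cp}+2$ as if it were an exact order, which need not hold and is not needed. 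The only assumption you add — that the aspect ratio of $\boldsymbol{\Delta}\vec{x}$ stays fixed (or bounded) as $\rho\to0$, so that $\boldsymbol{\Delta}\vec{x}^{\aalpha}\asymp\rho^{|\aalpha|}$ — is implicit in the paper's use of the $\theta$ notation as well, so it is not a deviation in substance.
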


\begin{proof}
Only the derivatives of orders $\vert \aalpha \vert$ to $M$ appear in $(\vec{v}^{B_M})_{\aalpha}.$ 
If all these derivatives are equal to zero since $n_{cp} \ge M$ we have $\vec{v}^{B_M}=\azero$ and $B_M=0$.
Then $ R[q]$ depends only on $ \vec{v}^{R[q]}$, so $R[q] = \Ogrande(\rho^{2M+2})$.
Otherwise, if $n_{cp} < M,$ 
we have $\min_{1 \le \vert \aalpha \vert \le M}\theta((\vec{v}^{B_M})_{\aalpha})= n_{cp} + 1.$
Then $\theta(B_M) =2 n_{cp} +2$  and $\theta(R[q])= M + n_{cp}+2$
so $\theta(R[q]) > \theta(B_M),$ i.e.
 $ R[q] = \Opiccolo(B_M)$.
\end{proof} 

 \begin{remark} \label{rem:P0BM}
 In view of Remark \ref{rem:I0Iopt}, we point out that if $P_1(\vec{x}), P_2(\vec{x}), \cdots, P_m(\vec{x})$ and $\Popt(\vec{x})$ satisfy Proposition \ref{prop:behav_v} with the same $u(\vec{x})$,  then $P_0(\vec{x})$ satisfies 
 the same Proposition since $\vec{w}(q)$, and then $\vec{v}(q)$, are linear in $q$.
 \end{remark}

\begin{example} \label{ex:ind1_1d}
In the one dimensional \CWENO\ and \CWENOZ\ reconstructions of order $3$ we have that the polynomials $P_1(x), P_2(x)\in\Poly{1}_1 $ and
$\Popt(x), P_0(x) \in\Poly{2}_1 $ satisfy the hypothesis of Proposition \ref{prop:behav_v} with $M=1$. We have $B_1= \up^2 \Delta x^2$
and
$$ \begin{aligned}
 I[P_1] &= B_1 - \up \us \Delta x^3 +  \Ogrande(\Delta x^4), \\
 I[P_2] &=  B_1 + \up \us \Delta x^3 +  \Ogrande(\Delta x^4), \\
 I[\Popt] &= B_1 + \Ogrande(\Delta x^4), \\
 I[P_0] &= B_1 + \frac {d_1-d_2} {d_0} \up \us \Delta x^3 +  \Ogrande(\Delta x^4),
 \end{aligned} 
$$ 
and thus $R[P_1], R[P_2],R[P_0] $ are $\Ogrande(\Delta x^3)$ while $R[\Popt]$ is $\Ogrande(\Delta x^4).$
(See also \cite{CS:epsweno})
 \end{example}

\begin{example} \label{ex:ind2_1d}
 In the one dimensional \CWENO\ and \CWENOZ\  reconstructions of order $5$ we have that $P_1(x), P_2(x), P_3(x)\in\Poly{2}_1$ and $\Popt(x)\in\Poly{4}_1 $ satisfy the hypothesis of Proposition \ref{prop:behav_v} with $M=2$. We have 
 $B_2= \up^2 \Delta x^2+ \frac {13} {12} \us^2 \Delta x^4$
 and $R[\Popt]$, $R[P_1]$, $R[P_2]$, $R[P_3]$ are all $\Ogrande(\Delta x^4).$
\end{example}
 
\begin{example} \label{ex:ind_1d}
 In the one dimensional $\mathsf{CWENOZ}$ reconstructions of order $7$ we have that $P_1(x), P_2(x), P_3(x), P_4(x)\in\Poly{3}_1$ and $\Popt(x)\in\Poly{6}_1 $ satisfy the hypothesis of Proposition \ref{prop:behav_v} with $M=3$,
 $B_3= \up^2 \Delta x^2
   +  \left( \frac {13} {12} \us^2 + \frac 1 {12} \up \ut \right) \Delta x^4 
   + \frac {1043} {960} \ut^2 \Delta x^6$
    and $R[P_1], \ldots, R[P_4]$ are  $\Ogrande(\Delta x^5)$ while $R[\Popt]$ is $\Ogrande(\Delta x^6)$ .
\end{example}

In the following, we will consider $q(\vec{x}) \in \{ P_1(\vec{x}), \ldots, P_m(\vec{x}), \Popt(\vec{x}) \}$, so, to simplify the notations,
we write $R_k=R[P_k]$ for $k=1,\ldots,m$ and $R_0=R[\Popt].$

In order to obtain a computationally cheap \CWENOZ\ reconstruction procedure, we follow the idea of \cite{BCCD:2008:wenoz5} and of the later \WENOZ\ constructions and define the global smoothness indicator $\tau$ as a linear combination of the other smoothness indicators $I_0,\ldots,I_m$.

\begin{definition} \label{def:tau}
The global regularity indicator of a \CWENOZ\ scheme is defined as a linear combination
\begin{equation}   \label{eq:tau}
\tau := \left| \sum_{k=0}^m \lambda_k I_k \right|,
\end{equation}
for some choice of coefficients $\lambda_0,\ldots,\lambda_m$ such that
$\sum_{k=0}^{m}\lambda_k=0$.
\end{definition}

We point out that the idea to consider also the smoothness indicator of $\Popt$ has been also exploited, in the context of \WENO\ schemes, by \cite{MR3341001} to define an improved $\WENOZ$ scheme.

In the next section we will study the optimal choice for the coefficients $\lambda_k$ in \eqref{eq:tau}, but first we prove some general results using only the assumption of Definition \ref{def:tau}.
\begin{remark}\label{rem:tauRk}
From the proposition \ref{prop:behav_v}, we have  $\tau= \left| B_M \sum_{k=0}^m \lambda_k +  \sum_{k=0}^m \lambda_k R_k \right|, $  so,
thanks to the hypotesis $\sum_k\lambda_k=0$,
 $\tau=\Ogrande(\sum_{k=0}^m R_k)$.
\end{remark}

\subsection{Order of accuracy of a \CWENOZ\ scheme} \label{ssec:ord_acc}

A \CWENOZ\ reconstruction is a combination of polynomials, each of which is accurate of order $g+1$ and we are interested in designing the nonlinear weights of the combination such that, for smooth data, the accuracy of the reconstruction polynomial $\Prec^{\text{CWZ}}$ is boosted to the accuracy of $\Popt$, i.e. to $G+1$. In this section we exploit the sufficient condition of Remark \ref{rem:weights} in order to choose the optimal values for the parameters $\elle$ and $\epsilon$ appearing in \eqref{eq:OmegaZ}. 

Let us first point out that, although $\epsilon$ cannot be taken exactly $0$, the smaller it is, the less influence it will have on the accuracy of the reconstruction and on its ability to reach the optimal order of convergence already on coarse meshes. As in \cite{Arandiga:11,DB:2013,Kolb:14,CS:epsweno}, we allow a dependence of $\epsilon$ on the cell size, namely $\epsilon=\rho^{\hat{m}}$, with an exponent $\hat{m}$ that we will choose as large as possible. Regarding the parameter $\elle$, it should be taken as small as possible: high values for $\elle$ enhance the ratios between the indicators of the polynomials and make the reconstruction more dissipative on discontinuous solution, as pointed out in \cite{DB:2013}.
The main result of this section is that a proper choice for the global smoothness indicator $\tau$ will help in having optimal reconstruction order with small $\elle$ and $\epsilon$.

We will make use of the following result, proven in \cite{DB:2013}.
\begin{lemma}[Lemma 6 of \cite{DB:2013}] \label{lemma:thetaEps}
If $\alpha^{Z}_k=d_k\left(1+A\rho^{\gamma}+\Ogrande\left(\rho^{\gamma+1}\right)\right)$ for $k=0,\ldots,m$ with $\gamma >0$ and $A$ independent on $k$, then
$\omega^{\text{CWZ}}_k-d_k=\Ogrande(\rho^{\gamma+1})$ for $k=0,\ldots,m$.
\end{lemma}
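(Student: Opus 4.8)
The plan is to work directly from the definition of the normalized weights $\omega^{\text{CWZ}}_k = \alpha^Z_k / \sum_{i=0}^m \alpha^Z_i$ and exploit the hypothesis that the $\Ogrande(\rho^\gamma)$ term of each $\alpha^Z_k$ is exactly $d_k$ times a factor $(1 + A\rho^\gamma + \Ogrande(\rho^{\gamma+1}))$ in which the coefficient $A$ does \emph{not} depend on $k$. The key algebraic observation is that when the leading perturbation is common to all $k$, it cancels in the ratio, pushing the discrepancy $\omega^{\text{CWZ}}_k - d_k$ down one extra order. First I would compute the denominator: since $\sum_{k=0}^m d_k = 1$,
\[
\sum_{i=0}^m \alpha^Z_i = \sum_{i=0}^m d_i\bigl(1 + A\rho^\gamma + \Ogrande(\rho^{\gamma+1})\bigr)
= 1 + A\rho^\gamma + \Ogrande(\rho^{\gamma+1}).
\]

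Next I would form the difference and factor out $d_k$:
\[
\omega^{\text{CWZ}}_k - d_k
= \frac{\alpha^Z_k - d_k \sum_{i=0}^m \alpha^Z_i}{\sum_{i=0}^m \alpha^Z_i}
= d_k\,\frac{\bigl(1 + A\rho^\gamma + \Ogrande(\rho^{\gamma+1})\bigr) - \bigl(1 + A\rho^\gamma + \Ogrande(\rho^{\gamma+1})\bigr)}{\sum_{i=0}^m \alpha^Z_i}.
\]
The crucial point is that both the bracket coming from $\alpha^Z_k/d_k$ and the bracket coming from the normalized sum share the \emph{identical} leading terms $1 + A\rho^\gamma$, because $A$ is independent of $k$; hence the $\rho^0$ and $\rho^\gamma$ contributions cancel exactly in the numerator, leaving only the two $\Ogrande(\rho^{\gamma+1})$ remainders. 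Thus the numerator is $\Ogrande(\rho^{\gamma+1})$.

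Finally I would observe that the denominator $\sum_{i=0}^m \alpha^Z_i = 1 + \Ogrande(\rho^\gamma)$ is bounded away from zero as $\rho\to 0$ (it tends to $1$), so dividing by it preserves the order and yields $\omega^{\text{CWZ}}_k - d_k = \Ogrande(\rho^{\gamma+1})$ for each $k=0,\ldots,m$, as claimed. I do not expect a genuine obstacle here, as the result is essentially a one-line cancellation; the only subtlety worth stating carefully is the $k$-independence of $A$, since this is precisely what makes the leading perturbation factor out of both numerator contributions and cancel, rather than merely matching orders. It is also worth noting for completeness that the boundedness of the denominator away from zero is what guarantees the estimate holds uniformly in $\rho$ for small $\rho$.
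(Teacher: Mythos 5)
Your proof is correct, and it is essentially the standard normalization--cancellation argument behind Lemma~6 of the cited reference \cite{DB:2013} (the paper itself offers no proof, simply importing the result from there): the $k$-independent factor $1+A\rho^{\gamma}$ cancels exactly in the numerator $\alpha^Z_k - d_k\sum_i\alpha^Z_i = d_k\bigl(R_k - \sum_i d_i R_i\bigr)$, leaving only $\Ogrande(\rho^{\gamma+1})$ remainders, while the denominator tends to $1$. One presentational caveat: in your displayed equation the numerator reads as a difference of two identical expressions (hence literally zero); it would be cleaner to give the two remainders distinct names, e.g.\ $R_k$ and $\sum_i d_i R_i$, as your surrounding text already does implicitly.
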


The accuracy of a $\CWENOZ$ scheme is expressed by the following result.

\begin{theorem}\label{teo:accuracy}
  Assume that  the polynomials $P_1(\vec{x}), \ldots, P_m(\vec{x})$ and $\Popt(\vec{x}) $ in the $ \CWENOZ $ scheme  
  satisfy Proposition \ref{prop:behav_v} for some $M$ and the same $u(x)$
  and 
  that the parameters $\epsilon = C_{\epsilon} \rho^{\hat{m}}$, for some   $C_{\epsilon}\neq 0,$  and $\elle$ in \eqref{eq:OmegaZ} satisfy
  \begin{subequations} \label{eq:teorema}
  \begin{align}
  & \hat{m} \leq 2M+1 \label{eq:teorema:a} \\
  & \elle(2M+2-\hat{m})\geq G-g-1 \label{eq:teorema:b}\\
  &  \elle \left[\left.\theta(\tau)\right|_{n_{cp}=0}
    -\min(\hat{m},2M)
   \right]
  \geq G-g-1. \label{eq:teorema:c}
  \end{align}
  \end{subequations}
  Then, on smooth data, the \CWENOZ\ scheme  achieves the optimal order $G+1$ as 
  $\rho\to0$.
\end{theorem}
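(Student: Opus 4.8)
My plan is to reduce the optimal-order claim to an asymptotic estimate for the single ratio $\tau/(I_k+\epsilon)$. By Remark~\ref{rem:weights} it is enough to show that $\omega^{\text{CWZ}}_k-d_k=\Ogrande(\rho^{G-g})$ for every $k=0,\dots,m$, and by Lemma~\ref{lemma:thetaEps} this follows as soon as I can write $\alpha^Z_k=d_k\bigl(1+A\rho^{\gamma}+\Ogrande(\rho^{\gamma+1})\bigr)$ with an exponent $\gamma\ge G-g-1$ and a coefficient $A$ that does \emph{not} depend on $k$. Comparing with the definition \eqref{eq:OmegaZ}, the non-constant part of $\alpha^Z_k/d_k$ is exactly $T_k:=\bigl(\tau/(I_k+\epsilon)\bigr)^{\elle}$, so the whole argument reduces to (i) checking that the leading coefficient of $T_k$ is independent of $k$, and (ii) bounding $\theta(T_k)=\elle\,\theta\bigl(\tau/(I_k+\epsilon)\bigr)$ from below by $G-g-1$, uniformly over the admissible critical-point orders $n_{cp}$ of Definition~\ref{def:ncp}.

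For step (i) I would expand the denominator by Proposition~\ref{prop:behav_v} as $I_k+\epsilon=B_M+R_k+\epsilon$, noting that $B_M$ is common to all the polynomials while only $R_k$ carries a $k$-dependence, and that $\tau$ is a single global quantity. The leading part of the denominator is then always $k$-independent: it equals $B_M$ when $B_M\neq0$ and $\theta(B_M)\le\hat{m}$, and it equals $\epsilon=C_{\epsilon}\rho^{\hat{m}}$ otherwise. In both situations Corollary~\ref{coro:sizeBR} shows that $R_k$ is of strictly higher order than this leading part; here condition \eqref{eq:teorema:a} is essential, since when $B_M=0$ it forces $\theta(R_k)=2M+2>\hat{m}$, so $R_k$ cannot dominate $\epsilon$. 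Writing $D_0$ for the $k$-independent leading part, I then have $\tau/(I_k+\epsilon)=(\tau/D_0)\bigl(1+\Opiccolo(1)\bigr)$, whence $A$ is $k$-independent and $\theta\bigl(\tau/(I_k+\epsilon)\bigr)=\theta(\tau)-\min(\theta(B_M),\hat{m})$.

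Step (ii) I would organise as a case analysis on $n_{cp}$. If $n_{cp}\ge M$, Corollary~\ref{coro:sizeBR} gives $B_M=0$ and $R_k=\Ogrande(\rho^{2M+2})$, so Remark~\ref{rem:tauRk} yields $\theta(\tau)\ge2M+2$ while the previous step gives $\theta(\tau/(I_k+\epsilon))=\theta(\tau)-\hat{m}$; hence $\gamma=\elle(\theta(\tau)-\hat{m})\ge\elle(2M+2-\hat{m})\ge G-g-1$ by \eqref{eq:teorema:b}, and $\gamma>0$ by \eqref{eq:teorema:a}. If instead $n_{cp}<M$, then $\theta(B_M)=2(n_{cp}+1)\le2M$, so $\min(\theta(B_M),\hat{m})\le\min(2M,\hat{m})$; combining this with the monotonicity statement $\theta(\tau)\ge\left.\theta(\tau)\right|_{n_{cp}=0}$ I would get $\gamma=\elle\bigl(\theta(\tau)-\min(2n_{cp}+2,\hat{m})\bigr)\ge\elle\bigl(\left.\theta(\tau)\right|_{n_{cp}=0}-\min(2M,\hat{m})\bigr)\ge G-g-1$ by \eqref{eq:teorema:c}, the same inequality guaranteeing $\gamma>0$ so that Lemma~\ref{lemma:thetaEps} applies.

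The main obstacle is the monotonicity input used in the second case, namely that the order of $\tau$ in the presence of a critical point is at least its order in the regular case, $\theta(\tau)\ge\left.\theta(\tau)\right|_{n_{cp}=0}$. To establish it I would write the leading term of each $R_k$ through the cross term $2\langle\vec{v}^{B_M},\vec{C}\,\vec{v}^{R[P_k]}\rangle$ appearing in Proposition~\ref{prop:behav_v}, and use that the factor $\vec{v}^{B_M}$ is shared by all $k$, so that the leading term of $\tau=|\sum_k\lambda_k R_k|$ factors as $|\langle\vec{v}^{B_M},\vec{C}\sum_k\lambda_k\vec{v}^{R[P_k]}\rangle|$. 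The delicate point is that when the regular-case order $\left.\theta(\tau)\right|_{n_{cp}=0}$ already exceeds the generic value $M+2$ because of cancellations engineered through $\sum_k\lambda_k=0$ (and the optimal $\lambda_k$ of \S\ref{sec:glob_ind}), one must verify that those cancellations are not destroyed, and the order not lowered, as the lower-order derivatives of $u$ vanish for $n_{cp}>0$; everything else in the proof is elementary order bookkeeping built on Corollary~\ref{coro:sizeBR} and Remark~\ref{rem:tauRk}.
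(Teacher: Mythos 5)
Your proof follows the same route as the paper's: reduction via Remark~\ref{rem:weights} and Lemma~\ref{lemma:thetaEps}, identification of a $k$-independent leading part of the denominator $I_k+\epsilon$, and the case split $n_{cp}\ge M$ versus $n_{cp}<M$ built on Proposition~\ref{prop:behav_v}, Corollary~\ref{coro:sizeBR} and Remark~\ref{rem:tauRk}; the only organizational difference is that you factor out the denominator analysis before the case split, whereas the paper repeats it inside each case. Your treatment of the monotonicity statement $\theta(\tau)\ge\left.\theta(\tau)\right|_{n_{cp}=0}$ also matches the paper's level of rigor: the paper asserts it in one sentence (``the minimum value of $\theta(\tau)$ is attained when $n_{cp}=0$'') without proof, so flagging it and sketching why vanishing derivatives can only kill terms in the expansion of $\sum_k\lambda_k I_k$, never create them, is consistent with the source.

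There is, however, one step that fails as written. In the case $n_{cp}<M$ you justify the positivity $\gamma>0$, which Lemma~\ref{lemma:thetaEps} explicitly requires, by appealing to \eqref{eq:teorema:c}. But \eqref{eq:teorema:c} only yields $\gamma\ge G-g-1$, and $G-g-1=0$ for every third-order scheme in the paper: $G=2$, $g=1$ both in Example~\ref{ex:CW1d} with $r=2$ and in the two-dimensional reconstruction of \S\ref{ssec:tau2d}. So for precisely the reconstructions that are the paper's central examples, your argument leaves open the possibility $\gamma=0$, in which case the lemma is inapplicable and no decay rate for $\omega^Z_k-d_k$ follows. The repair is exactly what the paper does, and it is independent of \eqref{eq:teorema:c}: by Remark~\ref{rem:tauRk} and Corollary~\ref{coro:sizeBR} one has $\theta(\tau)\ge\theta(R_k)= M+n_{cp}+2$, while $\min(\theta(B_M),\hat{m})\le\theta(B_M)=2n_{cp}+2$, hence $\gamma=\elle\left[\theta(\tau)-\min(\theta(B_M),\hat{m})\right]\ge\elle\,(M-n_{cp})\ge\elle\ge1$. (Note that in your other case, $n_{cp}\ge M$, you correctly obtained positivity from \eqref{eq:teorema:a} rather than from \eqref{eq:teorema:b}; the same care is needed here.) With this one-line fix your proof coincides with the paper's.
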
 

To prove that the optimal order  $G+1$ as $ \rho \to 0$ is achieved, it  is sufficient to verify that $\theta(\omega^Z_k -d_k ) \ge G-g$, as seen in the Remark \ref{rem:weights}.
We consider separately the cases $n_{cp}\geq M$ and $n_{cp}<M$.
\begin{proof}[Proof for $n_{cp}\geq M$]
Corollary \ref{coro:sizeBR} ensures that $B_M=0$ and $\theta(R_k)\geq 2M+2$ and, together with Remark \ref{rem:tauRk}, one has that
\[
I_k = C_k\rho^{2M+2}+\Ogrande(\rho^{2M+3})
\quad\text{and}\quad
\tau = C_{\tau}\rho^{2M+2}+\Ogrande(\rho^{2M+3}),
\]
for some, possibly null, constants $C_0,\ldots,C_m,C_{\tau}$ that do not depend on $\rho$.
Then $\alpha^Z_k=d_k\left(1+\gamma_k^\elle\right)$ for
\[
\gamma_k 
= \frac{\tau}{I_k+\epsilon}
=\frac
{C_{\tau}\rho^{2M+2}+\Ogrande(\rho^{2M+3})}
{C_k\rho^{2M+2}+\Ogrande(\rho^{2M+3}) + C_{\epsilon}\rho^{\hat{m}}
}
\,.
\]
For the convergence of the scheme it is, of course, necessary that $\gamma_k\to0$ and thus that $\hat{m}\leq2M+1$. Under this hypothesis,
\[
\gamma_k
\sim\frac{C_{\tau}}{C_{\epsilon}} \rho^{2M+2-\hat{m}}
=C \rho^{2M+2-\hat{m}}
\]
for some, possibly null, constant $C$ that does not depend on $\rho$. Thus
\[
\alpha^Z_k \sim d_k\left(1+C^\elle\rho^{\elle(2M+2-\hat{m})}\right),
\]
and $\elle(2M+2-\hat{m})\geq \elle\geq 1$. This allows to apply Lemma \ref{lemma:thetaEps}, concluding that
\begin{equation} \label{eq:omegaMenD:Mgrande}
\theta(\omega^Z_k-d_k)
\geq \elle(2M+2-\hat{m})+1 \geq G-g
\end{equation}
for $n_{cp}\geq M$ and concludes the proof for this case.
\end{proof}

We now turn to the other case.
\begin{proof}[Proof for $n_{cp}< M$]
In this case
\[
\gamma_k 
= \frac{\tau}{I_k+\epsilon}
= \frac{\tau}{B_M+\epsilon}
  \frac{1}{1+\frac{R_k}{B_M+\epsilon}}
=\frac{\tau}{B_M+\epsilon} (1+o(1))
\,,
\]
provided that $b_k=\frac{R_k}{B_M+\epsilon} \to 0.$
However, this is true for any $\hat{m}\geq0$ since Corollary \ref{coro:sizeBR} states that $B_M\neq0$ and $R_k/B_M\to0$. Then $\gamma_k=C\rho^t$ for a constant $C\neq0$ and 
\[
  t
  =\theta(\tau)-\theta(B_M+\epsilon)
  =\theta(\tau)-\min\left(\theta(B_M),\hat{m}\right)
.\]
Corollary \ref{coro:sizeBR} implies that $\theta(B_M)= 2n_{cp}+2$ and, together with Remark \ref{rem:tauRk}, that $\theta(\tau)\geq \theta(R_k)=M+n_{cp}+2$, so that
$t\geq M-n_{cp}\geq 1$.

Finally, we have
$\alpha^Z_k
 =d_k\left(1+\gamma_k^\elle\right)
 =d_k\left(1+C^\elle\rho^{\elle t}\right)
$
with $\elle t\geq1$ and thus Lemma \ref{lemma:thetaEps}, together with $\theta(B_M)=2n_{cp}+2$, implies that
\begin{equation}\label{eq:omegaMenD:Mpiccolo}
\theta(\omega^Z_k-d_k)
\geq 1+\elle t = 1+ \elle \big[\theta(\tau)-\min\left(2n_{cp}+2,\hat{m}\right)\big].
\end{equation}
Since the minimum  value of  $\theta(\tau)$ is attained when $n_{cp}=0$,  we have that
$$
\theta(\omega^Z_k-d_k)
\geq 
1+\elle \big[\left.\theta(\tau)\right|_{n_{cp}=0}
    -\min(\hat{m},2M)
\big] \ge G-g,
$$
which concludes the proof for this case.
\end{proof}

Note that bounds for $\theta(\omega_k-d_k)$ for each specific choice of $\hat{m}, \elle$ and in the presence of a critical point of order $n_{cp}$ is given by equation \eqref{eq:omegaMenD:Mgrande} if $n_{cp}\geq M$ and by equation \eqref{eq:omegaMenD:Mpiccolo} otherwise. 

\begin{remark} \label{rem:conIP0}
Since $P_0$  satisfies 
the Proposition \ref{prop:behav_v} (see Remark~\ref{rem:P0BM}), Theorem~\ref{teo:accuracy} is also true for $I_0=I[P_0]$ in Definition~\ref{def:CWENO}.
\end{remark}

\begin{remark}
We point out that condition \eqref{eq:teorema:b} is always satisfied if $\elle$ is taken large enough.
If $\theta(\tau) \ge 2M+1 $ the same is true for \eqref{eq:teorema:c} and thus $ \forall \hat{m} \le 2M+1, \exists \, \elle \ge 1 $ such that the \CWENOZ\ is convergent with optimal order. 
On the other hand if $\theta(\tau) \le 2M $, \eqref{eq:teorema:c}  can be satisfied only when $\hat{m} < \theta(\tau)$. In this case $ \exists  \, \elle \ge 1 $ such that the \CWENOZ\ is convergent with optimal order
only for $\hat{m} < \theta(\tau) < 2M+1$.
Moreover, we point out that \eqref{eq:teorema} implies that a larger value of $\theta(\tau)$ allows to use a smaller $\epsilon$ and a smaller power parameter $\elle$.
This allows us to design \CWENOZ\ schemes that outperform their \CWENO\ counterparts.
\end{remark}

\section{Optimal \CWENOZ\ reconstructions} \label{sec:glob_ind}

\subsection{\CWENOZ\ in one spatial dimension} 
\label{ssec:tau1d}
In the $\CWENOZ $ reconstruction of order $2r-1$, we employ the stencils and polynomials defined in Example \ref{ex:CW1d}, as in \cite{CPSV:coolweno}.
In that paper, however, we had considered the same global  smoothness indicator $ \tau_{2r-1}$ that are optimal for \WENOZ\ (see \cite{DB:2013}), which are based on the  $r$ polynomials of degree $r-1$, i.e. $\lambda_0=0$ in \eqref{eq:tau}. Now we allow $\lambda_0\neq0$ and we will denote $\hat{\tau}_{2r-1}$ the optimal definition of the global smoothness indicator for \CWENOZ\ of order $2r-1$.
The following Lemma is a generalization of the Examples~\ref{ex:ind1_1d}, \ref{ex:ind2_1d} and \ref{ex:ind_1d} for an arbitrary polynomial.

\begin{lemma} \label{lemma:1D}  
Let $q(x)$ be a polynomial of degree at least $\gamma$, interpolating a set of consecutive cell averages
of $u(x)$  in the sense of \eqref{eq:ps}, with stencils as in Example \ref{ex:CW1d}.
Then $q(x)$ satisfies the hypothesis of Proposition \eqref{prop:behav_v} for any $M\leq\gamma$.
\end{lemma}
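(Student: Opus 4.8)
We must show that a polynomial $q(x)$ of degree at least $\gamma$, obtained by the constrained least-squares interpolation of consecutive cell averages as in Example~\ref{ex:CW1d}, satisfies the hypothesis of Proposition~\ref{prop:behav_v} for every $M \le \gamma$. That hypothesis is precisely the expansion \eqref{eq:cond_v}: for each multi-index (here just an integer, since $n=1$) $\alpha$ with $1 \le |\alpha| \le M$,
$$
(\vec{v}(q))_{\alpha} = \Delta x^{\alpha-1}\int_{\Omega_0}\partial_\alpha q(x)\,\mathrm{d}x
= \sum_{\substack{\beta = 0\\ \beta\ \mathrm{even}}}^{M-\alpha}\frac{u^{(\alpha+\beta)}(0)}{2^{\beta}(\beta+1)!}\Delta x^{\beta+\alpha} + \Ogrande(\rho^{M+1}).
$$

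**The plan.** The key is that the constraint in \eqref{eq:ps} forces $q$ to match the cell averages of $u$ on its stencil, and a polynomial interpolant on a stencil of consecutive cells reproduces $u$ to high order. The plan is to work directly from the defining properties of $q$ rather than from its explicit coefficients. First I would record that, because the stencils of Example~\ref{ex:CW1d} have width adequate for degree-$\gamma$ reproduction and $q$ exactly interpolates the averages $\ca{u}_i$ on that stencil, the standard interpolation-error estimate gives $q(x)-u(x)=\Ogrande(\rho^{\gamma+1})$ uniformly on $\Omega_0$, and the same order holds for derivatives of $q$ on the cell, i.e.\ $\partial_\alpha q(x)-\partial_\alpha u(x)=\Ogrande(\rho^{\gamma+1-\alpha})$ for $\alpha\le\gamma$. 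Since $M\le\gamma$, all the orders we need are comfortably controlled.

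**Reducing to a Taylor expansion of $u$.** The second and central step is to compute $\Delta x^{\alpha-1}\int_{\Omega_0}\partial_\alpha q\,\mathrm{d}x$. Replacing $\partial_\alpha q$ by $\partial_\alpha u$ costs only $\Ogrande(\rho^{M+1})$ after multiplying by $\Delta x^{\alpha-1}$ and integrating over $\Omega_0$ (whose measure is $\Delta x=\Ogrande(\rho)$), because $\Delta x^{\alpha-1}\cdot\Delta x\cdot\rho^{\gamma+1-\alpha}=\rho^{\gamma+1}=\Ogrande(\rho^{M+1})$. Thus it suffices to expand $\int_{\Omega_0}\partial_\alpha u(x)\,\mathrm{d}x$. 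I would Taylor-expand $\partial_\alpha u$ about $0$ as $\partial_\alpha u(x)=\sum_{\beta\ge0}\frac{u^{(\alpha+\beta)}(0)}{\beta!}x^{\beta}$, integrate term by term over the symmetric interval $\Omega_0$ (so odd powers of $x$ vanish, leaving only even $\beta$), and use $\int_{\Omega_0}x^{\beta}\,\mathrm{d}x=\frac{\Delta x^{\beta+1}}{2^{\beta}(\beta+1)}$ for even $\beta$. After multiplying by $\Delta x^{\alpha-1}$ and truncating the sum at $\beta=M-\alpha$ (the tail being $\Ogrande(\rho^{M+1})$), this reproduces exactly the claimed expression, matching the combinatorial factor $\frac{1}{2^{\beta}(\beta+1)!}$ once one checks $(\beta+1)=\beta!\cdot(\beta+1)!/(\beta!)^2$—more simply, $\frac{1}{\beta!}\cdot\frac{1}{2^{\beta}(\beta+1)}=\frac{1}{2^{\beta}(\beta+1)!}$.

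**The main obstacle.** The only delicate point is the uniformity and bookkeeping of the error terms: one must be sure that the interpolation error bound on $\partial_\alpha q$ holds on the whole cell $\Omega_0$ (not just at nodes) and that, after the weighting by $\Delta x^{\alpha-1}$, every discarded contribution is genuinely $\Ogrande(\rho^{M+1})$. Since $\Delta x$, $\rho$ and all stencil cell widths are comparable on the Cartesian grid, these estimates are uniform and the weighting is harmless; the requirement $M\le\gamma$ is exactly what guarantees the truncated Taylor sum carries enough terms and that the interpolation error dominates nothing we keep. The argument is thus a one-dimensional specialization of the computation already illustrated in Examples~\ref{ex:ind1_1d}--\ref{ex:ind_1d}, now carried out for a general degree-$\gamma$ interpolant and general $M\le\gamma$.
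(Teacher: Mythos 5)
Your proposal is correct and its computational backbone coincides with the paper's proof: both arguments reduce $(\vec{v}(q))_{\alpha}$ to $\Delta x^{\alpha-1}\int_{\Omega_0}\partial_{\alpha}u(x)\,\mathrm{d}x$ up to an $\Ogrande(\Delta x^{\gamma+1})=\Ogrande(\rho^{M+1})$ error, and then Taylor-expand $\partial_{\alpha}u$ about the cell centre, where symmetry kills the odd powers and the even ones yield exactly the coefficients $\tfrac{1}{2^{\beta}(\beta+1)!}$ of \eqref{eq:cond_v}. The one point where you and the paper genuinely differ is the justification of the estimates $\partial_{\alpha}q-\partial_{\alpha}u=\Ogrande(\rho^{\gamma+1-\alpha})$ on $\Omega_0$. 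You invoke them as ``the standard interpolation-error estimate,'' but the standard (Lagrange/Rolle) theory applies to \emph{point-value} interpolation, whereas $q$ matches \emph{cell averages}; the passage from one setting to the other is not automatic and is precisely where the paper does its only technical work. The paper's device is to note that $q=Q'$, where $Q$ is the ordinary point-value interpolant of the primitive $U(x)=\int_{-\infty}^{x}u(t)\,\mathrm{d}t$ at the $\gamma+2$ cell interfaces (interpolating cell averages of $u$ is equivalent to interpolating point values of $U$), so that $|U-Q|=\Ogrande(\Delta x^{\gamma+2})$ and the derivative estimates for $q-u=\partial(Q-U)$ follow from the classical theory. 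Your black-boxed estimate is true and is indeed standard in the finite-volume literature, but its proof \emph{is} the primitive-function argument, so a referee would likely ask you to either cite it explicitly or reproduce the paper's two-line reduction to $U$ and $Q$; with that ingredient supplied, your proof is complete and identical in substance to the paper's.
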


\begin{proof}
We note that $q(x)$  is the derivative of the polynomial $Q(x)$ that interpolates the regular function $U(x)= \int_{-\infty}^x u(t) \, dt $ in the $\gamma+2$ points delimiting $\gamma+1$ consecutive cells. 
Then $\vert U(x) - Q(x) \vert = \Ogrande(\Delta x^{\gamma+2})$ and we have
\begin{align*} 
(\vec{v}(q))_{\alpha}   
&= 
{\Delta x}^{{\alpha-1}} 
\int_{ -\frac {\Delta x} {2}}^{ \frac {\Delta x} {2}} 
\partial_{\alpha} \,q(x) dx 
=  {\Delta x}^{{\alpha-1}} \int_{ -\frac {\Delta x} {2}}^{ \frac {\Delta x} {2}}  \partial_{\alpha+1} \, Q(x) \, dx
\\
&=  
{\Delta x}^{{\alpha-1}}
  \int_{ -\frac {\Delta x} {2}}^{ \frac {\Delta x} {2}} \left( \partial_{\alpha + 1} U(x) + \Ogrande(\Delta x^{\gamma +2 - \alpha -1 }) \right) \, dx  
  \\
&=  
{\Delta x}^{{\alpha-1}} \int_{ -\frac {\Delta x} {2}}^{ \frac {\Delta x} {2}} \partial_{\alpha} u(x) \, dx  + \Ogrande(\Delta x^{\gamma+1}) 
\\  
&= 
\sum_{\substack{\beta =0, \\  \; \beta  \; \text{even}}}^{\gamma-  \alpha } \frac  { \partial_{\alpha + \beta} u(0)} { 2^{ \beta}(\beta + 1)!} {\Delta x}^{{\beta + \alpha}}
  +  \Ogrande(\Delta x^{\gamma+1}).
\end{align*}
where $0 \le \alpha \le \gamma.$
\end{proof} 


From the Remark \ref{rem:tauRk}, we know that $ \tau = \left | \sum_{k=0}^r \lambda_k I_k \right |= \Ogrande(\sum_{k=0}^m R_k)$ whenever 
$ \sum \lambda_k=0$ and any choice of coefficients $\lambda_k$  sum up to $0$, would define a $\tau$ for \CWENOZ\ that generalizes the non-optimal definition of $\tau$ in \cite{CCD:11}. However, by examining closely each case, it is possible to obtain an even smaller $\tau$, as is the case for the optimal definition for \WENOZ\ given in \cite{DB:2013}.

\paragraph{CWENOZ3}
In {\bf{$ \mathsf{CWENOZ3} $}} (see \cite{CS:epsweno}) we have
\begin{align*}
I_1 
&= I[P_1] 
= B_1 
- \up \us \Delta x^3 + \left(\frac 5 {12} \up \ut + \frac 1 4 \us^2\right)\Delta x^4 + \Ogrande(\Delta x^5), 
\\
I_2 
&= I[P_2] 
= B_1 
+ \up \us \Delta x^3 
+ \left(\frac 5 {12} \up \ut + \frac 1 4 \us^2\right)\Delta x^4 + 
\Ogrande(\Delta x^5), 
\\
I_0 
&= 
I[\Popt] 
= B_1 
+ \left(\frac 5 {12} \up \ut + \frac {13} {12} \us^2\right)\Delta x^4 + \Ogrande(\Delta x^5).
\end{align*} 
Thus any set of coefficients such that $\lambda_1=\lambda_2$ and $\lambda_0= - 2 \lambda_1$ cancels all terms up to $\Ogrande(\Delta x^3)$, but it is never possible to cancel all the $\Ogrande(\Delta x^4)$ terms.
So we define
\begin{equation} \label{eq:tau3}
\hat{\tau}_{3}= \vert  t I_{1} +  t I_{2} -2t I_0 \vert,
\end{equation}
and, for any $t \in \mathbb{R}$, $\theta(\hat{\tau}_3)=4$, which allows to employ a smaller $\epsilon$ than the $\tau_3$ used in \cite{CPSV:coolweno}, since
$\theta(\tau_3)=3$.

\begin{remark}
We have explored also the possibility of defining the indicator $I_0=I[P_0]$, as in previous works on \CWENO\ (Remark~\ref{rem:I0Iopt}). We recall that convergence is established by Remark \ref{rem:conIP0}. In this case, assuming symmetry of the linear coefficients, we have $d_0=(1-d_1-d_2)=1-2d_1$ and the indicator $I_0$ depends on $d_0$ as
$$
I_0 = I[P_0] = B_1 + \left(\frac 5 {12} \up \ut + \frac {13} {12 d_0^2} \us^2\right)\Delta x^4 + \Ogrande(\Delta x^5).
 $$
However, it is easy to verify that no values for $\lambda_k$ or $d_0$ permit to improve and obtain $\theta(\hat{\tau}_3)=5$.
\end{remark}

In this case $\theta(\hat{\tau}) \ge 3= 2M+1$ then we obtain the optimal order of convergence $ \forall \hat{m} \le 3$ and in particular \eqref{eq:teorema} are satisfied $ \forall \elle \ge 1$.

\begin{table}
\begin{center}
  \caption{$\tau$, $\max_{k=0,1,2}\{|\omega^Z_k-d_k|\}$ and reconstruction errors for the \CWENOZ3\ reconstruction on a critical point with $n_{cp}=1$.}
  \label{tab:CWZ3:weights}
  \tiny
  \setlength{\tabcolsep}{3pt}
  \begin{tabular}{|r|r@{\hspace{2pt}}r|r@{\hspace{2pt}}r|r@{\hspace{2pt}}r|r@{\hspace{2pt}}r|r@{\hspace{2pt}}r|r@{\hspace{2pt}}r|r@{\hspace{2pt}}r|} 
    \hline
  	& \multicolumn{2}{|c|}{} 
    & \multicolumn{4}{c|}{$\elle=2, \hat{m}=2$} 
  	& \multicolumn{4}{c|}{$\elle=2, \hat{m}=3$} 
  	& \multicolumn{4}{c|}{$\elle=2, \hat{m}=4$} 
  \\
  	\multicolumn{1}{|c|}{$\Delta x$} 
    & \multicolumn{1}{c}{$\tau$}     & \multicolumn{1}{c|}{rate} 
    & \multicolumn{1}{c}{$\omega^Z-d$} & \multicolumn{1}{c|}{rate} 
    & \multicolumn{1}{c}{error}      & \multicolumn{1}{c|}{rate} 
    & \multicolumn{1}{c}{$\omega^Z-d$} & \multicolumn{1}{c|}{rate} 
    & \multicolumn{1}{c}{error}      & \multicolumn{1}{c|}{rate} 
    & \multicolumn{1}{c}{$\omega^Z-d$} & \multicolumn{1}{c|}{rate} 
    & \multicolumn{1}{c}{error}      & \multicolumn{1}{c|}{rate} 
  \\ \hline
5.0e-02 & 1.4e-03 &  --- & 1.7e-02 &  --- & 2.5e-04 &  --- & 4.4e-01 &  --- & 1.8e-03 &  --- & 5.6e-01 &  --- & 2.8e-03 &  --- \\ 
2.5e-02 & 9.1e-05 & 3.99 & 4.8e-04 & 5.11 & 3.9e-05 & 2.66 & 3.4e-01 & 0.38 & 2.9e-04 & 2.63 & 5.6e-01 & 0.01 & 5.8e-04 & 2.26 \\ 
1.3e-02 & 5.7e-06 & 4.00 & 8.7e-06 & 5.79 & 5.0e-06 & 2.97 & 2.1e-01 & 0.70 & 4.1e-05 & 2.83 & 5.6e-01 & 0.00 & 1.3e-04 & 2.15 \\ 
6.3e-03 & 3.6e-07 & 4.00 & 1.4e-07 & 5.95 & 6.3e-07 & 2.99 & 9.2e-02 & 1.18 & 4.2e-06 & 3.27 & 5.6e-01 & 0.00 & 3.1e-05 & 2.08 \\ 
3.1e-03 & 2.2e-08 & 4.00 & 2.2e-09 & 5.99 & 7.9e-08 & 3.00 & 2.7e-02 & 1.76 & 2.8e-07 & 3.94 & 5.6e-01 & 0.00 & 7.4e-06 & 2.04 \\ 
1.6e-03 & 1.4e-09 & 4.00 & 3.5e-11 & 6.00 & 9.8e-09 & 3.00 & 5.6e-03 & 2.28 & 8.2e-09 & 5.06 & 5.6e-01 & 0.00 & 1.8e-06 & 2.02 \\ 
7.8e-04 & 8.7e-11 & 4.00 & 5.4e-13 & 6.00 & 1.2e-09 & 3.00 & 9.0e-04 & 2.64 & 5.1e-10 & 4.03 & 5.6e-01 & 0.00 & 4.5e-07 & 2.01 \\ 
3.9e-04 & 5.4e-12 & 4.00 & 8.5e-15 & 6.00 & 1.5e-10 & 3.00 & 1.3e-04 & 2.83 & 1.3e-10 & 1.98 & 5.6e-01 & 0.00 & 1.1e-07 & 2.01 \\ 
1.9e-04 & 3.4e-13 & 4.00 & 1.3e-16 & 6.00 & 1.9e-11 & 3.00 & 1.7e-05 & 2.92 & 1.8e-11 & 2.80 & 5.6e-01 & 0.00 & 2.8e-08 & 2.00 \\ 
9.8e-05 & 2.1e-14 & 4.00 & 2.1e-18 & 6.00 & 2.4e-12 & 3.00 & 2.2e-06 & 2.96 & 2.4e-12 & 2.95 & 5.6e-01 & 0.00 & 7.1e-09 & 2.00 \\ 
4.9e-05 & 1.3e-15 & 4.00 & 3.2e-20 & 6.00 & 3.0e-13 & 3.00 & 2.7e-07 & 2.98 & 3.0e-13 & 2.99 & 5.6e-01 & 0.00 & 1.8e-09 & 2.00 \\ 
2.4e-05 & 8.3e-17 & 4.00 & 5.1e-22 & 6.00 & 3.7e-14 & 3.00 & 3.4e-08 & 2.99 & 3.7e-14 & 3.00 & 5.6e-01 & -0.00 & 4.4e-10 & 2.00 \\ 
1.2e-05 & 5.2e-18 & 4.00 & 7.9e-24 & 6.00 & 4.7e-15 & 3.00 & 4.3e-09 & 3.00 & 4.7e-15 & 3.00 & 5.6e-01 & -0.00 & 1.1e-10 & 2.00 \\ 
6.1e-06 & 3.2e-19 & 4.00 & 1.2e-25 & 6.00 & 5.9e-16 & 3.00 & 5.4e-10 & 3.00 & 5.9e-16 & 3.00 & 5.6e-01 & -0.00 & 2.7e-11 & 2.00 \\ 
  \hline
  \end{tabular}
\end{center}
\end{table}

We have conducted a thorough study to check the conditions given by Theorem~\ref{teo:accuracy}. We report here in Table~\ref{tab:CWZ3:weights}, as an example, only the most difficult case, i.e. the case of a critical point with $n_{cp}=1$.

A grid of size $\Delta x$ was set up with a cell centre in the critical point of the function $\sin(\pi x-\sin(\pi x)/\pi)$ located at about $x_{\text{crit}}\simeq0.596683186911209$, see \cite{HAP:2005:mappedWENO}. The cell averages of this cell and of its immediate neighbours were initialized with the 2-point gaussian quadrature rule. Next, the \CWENOZ3\ reconstruction was computed for the cell containing the critical point. Due to the very fine grids employed, all computations were performed in quadruple precision.

In the table we show the case of $\elle=2$ and compare three values for $\hat{m}$. 
First, $\tau$ decays proportionally to $\Delta x^4$, as indicated above. 
When $\hat{m}=2$, the rate of decay of the distance between the nonlinear weights and the optimal ones is $6$ and the reconstruction error is of order $3$ already on very coarse grids.
When $\hat{m}=3$, the rate of decay of the nonlinear weights approaches $3$ very slowly and the reconstruction error is of order $3$ only on very fine grids.
When $\hat{m}=4$, the nonlinear weights do not tend to the linear ones and the reconstruction error is of order $2$.
All the finding are in agreement with the results of Theorem~\ref{teo:accuracy}.

\paragraph{CWENOZ5}
For $ \mathsf{CWENOZ5}$  we have (see \cite{CPSV:cweno})
\begin{align*}
I_1 
=  
I[P_1] 
&= 
B_2 - \frac 7 {12} \up \ut \Delta x^4
+\left( \frac {1} {2} \up \uq - \frac {13} 6 \us \ut \right) \Delta x^5
\\ 
& + \left(  - \frac {749} {2880} \up \uc + \frac {65} {48} \us \uq + \frac {673} {576} \ut^2 \right) \Delta x^6 +\Ogrande(\Delta x^7), 
\\
I_2 = I[P_2] &= 
B_2 + \frac 5 {12} \up \ut \Delta x^4 
\\
&+\left(   \frac {91} {2880} \up \uc + \frac {13} {48} \us \uq + \frac {25} {576} \ut^2 \right) \Delta x^6 +\Ogrande(\Delta x^7), 
\\
I_3 =I[P_3] 
&= B_2 - \frac 7 {12} \up \ut \Delta x^4 
 + \left(- \frac {1} {2} \up \uq + \frac {13} 6 \us \ut \right) \Delta x^5   
\\ 
& + \left(  - \frac {749} {2880} \up \uc + \frac {65} {48} \us \uq + \frac {673} {576} \ut^2 \right) \Delta x^6 +\Ogrande(\Delta x^7), 
\\
I_0 = I[\Popt] 
&= B_2 + \frac 1 {12} \up \ut \Delta x^4 
\\ & + 
 \left(  - \frac {21} {320} \up \uc + \frac {7} {80} \us \uq + \frac {1043} {960} \ut^2 \right) \Delta x^6 +\Ogrande(\Delta x^7).
\end{align*}
We can thus obtain $\theta(\hat{\tau}_5)=6$ with any choice
\begin{equation} \label{eq:tau5}
\hat{\tau}_{5}= \vert  t I_1 +  4t I_2 + t I_3 -6t I_0 \vert, \qquad t \in \mathbb{R}
\end{equation}
and no value of $t$ can lead to an improvement. In any case, this is better than the situation for \WENOZ\ and for the \CWENOZ\ of \cite{CPSV:coolweno}, where one has only $\theta(\tau_5)=5$.

Also in this case we have explored the choice $I_0=I[P_0]$, (see Remark \ref{rem:conIP0}), leaving free the values of the linear coefficients, but this has not lead to any improvement.

In this case $\theta(\hat{\tau}) \ge 5= 2M+1$ then we obtain the optimal order of convergence $ \forall \hat{m} \le 5$ and in particular \eqref{eq:teorema} are satisfied $ \forall \elle \ge 1$.

\paragraph{CWENOZ of orders higher than 5}
Proceding in a similar manner, we obtain for $ \mathsf{CWENOZ7}  $, $ \mathsf{CWENOZ9}$, $ \mathsf{CWENOZ11}$ respectively
%
\begin{center}
\begin{tabular}{ll}
$\hat{\tau_{7}} = \vert  -t I_1  -3  t I_2+ 3tI_3 + t I_4  \vert
$
&
$\theta(\hat{\tau}_7)=7$
\\
$\hat{\tau_{9}} = \vert  t I_1 +(2t+u) I_2+ (3u-6t)I_3 + (2t+u) I_4+ t I_5  -5u I_0 \vert
$
&
$  \theta(\hat{\tau}_9)=8$
\\
$\begin{aligned}
\hat{\tau}_{11}  = \vert  &(u-s) I_1 +(37u-t) I_2 + (10s+118u-2t) I_{3}+ \\
 &+ (2t-10s+54u ) I_4+ t I_5  +s I_6-210 u I_0 \vert 
\end{aligned}
$
&
$\theta(\hat{\tau}_{11})=9$ ,
\end{tabular}
\end{center}
where the parameters $t,s,u$ can take any real values.
In these last cases we have obtained a $\hat\tau$ of the same order as the $\tau$ in \cite{CPSV:coolweno} and, the global smoothness indicators of \cite{CPSV:coolweno} correspond to the choice $t=1$ in 
$\hat{\tau}_{7}, \, 
 u=0,\, t=1 $ in $\hat{\tau}_{9} $ and $ u=0,\,  t=1,\,  s=t$ in $\hat{\tau}_{11}. $

In the case of $ \mathsf{CWENOZ7}  $ again, the optimal order of convergence is possible for $ \hat{m} \le 7$, but \eqref{eq:teorema} requires  $  \elle \ge 2$ if $\hat{m} =7$.    Condition \eqref{eq:teorema:c} can be satisfied only for $ \hat{m} \le 7$ for $ \mathsf{CWENOZ9} $ and for $\hat{m} \le 8$ in the $ \mathsf{CWENOZ11} $ case.

\begin{table}
\begin{center}
\caption{Upper bound for the sensitivity order $\hat{m}$  
in 1D \CWENOZ\ reconstructions of Example \ref{ex:CW1d}.}
\label{tab:epsilon}
\footnotesize
\begin{tabular}{|rrr|cccccc|}
\hline
$r$ & 2$r$-1 & $\theta(\hat{\tau}_{2r-1})$ & $\elle=1$ & $\elle=2$ & $\elle=3$ & $\elle=4$ & $\elle=5$ & $\elle=6$  \\
 \hline
2 &   3 &   4    & 3 &   3    &  3   &   3     &  3 &  3   \\
3 &   5 &   6    & 5 &   5    &  5   &   5     &  5 &  5   \\
4 &   7 &   7   & 6 &   7    &  7   &   7     &  7 &  7   \\
5 &   9 &   8    & 5 &   6    &  7   &   7     &  7 &  7   \\
6 &   11 &   9    & 5 &   7    &  7   &   8     &  8 &  8   \\
\hline
\end{tabular}
\end{center}
\end{table}

We summarize the result of this section in Table \ref{tab:epsilon}, where we report  the possible choices for the order $\hat{m}$ of the sensitivity parameter $ \epsilon $ for various choices of $\elle$ in \eqref{eq:OmegaZ}.
 
\subsection{CWENOZ3 in two spatial dimension} 
\label{ssec:tau2d}
We now consider the third order reconstruction of \cite{SCR:CWENOquadtree}, restricted to the Cartesian uniform grid case.
The stencil for the reconstruction is the patch of $3\times3$ cells centered in the cell in which the reconstruction is sought.
The optimal polynomial $\Popt^{(2)} (\vec{x})$ is the polynomial of degree 2, defined on $\eta=9$ cells as in \eqref{eq:ps}.
Additionally we consider $m=4$  polynomials of degree 1 that fit 4 cell averages in a sector of the main stencil. In particular, we denote as $P_{NE}(\vec{x})\equiv P_1^{(1)}(\vec{x})$ the polynomial of degree 1 that interpolates exactly the central cell average and, in a least-square sense, the 3 cells located on the north, east and north-east directions; $P_{SE}(\vec{x})\equiv P_2^{(1)}(\vec{x}) , P_{SW}(\vec{x}) \equiv P_3^{(1)}(\vec{x}), P_{NW}(\vec{x}) \equiv P_4^{(1)}(\vec{x})$ the analogous polynomials in the south-east, south-west or north-west sub-stencils respectively.
The expression for all the polynomials involved is given in 
\cite{CWENOandaluz} and is reported here for convenience.

As in \cite{SCR:CWENOquadtree}, we employ a polynomial basis such that all non-constant basis elements have zero mean in the central cell and considering $\boldsymbol{\Delta} \vec{x} = (h,k), $
we have:
$$ 
\begin{array}  {lll}
 \varphi_{(0,0)}(\vec{x}) = 1   & \varphi_{(1,0)}(\vec{x})  = x &
 \varphi_{(0,1)}(\vec{x})  = y   \\ \varphi_{(2,0)}(\vec{x})  = x^2 - \frac {h^2} {12} \qquad &
 \varphi_{(0,2)}(\vec{x})  = y^2 - \frac {k^2} {12} \qquad & 
 \varphi_{(1,1)}(\vec{x})  = xy.
\end{array}
$$
In this basis, we have
\begin{equation*}
\Popt^{(2)} (\vec{x}) = \overline{u}_0 + \sum_{\vert \abeta \vert =1}^2  c_{\abeta}^{(2)} \varphi_{\abeta}(\vec{x}) 
\;\text{ and }\;
P_{j}^{(1)} (\vec{x})  = \overline{u}_0 +  \sum_{\vert \abeta \vert =1} c_{\abeta}^{j} \varphi_{\abeta}(\vec{x}), 
\;  j=1, \cdots, 4
\end{equation*}  
with coefficients computed in \cite{CWENOandaluz}.
This example satisfies the hypothesis of Proposition \ref{prop:behav_v} with $1 \le \vert \aalpha \vert \le M=1$.
In fact, using the vertical bar to denote the evaluation point, the Taylor expansion of the cell averages of $u(\vec{x})$ is

\newcommand{\atzerosmall}{\raisebox{-.5em}{\rule{.5pt}{1.25em}}_{\azero}}
\newcommand{\atzero}{\raisebox{-.75em}{\rule{.5pt}{1.75em}}_{\azero}}

\begin{equation} \label{eq:cellavg2d} 
\begin{aligned}
 {\overline{u}}_{NW} 
 & = u\atzerosmall - h  u_x\atzerosmall +k  u_y\atzerosmall 
   + \frac {13} {24}
  h^2  u_{xx}\atzerosmall -hk  u_{xy}\atzerosmall 
  +  \frac {13} {24}   k^2 u_{yy}\atzerosmall + \Ogrande(\rho^3)\\
 {\overline{u}}_{N} & = u\atzerosmall  +k  u_y\atzerosmall + \frac {1} {24}
  h^2  u_{xx}\atzerosmall +  \frac {13} {24}
   k^2  u_{yy}\atzerosmall + \Ogrande(\rho^3) \\
 {\overline{u}}_{NE} & = u\atzerosmall + h  u_x\atzerosmall +k  u_y\atzerosmall + \frac {13} {24}
  h^2  u_{xx}\atzerosmall +hk  u_{xy}\atzerosmall 
  +  \frac {13} {24} k^2  u_{yy}\atzerosmall + \Ogrande(\rho^3)\\
   {\overline{u}}_{W} & = u\atzerosmall - h  u_x\atzerosmall  + \frac {13} {24}
    h^2  u_{xx}\atzerosmall  +  \frac {1} {24}
     k^2  u_{yy}\atzerosmall + \Ogrande(\rho^3)\\
 {\overline{u}}_0 & = u\atzerosmall  + \frac {1} {24}
  h^2  u_{xx}\atzerosmall  +  \frac {1} {24}
   k^2  u_{yy}\atzerosmall + \Ogrande(\rho^3)\\
 {\overline{u}}_{E} & =  u\atzerosmall + h  u_x\atzerosmall + \frac {13} {24}
  h^2  u_{xx}\atzerosmall  +  \frac {1} {24}
   k^2  u_{yy}\atzerosmall +\Ogrande(\rho^3)\\ 
 {\overline{u}}_{SW} 
 & = u\atzerosmall - h  u_x\atzerosmall -k  u_y\atzerosmall + \frac {13} {24}
  h^2  u_{xx}\atzerosmall +hk  u_{xy}\atzerosmall 
  +  \frac {13} {24}
   k^2  u_{yy}\atzerosmall + \Ogrande(\rho^3) \\
 {\overline{u}}_{S} 
 & = u\atzerosmall -k  u_y\atzerosmall + \frac {1} {24}
  h^2  u_{xx}\atzerosmall  +  \frac {13} {24}
   k^2  u_{yy}\atzerosmall + \Ogrande(\rho^3)\\
 {\overline{u}}_{SE} 
 & = u\atzerosmall + h  u_x\atzerosmall -k  u_y\atzerosmall + \frac {13} {24}
  h^2  u_{xx}\atzerosmall -hk  u_{xy}\atzerosmall 
  +  \frac {13} {24}
   k^2 u_{yy}\atzerosmall + \Ogrande(\rho^3)
   \end{aligned}
\end{equation}
Substituting the above expressions in the coefficients given in \cite{CWENOandaluz},  for $  \vert \aalpha \vert =1$,  we obtain
$$ 
 c_{\aalpha}^{(2)} 
 = \frac 1 {\aalpha !} \partial_{\aalpha} u\atzero  + \Ogrande(\rho^2),    
 \text{ and }
 c_{\aalpha}^r 
 =  \frac 1 {\aalpha !} \partial_{\aalpha}  u\atzero  + \Ogrande(\rho), \qquad r=1, \cdots,4.
$$
We consider $q(\vec{x}) \in \{ P_{1}^{(1)} (\vec{x}), \cdots ,P_{4}^{(1)} (\vec{x}), \Popt^{(2)}(\vec{x}) \}.$
The basis functions $ \varphi_{\abeta}(\vec{x})$ differ from the monomial basis only for constant terms and we have, for $  \vert \aalpha \vert =1$,
\begin{align*} 
 (\vec{v}(q))_{\aalpha} 
 & = \boldsymbol{\Delta} \vec{x}^{{\aalpha -\auno }} \int_{\Omega_0} \partial_{\aalpha} q (\vec{x}) \dx =  \boldsymbol{\Delta} \vec{x}^{{\aalpha}-\auno} 
 \sum_{\vert \abeta \vert =1}^{\gamma} 
 c_{\abeta} \int_{\Omega_0} \partial_{\aalpha} \varphi_{\abeta}(\vec{x}) \dx 
 \\ 
 & 
 =  \sum_{\substack{ \vert \abeta \vert \le \gamma, \abeta \ge \aalpha, \\ \abeta - \aalpha \; \text{even}}} 
 c_{\abeta } \; 
 \frac { \abeta !}  {(\abeta - \aalpha + \auno)! 
 \; 2^{\vert \abeta - \aalpha \vert }} \; 
 \boldsymbol{\Delta} \vec{x}^{{\abeta}+ \auno}  
 =
 \aalpha! \, c_{\aalpha}  \boldsymbol{\Delta} \vec{x}^{{\aalpha}} 
 \\ 
 & 
 = \partial_{\aalpha} u\atzero \boldsymbol{\Delta} \vec{x}^{{\aalpha}}
  + \Ogrande(\rho^2), 
 \end{align*}
 with  $\gamma=1 $ if $c_{\abeta} = c_{\abeta}^r, \, r=1, \cdots, 4, \; 
 \gamma=2 $ if $c_{\abeta} = c_{\abeta}^{(2)}. $ 

In order to simplify the notations, we consider square cells, i.e. $h=k$. From Proposition \ref{prop:ind}, see also the Appendix, 
using the expansions \eqref{eq:cellavg2d}, we have

\begin{align*}
I_{NE} & = B_1 + \big( 
u_x\atzerosmall 
u_{xx}\atzerosmall 
+ \tfrac 2 3  u_x\atzerosmall 
u_{xy}\atzerosmall(\azero)  
+ \tfrac 2 3  u_y\atzerosmall 
u_{xy}\atzerosmall 
+ u_y\atzerosmall 
u_{yy}\atzerosmall 
\big)
 h^3 
+ \Ogrande(\rho^4) 
\\
I_{NW}  & = B_1 + 
\big( 
u_x\atzerosmall 
u_{xx}\atzerosmall 
+ \tfrac 2 3  u_x\atzerosmall 
u_{xy}\atzerosmall 
- \tfrac 2 3  u_y\atzerosmall 
u_{xy}\atzerosmall 
+ u_y\atzerosmall 
u_{yy}\atzerosmall 
\big)
h^3 
+ \Ogrande(\rho^4) 
\\
I_{SE}& 
= B_1+ 
\big( 
u_x\atzerosmall 
u_{xx}\atzerosmall 
-\tfrac 23  
u_x\atzerosmall 
u_{xy}\atzerosmall 
+ \tfrac 2 3   
u_y\atzerosmall 
u_{xy}\atzerosmall 
-  u_y\atzerosmall 
u_{yy}\atzerosmall 
\big)
h^3 
+ \Ogrande(\rho^4) 
 \\
I_{SW}& 
= B_1 
+ \big(
-  u_x\atzerosmall 
u_{xx}\atzerosmall 
- \tfrac 2 3  u_x\atzerosmall 
u_{xy}\atzerosmall 
- \tfrac 2 3  
u_y\atzerosmall 
u_{xy}\atzerosmall 
-  u_y\atzerosmall 
u_{yy}\atzerosmall 
\big)
h^3 
+ \Ogrande(\rho^4) 
\\
I_{0} & = B_1 + \Ogrande(\rho^4),
\end{align*}
where $ B_1= (  u^2_x\atzerosmall +  u^2_y\atzerosmall) h^2.$
With easy computation,
we obtain $\theta(\hat{\tau}_3)=4$ with
\begin{equation} \label{eq:tau32d}
\hat{\tau}_{3}= \vert  t I_{NE}^{(1)} + (u-t) I_{NW}^{(1)}+ (u-t) I_{SE}^{(1)}+ tI_{SW}^{(1)} -2u I_{0}^{(2)}  \vert, \qquad \forall t, u \in \mathbb{R}.
\end{equation}
No choices for the coefficients can delete the 11 coefficients of order $\rho^4$.

Finally, we point out that also in 2D we obtain, for the possible choice of $\epsilon$, the same results as in $\mathsf{CWENOZ3}$ in 1D and reported in Table \ref{tab:epsilon}.

\section{Numerical experiments} \label{sec:numerics}

Here we present several numerical tests in order to assess the performance of the schemes proposed in this work. In particular we concentrate on the one-dimensional reconstructions \CWENOZ3 and \CWENOZ5 and on the two-dimensional \CWENOZ3, which are those for which a novel definition of $\hat{\tau}$ has been proposed.

First, in Section~\ref{ssec:recaccuracy} we test the accuracy and non-oscillatory properties of the novel \CWENOZ\ reconstructions.
Next, in Section~\ref{ssec:test1d} we consider one-dimensional test problems and we compare the $\CWENO$ reconstruction of \cite{CPSV:cweno} with the $\CWENOZ$ schemes.
Finally, in Section~\ref{ssec:test2d} we consider two-dimensional test problems based on the system of Euler equations for gas dynamics.

For the numerical solution we apply schemes of order up to $5$: we employ the Local Lax-Friedrichs flux with spatial reconstructions of order $3$ and $5$ and the classical third order strong stability preserving Runge-Kutta scheme with three stages~\cite{Gottlieb:2001} and the fifth order Runge-Kutta scheme with six stages~\cite[\S3.2.5]{Butcher:2008} for the time integration. All the simulations are run with a CFL of $0.45$ and the Local Lax-Friedrichs flux was chosen for simplicity.
All the \CWENO\ and \CWENOZ\ reconstructions employ a central optimal weight $d_0=\tfrac34$ and set the $d_k$ for $k\geq1$ as in  \cite{CPSV:cweno}.

\subsection{Accuracy of the reconstructions}\label{ssec:recaccuracy}
For the accuracy tests we consider the following functions and critical points:
\begin{center}
\begin{tabular}{cll}
$n_{cp}$ & \multicolumn{1}{c}{function} & $x_{\text{crit}}$ \\
0 & $u_0(x)=e^{-x^2}$ & $0.2$\\
1 & $u_1(x)=\sin(\pi x -\sin(\pi x)/\pi)$ & $0.596683186911209$\\
2 & $u_2(x)=1.0+\sin^3(\pi x) $ & $0.0$\\
3 & $u_3(x)=\cos^4(\pi x)$ & $0.2$\\
\end{tabular}
\end{center}
We compute the reconstruction polynomial for the cell containing the critical point, with cell averages initialized with a gaussian quadrature rule of higher order than the expected order of accuracy. For these tests, quadruple precision has been used. 

\begin{figure}
\centering
\includegraphics[width=0.49\linewidth]{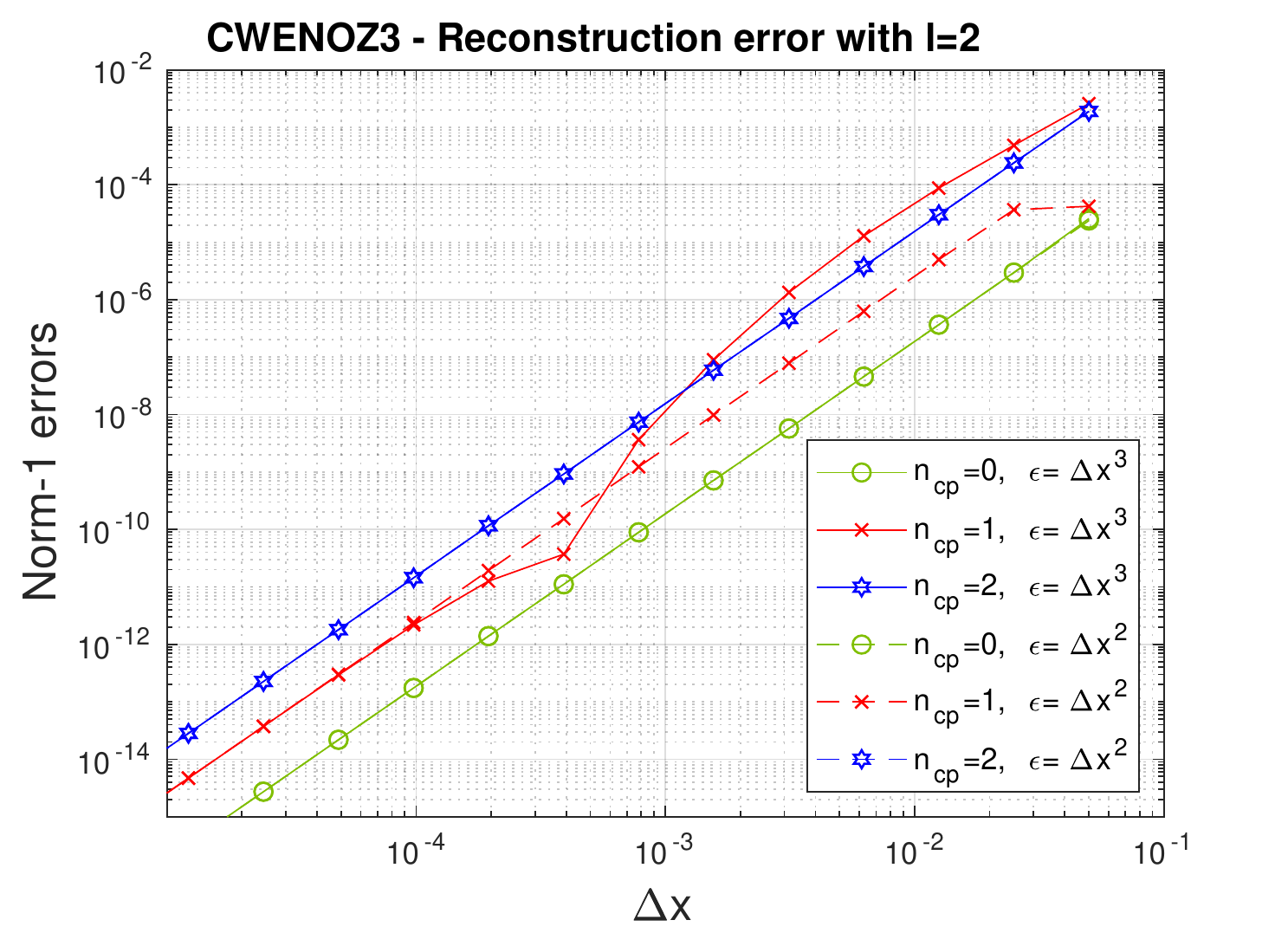}
\includegraphics[width=0.49\linewidth]{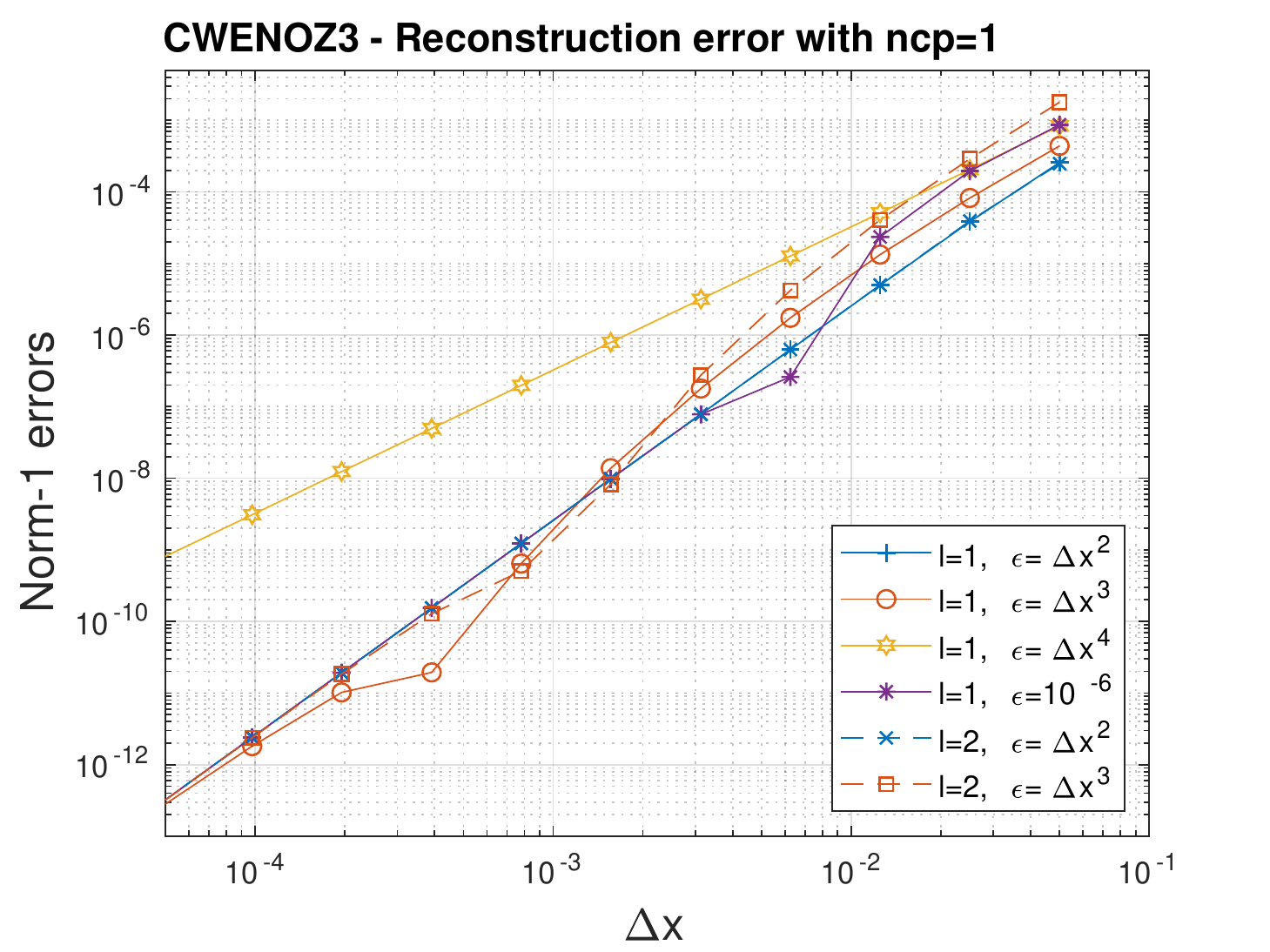}
\\
\includegraphics[width=\linewidth]{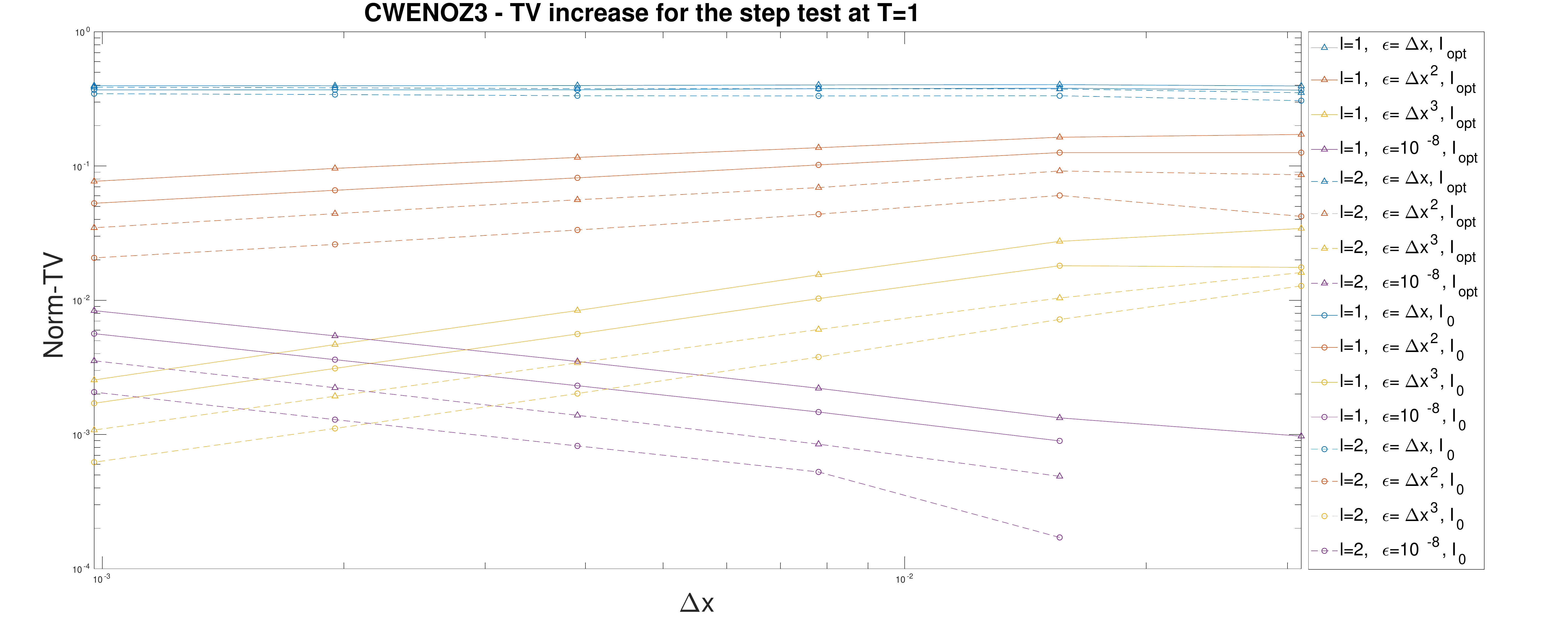}
\caption{Results with the \CWENOZ3\ reconstruction. 
Top left: reconstruction error in a cell with $n_{cp}=0,1,2$, using $\elle=1$. 
Top right: reconstruction error in a cell with $n_{cp}=1$.
Bottom: total variation increase for the linear transport of a step (missing data if a diminution occurred).}
\label{fig:CWZ3:recerr}
\end{figure}

\paragraph{\CWENOZ3}
In Figure~\ref{fig:CWZ3:recerr} we report on numerical experiments with the \CWENOZ3\ reconstruction. 
The top-left panel compares the reconstruction errors in a cell containing a critical point of order from $0$ to $2$, using $\elle=2$ and $I_0=I[\Popt]$. The result indicate that the most difficult situation is the presence of a critical point of order 1, which is further investigated in the top-right panel. For $\epsilon=\Delta x^4$ the reconstruction error does not decay with the correct order, showing that the bound of Theorem \ref{teo:accuracy} is sharp. Also, for $\epsilon=\Delta x^3$, which is just within the bounds  of Theorem \ref{teo:accuracy}, order 3 is indeed reached, but only for very small grids. These results are in agreement with those of Table~\ref{tab:CWZ3:weights}.
A similar irregular convergence history is observed when a fixed value of $\epsilon=10^{-6}$ is employed. Finally, using $\elle=1$ instead of $\elle=2$ yields lower reconstruction errors (top-right) and, on the contrary, defining $I_0=I[P_0]$ yields slightly larger errors.

The bottom panel of Figure~\ref{fig:CWZ3:recerr} analyses the discontinuous case. A double step, namely $u(x,0)=\chi_{[\nicefrac14,\nicefrac34]}(x)$, was evolved with $u_t+u_x=0$ in the domain $[0,1]$ with periodic boundary conditions until $t=1$. The increase in total variation at final time was studied, as it is a measure of the spurious oscillations produced by the numerical scheme. Using a fixed value for $\epsilon$ yields a diminution of the total variation on very coarse grids (missing data in the plot), but an increase on smaller ones, leading to a non-TVB scheme asymptotically; the transition between the regimes happens at a grid size depending on the chosen value for $\epsilon$. All choices $\hat{m}=1,2,3$ lead to TVB schemes, with $\hat{m}=2,3$ also guaranteeing a diminution of the increase of the total variation when the grid size is reduced. Here the parameter $\elle$ and the choice of $I_0$ act oppositely to the case of regular data: a smaller total variation error is obtained for larger values of $\elle$ and using $I_0=I[P_0]$ instead of $I_0=I[\Popt]$. 

Summarizing, it is unfortunately hard to indicate a parameter set that will perform optimally in all circumstances: $\hat{m}=2$ to have good convergence rates on coarse grids for smooth data, together with $\elle=2$ and $I_0=I[P_0]$ to better control the total variation on discontinuous ones seems the best overall choice, but better results can surely be obtained by fine-tuning the reconstruction parameters in specific situations.

\begin{figure}
\centering
\includegraphics[width=0.49\linewidth]{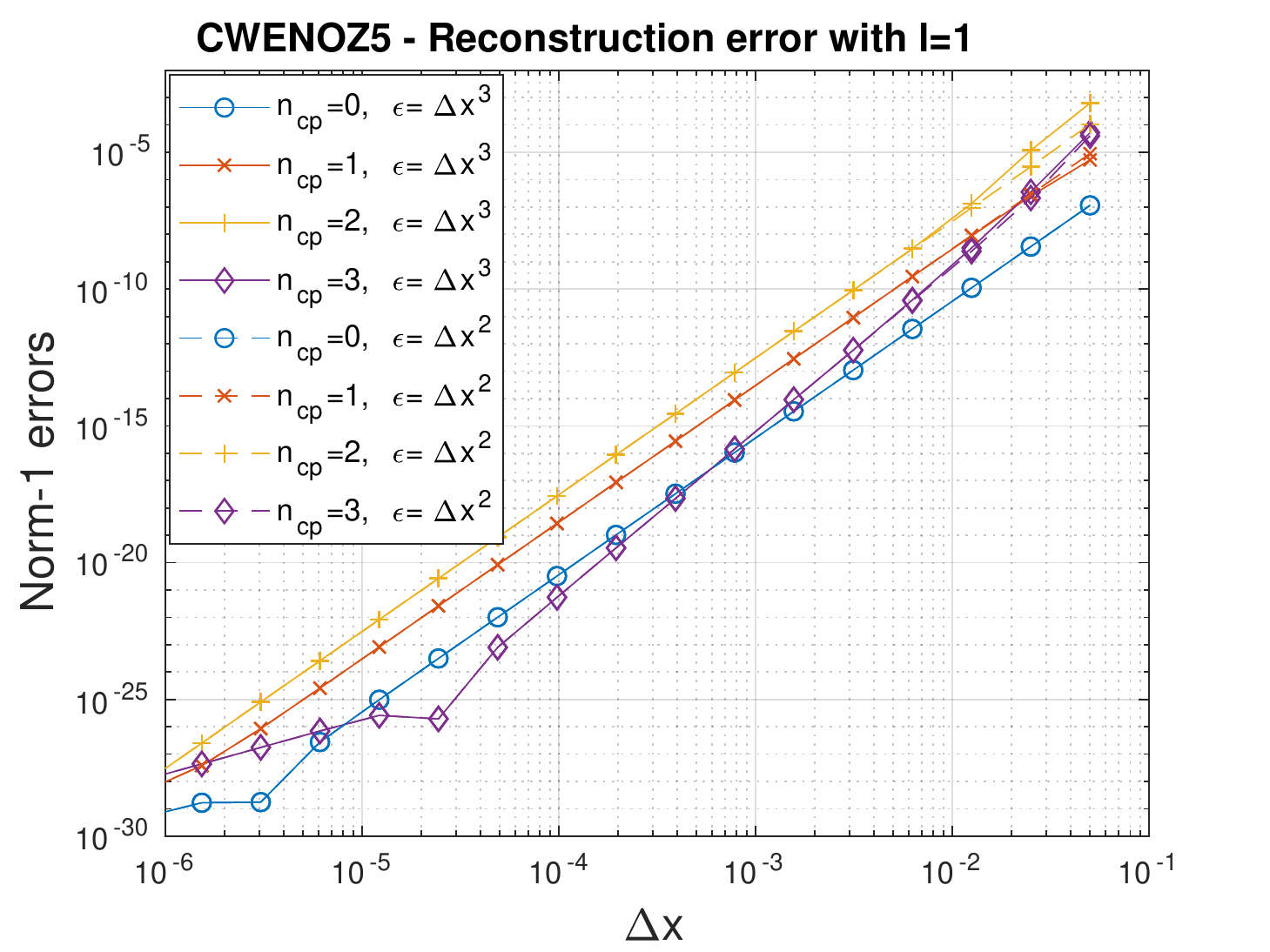}
\includegraphics[width=0.49\linewidth]{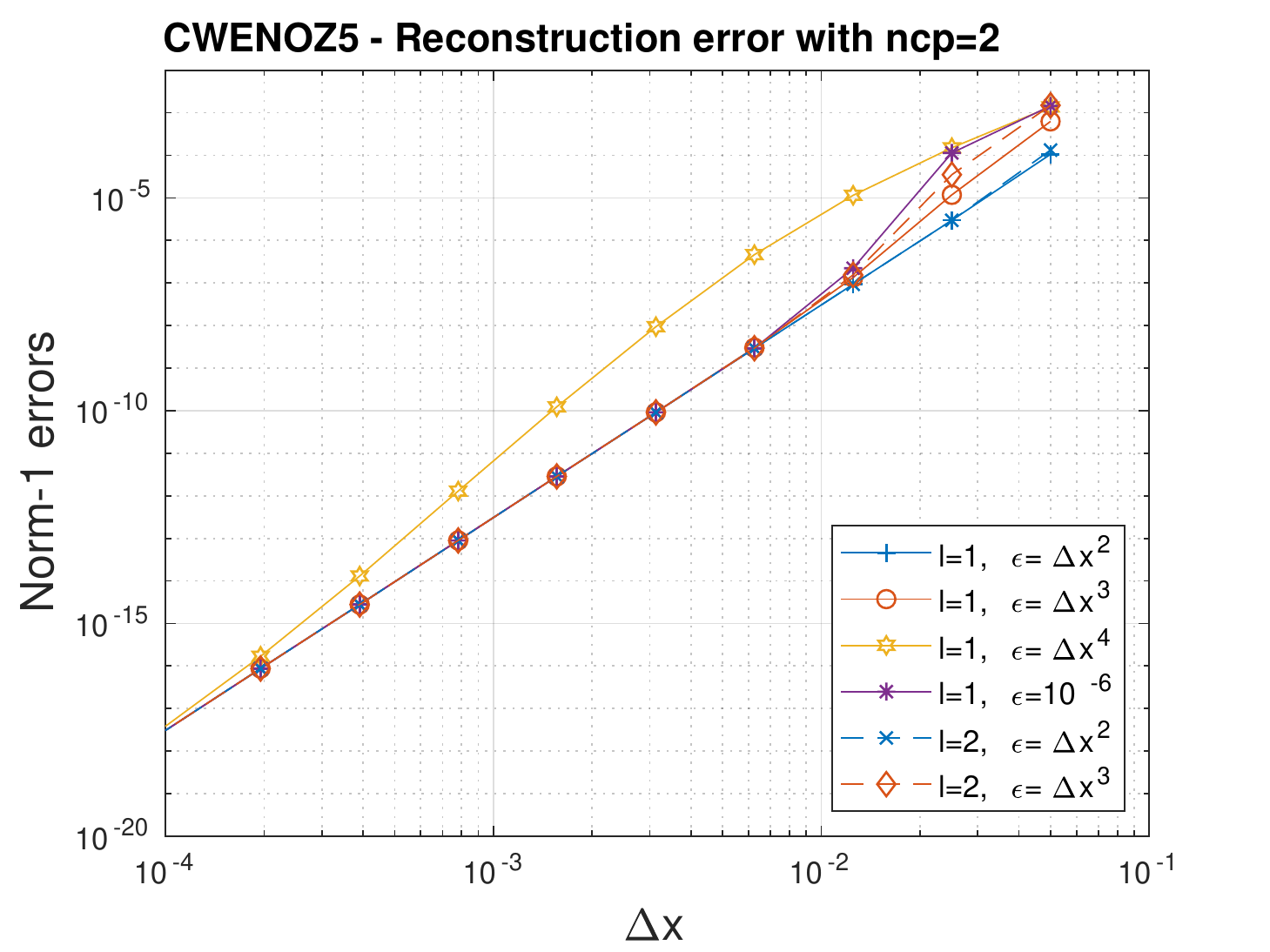}
\\
\includegraphics[width=\linewidth]{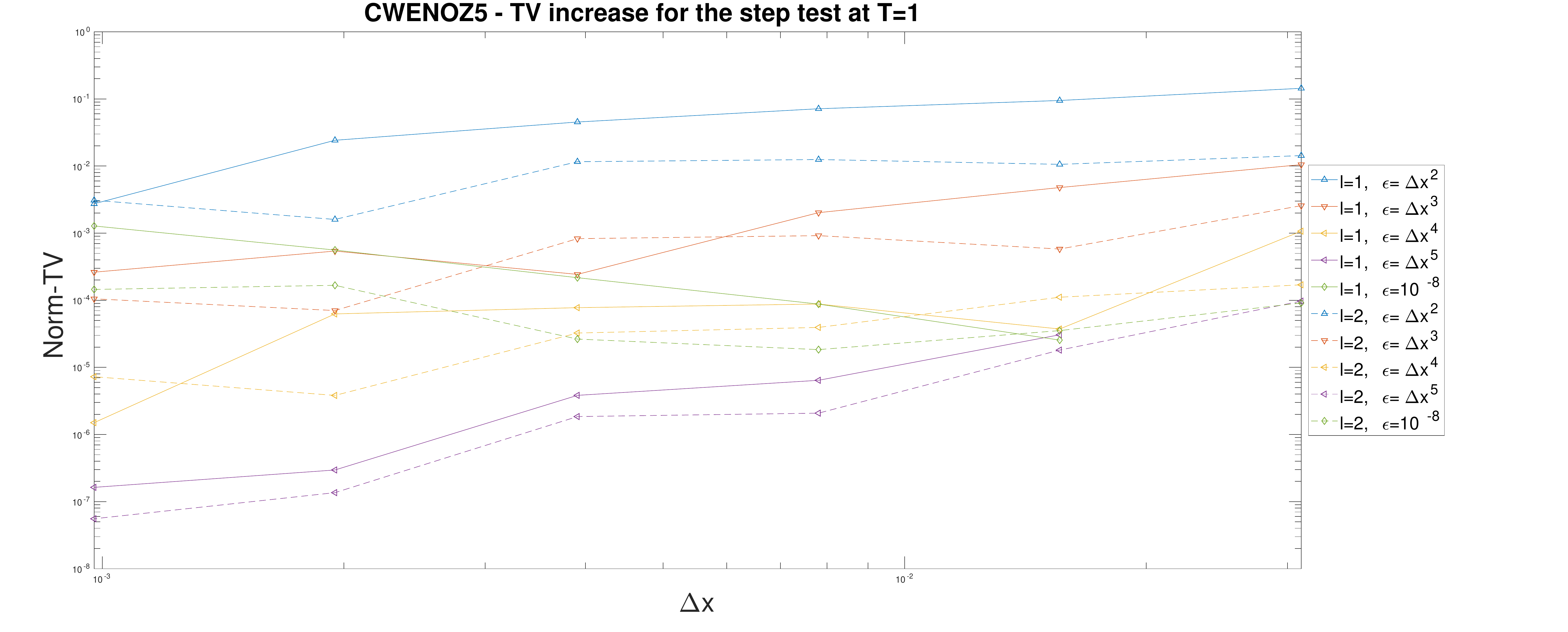}
\caption{Results with the \CWENOZ5\ reconstruction.
Top left: reconstruction error in a cell with $n_{cp}=0,1,2,3$. 
Top right: reconstruction error in a cell with $n_{cp}=2$.
Bottom: total variation increase for the linear transport of a step (missing data if a diminution occurred).}
\label{fig:CWZ5:recerr}
\end{figure}

\paragraph{\CWENOZ5}
In Figure~\ref{fig:CWZ5:recerr} we report on numerical experiments with the \CWENOZ5\ reconstruction. 
The top-left panel compares the errors in a cell containing a  critical point of order from $0$ to $3$ and 
the  top-right one investigates more carefully the more difficult situation, which is $n_{cp}=2$. Apart from the choice $\epsilon=\Delta x^5$, which is just within the bounds  of Theorem \ref{teo:accuracy}, order 5 is reached already on very coarse grids. Also the fixed choice $\epsilon=10^{-6}$ leads to uneven convergence rates on coarse grids. In general, the parameter $\elle$ does not influence significantly the errors.

The bottom panel analyses the discontinuous case.
Similarly to the case of \CWENOZ3, a fixed value of $\epsilon$ leads to an asymptotically non-TVB scheme. Also, the larger is $\hat{m}$, the lower is the total variation error and the higher is the rate at which the total variation increase is reduced when refining the grid. As for \CWENOZ3, using $\elle=1$ reduce the spurious oscillations (bottom panel) and the same happens defining $I_0=I[P_0]$ (not shown).
However, due to the much smaller absolute values of the total variation increase (compare the vertical scale with Figure \ref{fig:CWZ3:recerr}), we expect that spurious oscillations will be very small with any parameter set for \CWENOZ5.

Summarizing, we suggest to employ $\hat{m}=3$ or $\hat{m}=4$ for $\CWENOZ5$. Since $\elle$ does not play a major role, we suggest to take $\elle=1$, which is computationally cheaper.

\begin{figure} 
	\centering
\includegraphics[width=0.49\linewidth]{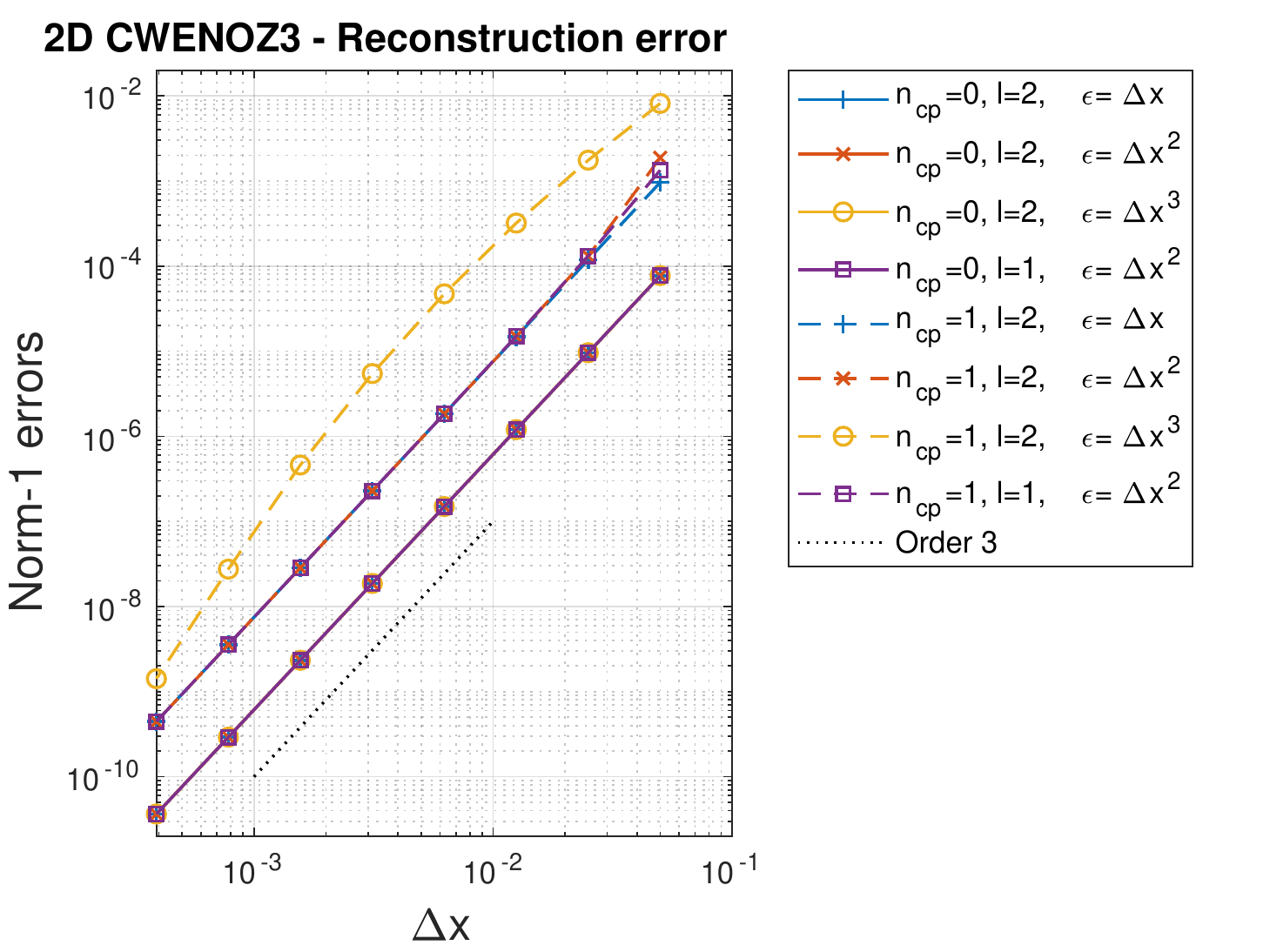}
\caption{Reconstruction errors for the \CWENOZ3\ reconstruction in 2D.}
\label{fig:CWZ32D:recerr}
\end{figure}

\paragraph{\CWENOZ3\ in 2D}
Figure~\ref{fig:CWZ32D:recerr} shows the behaviour of the \CWENOZ3\ reconstruction in two space dimensions. In particular, in the left panel one can see that the parameters $\elle$ and $\hat{m}$ do not influence significantly the performance of \CWENOZ3\ in a region with $n_{cp}=0$, but that the performance is reduced on coarse grids for $\hat{m}=3$ and $n_{cp}=1$. The situation is quite similar to the one-dimensional case. The right panel shows that the reconstruction errors are smaller for \CWENOZ3\ than \CWENO3.
In the sequel, $\elle=2$ and $\hat{m}=2$ have been used in all tests.

\subsection{Conservation laws in one space dimension} \label{ssec:test1d}
In order to distinguish them, here we name $\CWENOZDB$ the schemes using, as in~\cite{CPSV:coolweno}, the weights of Borges et al.~\cite{BCCD:2008:wenoz5} and simply $\CWENOZ$ the schemes using the new improved weights introduced in Section~\ref{ssec:tau1d} and namely $\hat\tau_3$ of equation \eqref{eq:tau3} and $\hat\tau_5$ of equation \eqref{eq:tau5}, with $t=1$.

Here, the value of $\epsilon$ is chosen as $\epsilon\approx \Delta x^{\hat{m}}$ for all the schemes.
In view of the convergence analysis of the nonlinear weights in Section~\ref{ssec:tau1d} and of the results of Section~\ref{ssec:recaccuracy}, we consider the following choices.

For $\CWENOZ3$ we show results for  $\hat{m}=2,3$, $\elle=1,2$ and $I_0=I[\Popt]$ or $I_0=I[P_0]$, showing that the best parameter set depends on the problem setting. Unless otherwise stated, the results are shown for $I_0=I[\Popt]$.
For the $\CWENOZ5$ scheme%
, we have tested $\hat{m}=3,4$, $\elle=1,2$ and both choices of $I_0$, but the schemes are almost insensitive to the parameters. Unless otherwise stated, the results are shown for $\hat{m}=3$, $\elle=1$ and $I_0=I[\Popt]$.
Finally, for $\CWENO$ and $\CWENOZDB$ schemes, we report the results with the settings of the papers  \cite{CPSV:coolweno,CPSV:cweno,SCR:CWENOquadtree}, namely  $\hat{m}=2$, $\elle=2$ and $I_0=I[P_0]$.

\paragraph{Linear transport of smooth data.}
We solve the linear scalar conservation law
\begin{equation} \label{eq:linadv}
	u_t+u_x=0
\end{equation}
on the periodic domain $x\in[-0.5,0.5]$ and up to final time $T=1$. As initial condition we consider the low frequency sinusoidal profile
\begin{equation} \label{eq:lfDatum}
	u_0(x) = \sin\left( 2\pi x \right)
\end{equation}
and the high frequency profile
\begin{equation} \label{eq:hfDatum}
	u_0(x) = \sin(2 \pi x) + \sin(30 \pi x)\exp(-80 x^2).
\end{equation}
The goal of this test is to numerically verify the convergence properties of the schemes.


\begin{table}
	\begin{center}
		\caption{The accuracy for the linear transport test of the low frequency smooth data~\eqref{eq:lfDatum} with schemes of order $3$ and $5$.}
		\label{tab:lfErrors}
		\tiny
		\setlength{\tabcolsep}{3pt}
		\begin{tabular}{|r|rr|rr|rr|rr|rr|rr|} \hline
			& \multicolumn{2}{c|}{$\CWENO 3$} & \multicolumn{2}{c|}{$\CWENOZDB 3$} & \multicolumn{2}{c|}{$\CWENOZ 3$}
			& \multicolumn{2}{c|}{$\CWENO 5$} & \multicolumn{2}{c|}{$\CWENOZDB 5$} & \multicolumn{2}{c|}{$\CWENOZ 5$}
			\\
			& & & & & \multicolumn{2}{c|}{($l=2$, $\hat{m}=2$)} & & & & & \multicolumn{2}{c|}{($l=1$, $\hat{m}=3$)}
			\\
			cells & error & rate & error & rate & error & rate
			& error & rate & error & rate & error & rate
			\\
			\hline
			50 & 6.49e-03 & - & 1.42e-03 & - & 7.44e-04 & - 
			
			& 6.41e-06 & - & 2.08e-06 & - & 2.08e-06 & -\\
			
			100 & 7.87e-04 & 3.04 & 1.00e-04 & 3.82 & 8.64e-05 & 3.10
			
			&2.02e-07 & 4.99 & 6.52e-08 & 4.99 & 6.52e-08 & 5.00\\
			
			200 & 9.46e-05 & 3.06 & 1.08e-05 & 3.21 & 1.08e-05 & 3.00 
			
			& 6.32e-09 & 4.99 & 2.04e-09 & 5.00 & 2.04e-09 & 5.00\\
			
			400 & 1.13e-05 & 3.07 & 1.35e-06 & 3.00 & 1.35e-06 & 3.00
			
			& 1.97e-10 & 5.00 & 6.37e-11 & 5.00 & 6.37e-11 & 5.00\\
			
			800 & 1.38e-06 & 3.03 & 1.69e-07 & 3.00 & 1.69e-07 & 3.00
			
			& 6.17e-12 & 5.00 & 1.99e-12 & 5.00 & 1.99e-12 & 5.00\\
			
			1600 & 1.71e-07 & 3.01 & 2.11e-08 & 3.00 & 2.11e-08 & 3.00
			
			& 2.29e-13 & 4.75 & 1.40e-13 & 3.83 & 1.40e-13 & 3.83\\ \hline
		\end{tabular}
	\end{center}
\end{table}

For the low frequency datum~\eqref{eq:lfDatum} the $\CWENOZ$ scheme and the $\CWENOZDB$ scheme have comparable errors and they are always more accurate than the $\CWENO$ scheme. For the $\CWENOZ$ scheme of order $3$ we do not see particular advantages in using different combinations of $\hat{m}=2,3$ and $\elle=1,2$.
The best choice is given by $\elle=2$ and $\hat{m}=2$, for which we observe smaller errors than $\CWENOZDB$ on coarser grids. In Table~\ref{tab:lfErrors} we report the $1$-norm errors and the convergence rates as functions of increasing number of cells for the schemes of order $3$ and $5$.

Table~\ref{tab:hfErrors} reports the data for the linear transport of the high frequency datum~\eqref{eq:hfDatum}. 
Also in this case the errors of the $\CWENOZ5$ are not strongly influenced by the choice of the parameters $\elle$ and $\hat{m}$. Moreover, the $\CWENOZ5$ scheme and the $\CWENOZDB5$ scheme provide the same results showing improvements with respect to $\CWENO5$. 
On the other hand,$\CWENOZ3$ is more sensible to the choice of the parameters $\elle$ and $\hat{m}$ as seen in Figure~\ref{fig:hfConvergence}, where we compare $\elle=1,2$ and $\hat{m}=2,3$. 
We observe that, on this smooth problem, taking $\elle=2$ and $\hat{m}=2$ helps the scheme to reach the theoretical order of convergence also on coarser grids, according to analysis and tests in Section~\ref{ssec:tau1d} and Section~\ref{ssec:recaccuracy}. All the methods reach the expected theoretical order of convergence, but we point out that the accuracy of the $\CWENOZ3$ scheme with this optimal choice of the parameters (in particular for the $\elle=1$ and $\hat{m}=2$) is more than half an order of magnitude better than the accuracy of $\CWENOZDB3$ and $\CWENO3$. This also demonstrates that the new improved weights computed in Section~\ref{ssec:tau1d} for one dimensional reconstructions improve the quality of the solution.

\begin{figure} 
	\centering
	\includegraphics[width=0.7\textwidth]{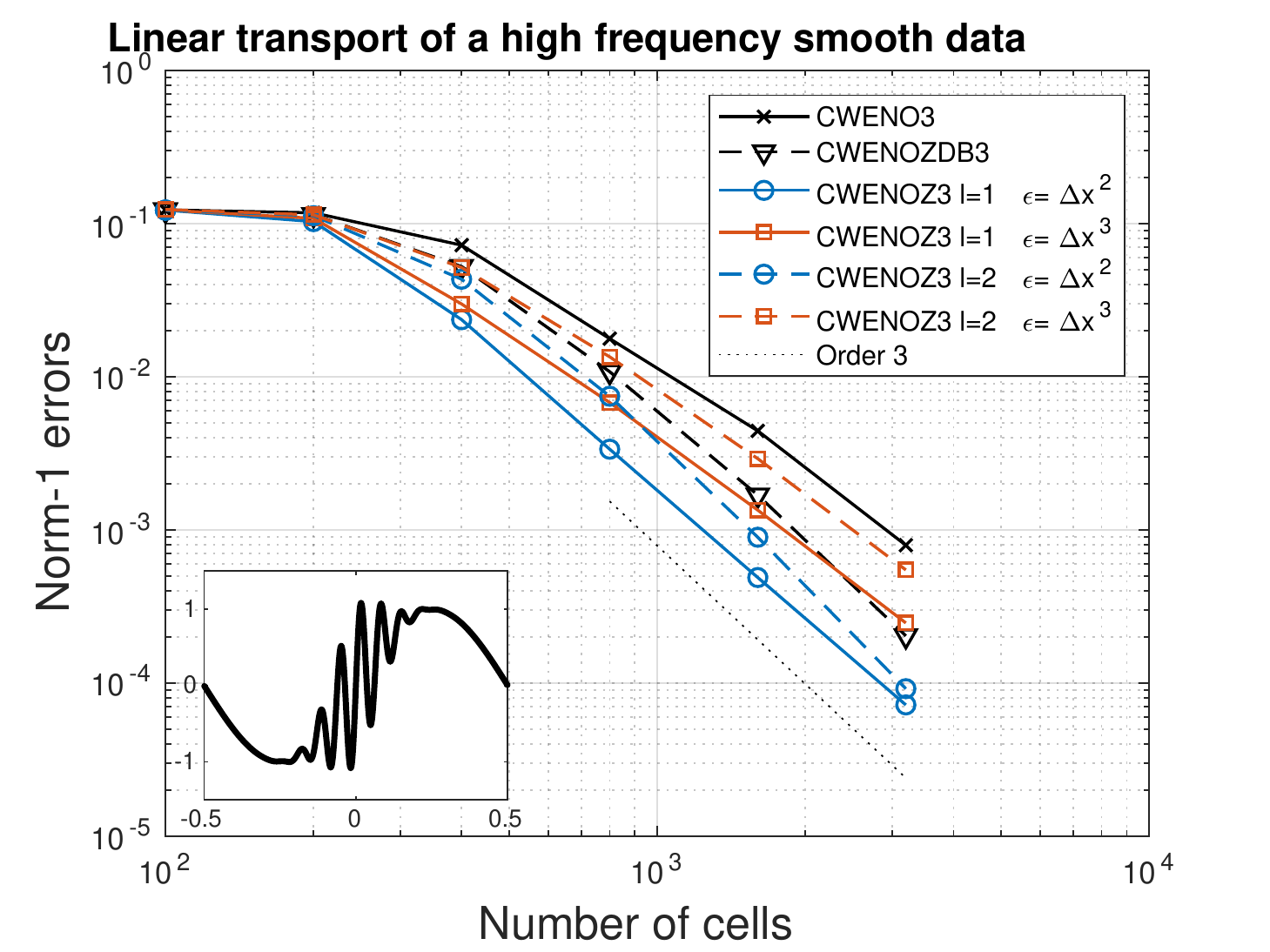}
	\caption{Convergence plot of the $\CWENO$, $\CWENOZDB$ and $\CWENOZ$ schemes of order $3$ for the high frequency datum~\eqref{eq:hfDatum}.} 
	\label{fig:hfConvergence}
\end{figure}

\begin{table}
	\begin{center}
		\caption{The accuracy for the linear transport test of the high frequency smooth data~\eqref{eq:hfDatum} with schemes of order $3$ and $5$.}
		\label{tab:hfErrors}
		\tiny
		\setlength{\tabcolsep}{3pt}
		\begin{tabular}{|r|rr|rr|rr|rr|rr|rr|} \hline
			& \multicolumn{2}{c|}{$\CWENO 3$} & \multicolumn{2}{c|}{$\CWENOZDB 3$} & \multicolumn{2}{c|}{$\CWENOZ 3$}
			& \multicolumn{2}{c|}{$\CWENO 5$} & \multicolumn{2}{c|}{$\CWENOZDB 5$} & \multicolumn{2}{c|}{$\CWENOZ 5$}
			\\
			& & & & & \multicolumn{2}{c|}{($\elle=2$, $\hat{m}=2$)} & & & & & \multicolumn{2}{c|}{($\elle=1$, $\hat{m}=3$)}
			\\
			cells & error & rate & error & rate & error & rate
			& error & rate & error & rate & error & rate
			\\
			\hline
			100 & 1.23e-01 & - & 1.22e-01 & - & 1.22e-01 & - 
			
			& 1.16e-01 & - & 1.05e-01 & - & 9.53e-02 & -\\
			
			200 & 1.17e-01 & 0.06 & 1.14e-01 & 0.10 & 1.12e-01 & 0.12 
			
			& 1.43e-02 & 3.03 & 6.02e-03 & 4.11 & 6.00e-03 & 4.00\\
			
			400 & 7.21e-02 & 0.70 & 5.23e-02 & 1.12 & 4.32e-02 & 1.38 
			
			& 6.08e-04 & 4.55 & 1.92e-04 & 4.97 & 1.93e-04 & 4.96\\
			
			800 & 1.77e-02 & 2.03 & 1.08e-02 & 2.28 & 7.45e-03 & 2.54 
			
			& 2.16e-05 & 4.82 & 6.02e-06 & 5.00 &  6.03e-06 & 5.00\\
			
			1600 & 4.43e-03 & 2.00 & 1.69e-03 & 2.68 & 8.95e-04 & 3.06 
			
			& 7.51e-07 & 4.85 & 1.88e-07 & 5.00 & 1.88e-07 & 5.00\\
			
			3200 & 7.93e-04 & 2.45 & 2.03e-04 & 3.05 & 9.18e-05 & 3.28
			
			& 2.73e-08 & 4.78 & 5.89e-09 & 5.00 & 5.89e-09 & 5.00\\ \hline
		\end{tabular}
	\end{center}
\end{table}

\paragraph{Linear transport of a non-smooth datum: the Jiang-Shu test.}
We again consider the linear scalar conservation law~\eqref{eq:linadv} but on the periodic domain $x\in[-1,1]$ and up to final time $T=8$.  Now we consider the non-smooth initial datum
\begin{subequations}\label{eq:jiangshu}
\begin{equation} 
	u_0(x) =
	\begin{cases}
		\frac16 \left( G(x,\beta,z-\delta) + G(x,\beta,z+\delta) + 4G(x,\beta,z) \right), & -0.8 \leq x \leq -0.6,\\
		1, & -0.4 \leq x \leq -0.2,\\
		1-\left|10(x-0.1)\right|, & 0 \leq x \leq 0.2,\\
		\frac16 \left( F(x,\alpha,a-\delta) + F(x,\alpha,a+\delta) + 4F(x,\alpha,a) \right), & 0.4 \leq x \leq 0.6,\\
		0, & \text{otherwise}		
	\end{cases}
\end{equation}
where
\begin{equation} 
	G(x,\beta,z) = \exp(-\beta(x-z)^2), \quad F(x,\alpha,a) = \sqrt{\max\{1 - \alpha^2(x - a)^2,0\}}
\end{equation}
\end{subequations}
and the constants are taken as $a = 0.5$, $z = -0.7$, $\delta = 0.005$, $\alpha = 10$ and $\beta = \log 2/36 \delta^2$. This problem, designed by Jiang and Shu in~\cite{JiangShu:96}, is used in order to investigate the properties of a scheme to transport different shapes with minimal dissipation and dispersion effects. The initial condition~\eqref{eq:jiangshu} is a combination of smooth and non-smooth shapes: precisely, from the left to the right side of the domain, we have a Gaussian, a square wave, a sharp triangle wave and a half ellipse.

\begin{figure}
	\centering
	\includegraphics[width=\linewidth]{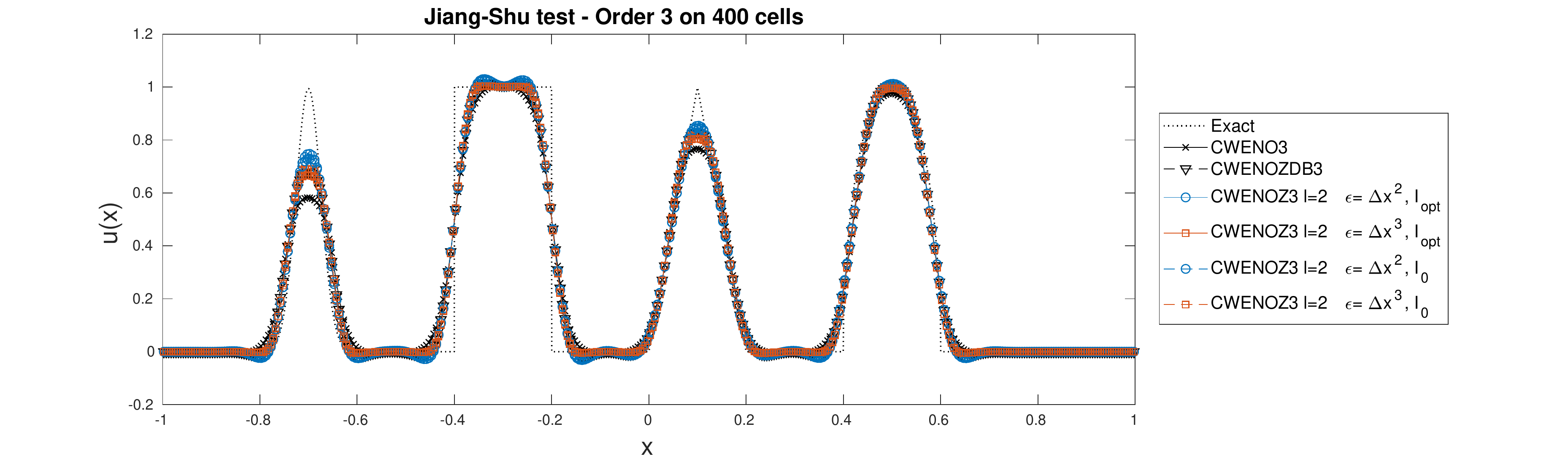}
	\\
	\includegraphics[width=0.49\linewidth]{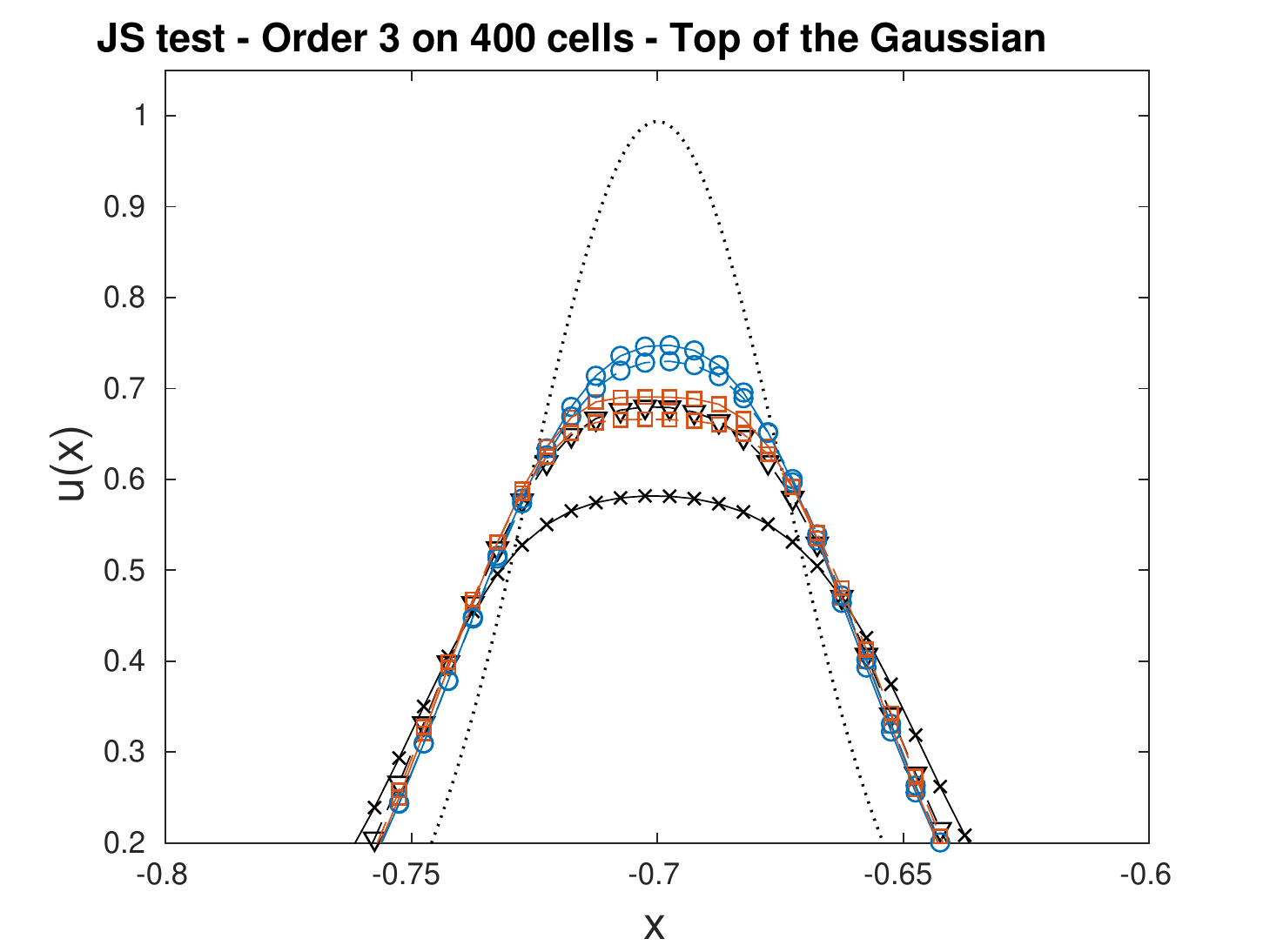}
	\includegraphics[width=0.49\linewidth]{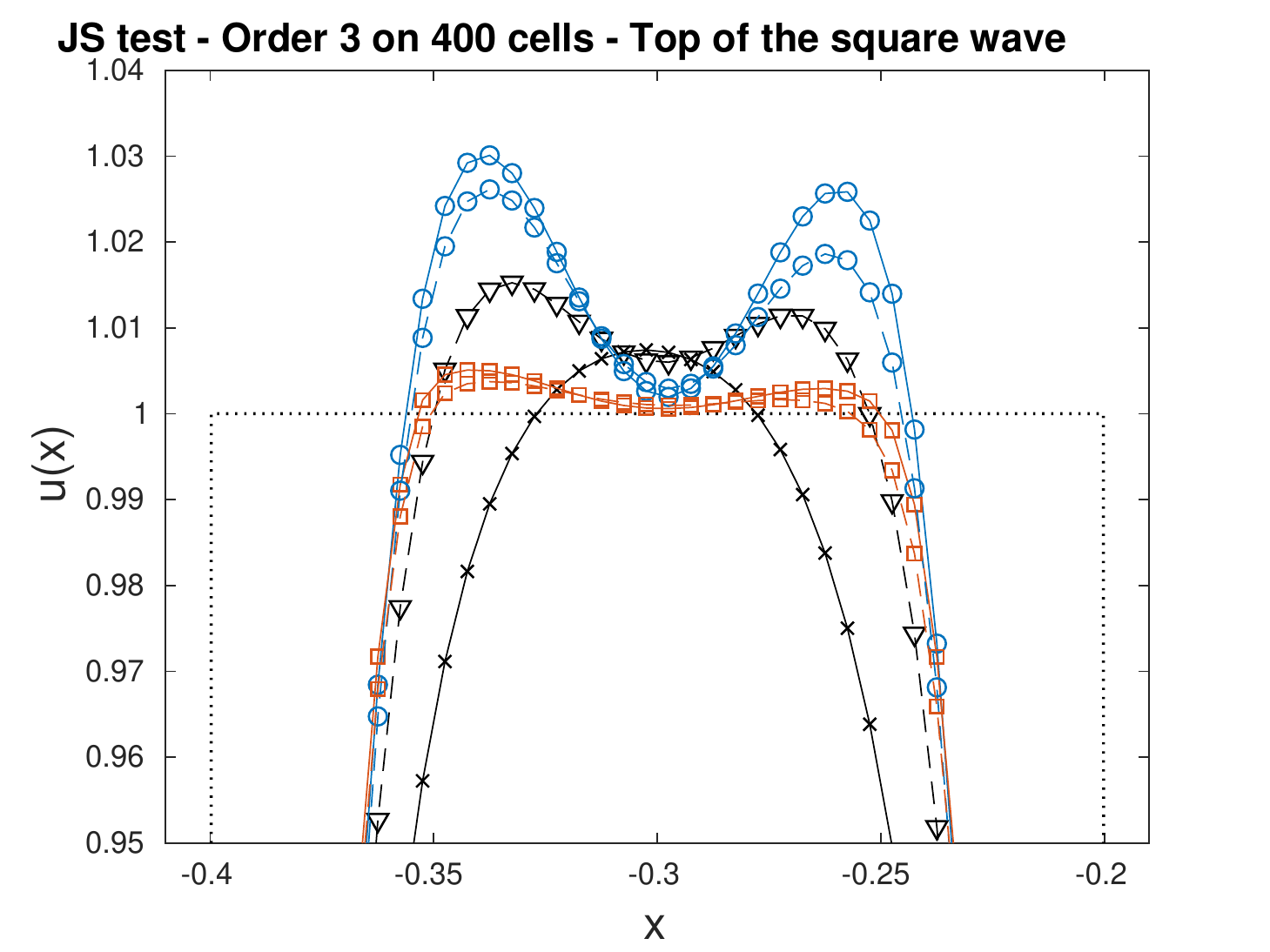}
	\caption{Top panel: numerical solution of the Jiang-Shu test problem~\eqref{eq:jiangshu} with schemes of order $3$ and $400$ cells. Bottom panels: top part of the Gaussian wave (left) and top part of the square wave (right).}
	\label{fig:solutionJiangShu3}
\end{figure}

\begin{figure}
	\centering
	\includegraphics[width=\linewidth]{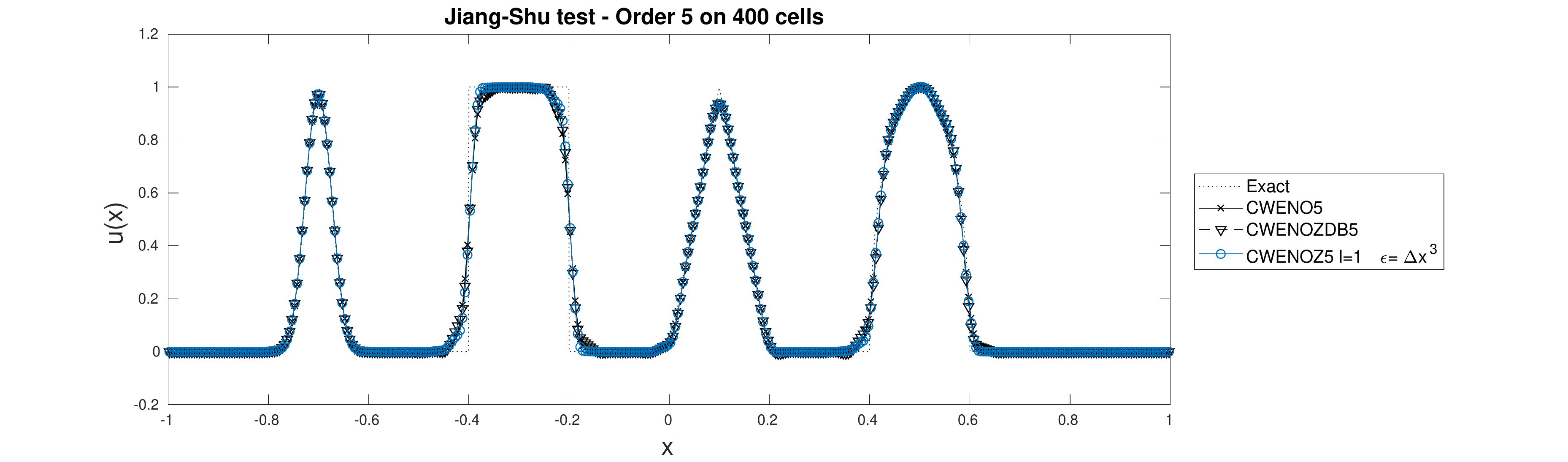}
	\\
	\includegraphics[width=0.49\linewidth]{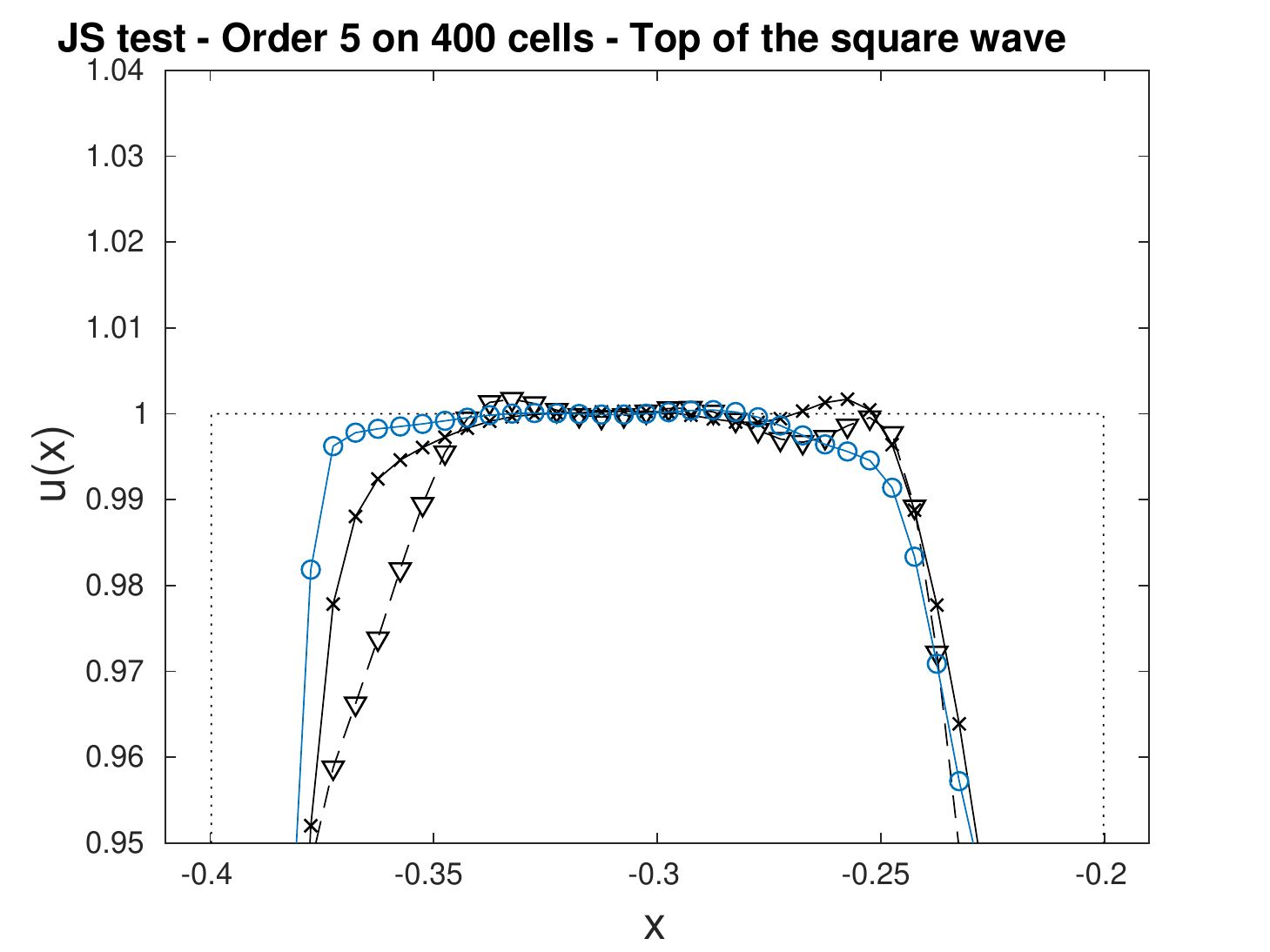}
	\includegraphics[width=0.49\linewidth]{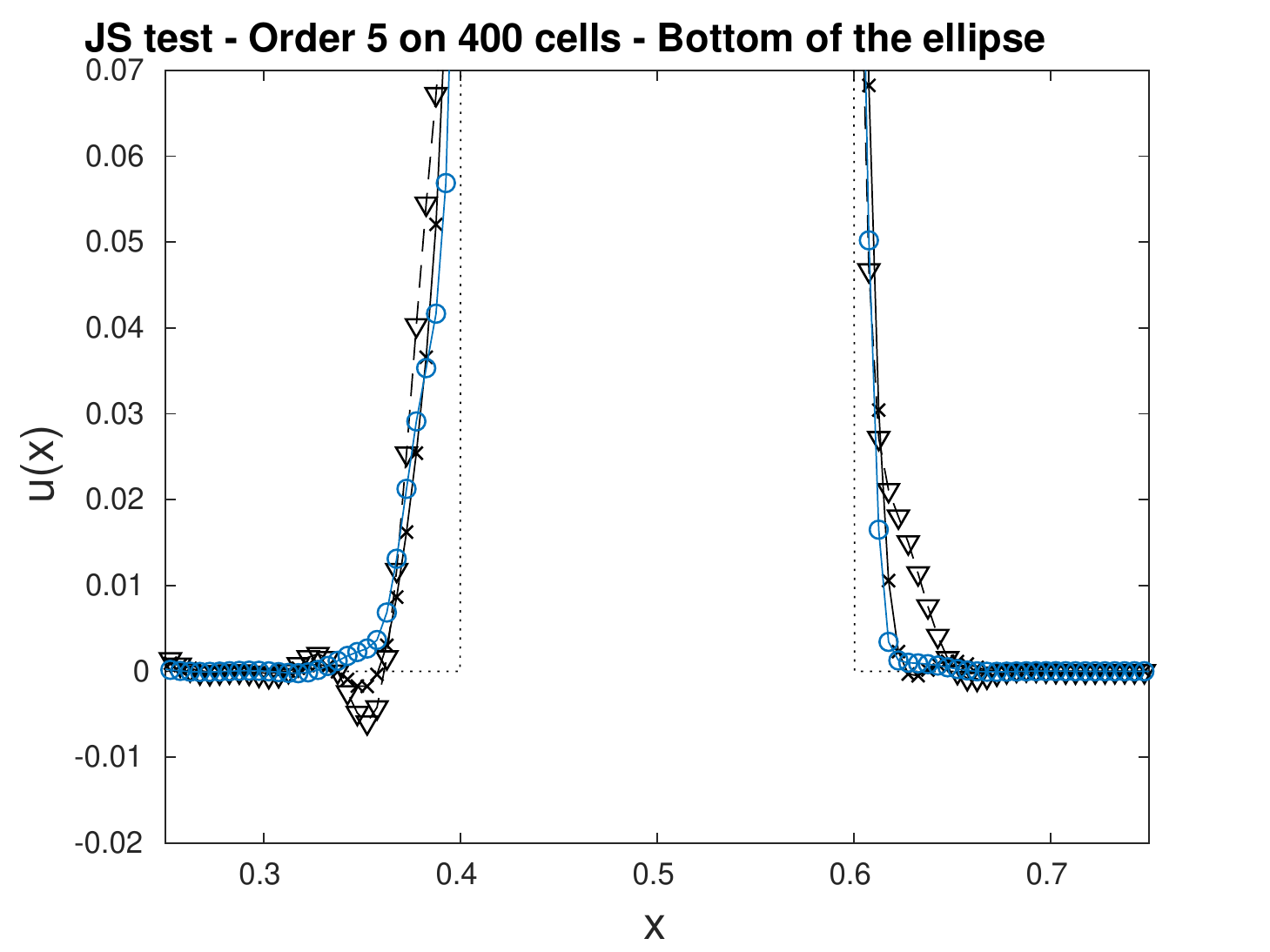}
	\caption{Top panel: numerical solution of the Jiang-Shu test problem~\eqref{eq:jiangshu} with schemes of order $5$ and $400$ cells. Bottom panels: top part of the square wave (left) and bottom part of the half ellipse (right).}
	\label{fig:solutionJiangShu5}
\end{figure}

The top panel of Figure~\ref{fig:solutionJiangShu3} shows the numerical solution of the Jiang and Shu test problem computed with the $\CWENO3$, $\CWENOZDB3$ and $\CWENOZ3$ schemes on $400$ cells. For the $\CWENOZ3$ schemes we compare the solutions for the parameters $\hat{m}=2,3$ with fixed $\elle=2$ since the choice $\elle=1$ leads to more oscillating schemes around the discontinuities of the square wave (see \S\ref{ssec:recaccuracy}). 
Moreover, for the same sets of parameters, we also consider the $\CWENOZ3$ schemes employing the indicator $I_0=I[P_0]$. 

In the bottom panels of Figure~\ref{fig:solutionJiangShu3} we focus on the top parts of the Gaussian wave and of the square waves since they give information on the behaviour of the schemes on smooth and non-smooth zones of the solution. 
In the case of a smooth profile, we observe that $\CWENOZ3$ and $\CWENOZDB3$ are less diffusive than $\CWENO3$. In particular, the $\CWENOZ3$ schemes with $\hat{m}=2$ provide a better approximation of the top of the Gaussian wave. We do not observe a significant difference in using $I_0=I[P_0]$. 
The situation for the top of the square wave (bottom right panel of Figure~\ref{fig:solutionJiangShu3}) the situation is more complex. The $\CWENOZ3$ scheme with $\hat{m}=2$ and the $\CWENOZDB3$ scheme seem to be more oscillating than $\CWENO3$. However, the choice of the parameter $\hat{m}=3$ for the $\CWENOZ3$ scheme allows to damp the spurious oscillations across the discontinuities and it also provides a less diffusive approximation than $\CWENO3$. In general, we also observe that using $I_0=I[P_0]$ for the $\CWENOZ3$ schemes mitigates the amplitude of the oscillations.

In Figure~\ref{fig:solutionJiangShu5} we show the numerical solution computed with the $\CWENO5$, the $\CWENOZDB5$ and the $\CWENOZ5$ schemes on $400$ cells. In this case, we consider only the parameters $\elle=1$ and $\hat{m}=3$ for the $\CWENOZ5$ scheme. All the schemes seem to accurately reproduce the solution at final time with significant improvements with respect to the schemes of order $3$. However, some zones deserve more attention since they show that the $\CWENOZ5$ scheme provides a better approximation: they are considered in the bottom panels of Figure~\ref{fig:solutionJiangShu5}. We show the top part of the square wave (left panel) and the bottom part of the half ellipse (right panel). We notice that the $\CWENO5$ scheme exhibits undershoots. On the contrary, the $\CWENOZDB5$ and $\CWENOZ5$ schemes avoid the oscillations. In particular, the $\CWENOZ5$ scheme has a slightly better resolution with less diffusivity close to the discontinuities showing that the new weights designed in Section~\ref{ssec:tau1d} improve the accuracy.

\paragraph{One-dimensional Euler equations: the shock-acoustic interaction test}
We consider the one-dimensional system of Euler equations for gas dynamics
\[ \partial_t \left( \begin{array}{c}
\rho \\ \rho u \\ E
\end{array}\right) +
\partial_x \left( \begin{array}{c}
\rho u \\ \rho u^2 + p \\ u(E+p)
\end{array}\right)  = 0,
\]
where $\rho$, $u$, $p$ and $E$ are the density, velocity, pressure and energy per unit volume of an ideal gas, whose equation of state is
$ E = \frac{p}{\gamma-1} + \frac12 \rho u^2, $
where $\gamma = 1.4$.

In this test we simulate the interaction of a strong shock with an acoustic wave on the domain $x\in[-5,5]$ with free-flow boundary conditions. The problem was introduced by Shu and Osher in~\cite{ShuOsher:89} and is characterized by a Mach 3 shock wave interacting with a standing sinusoidal density wave. The solution, behind the main strong shock, develops a combination of smooth waves and small discontinuities. The initial condition is
$$
	(\rho,u,p) = \begin{cases}
		(3.857143, 2.629369, 10.333333), & x < -4\\
		(1+0.2\sin(5x),0,1), & x \geq -4
	\end{cases}
$$
and we run the problem up to the final time $T=1.8$.

\begin{figure}
	\centering
	\includegraphics[width=\linewidth]{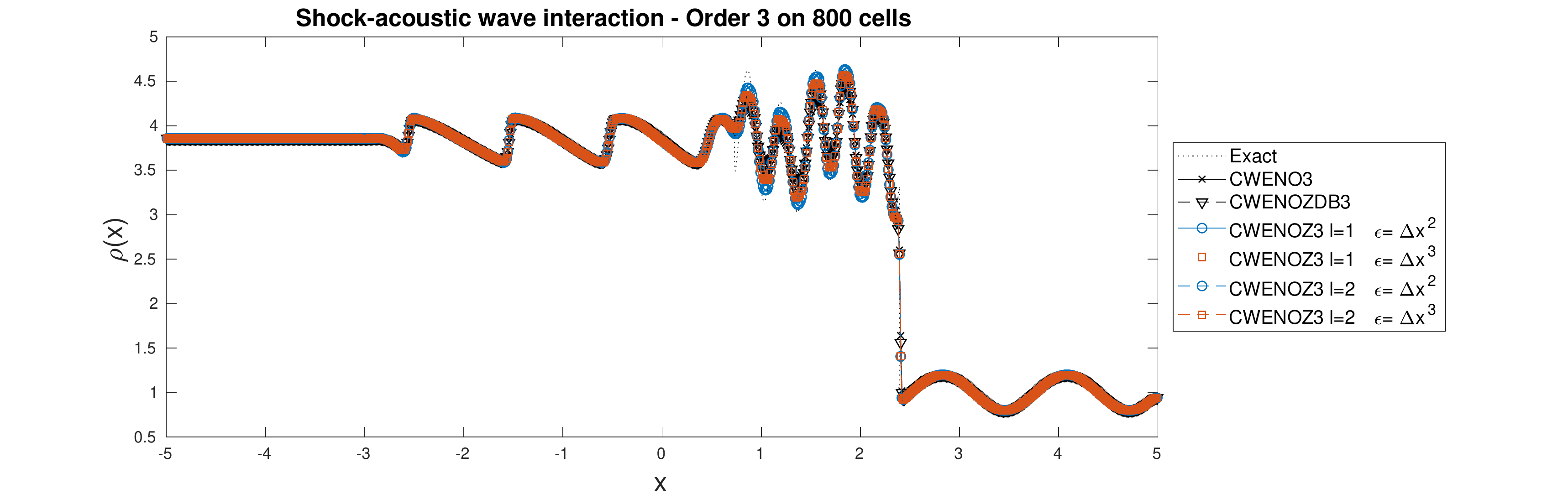}
	\\
	\includegraphics[width=0.49\linewidth]{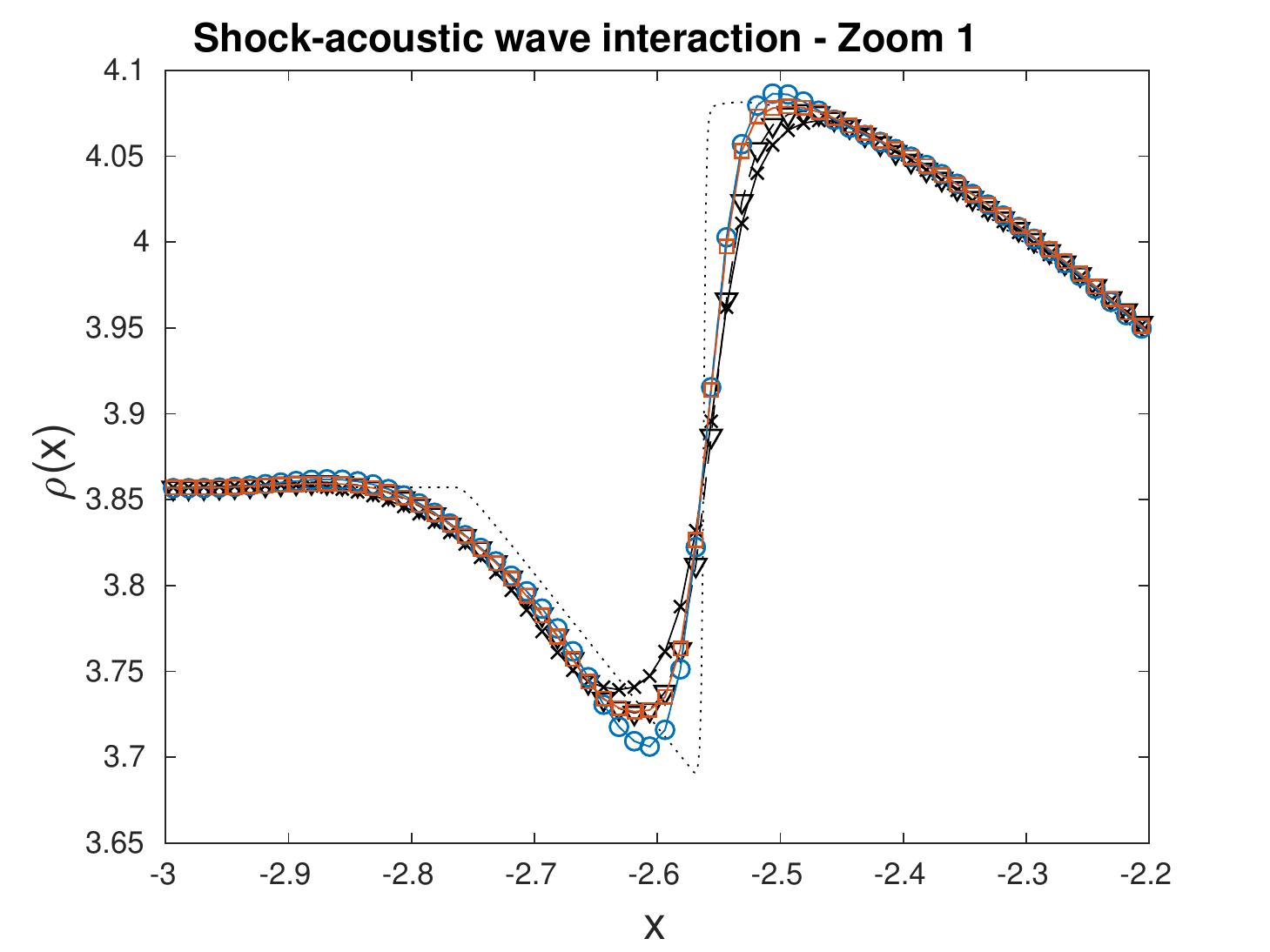}
	\includegraphics[width=0.49\linewidth]{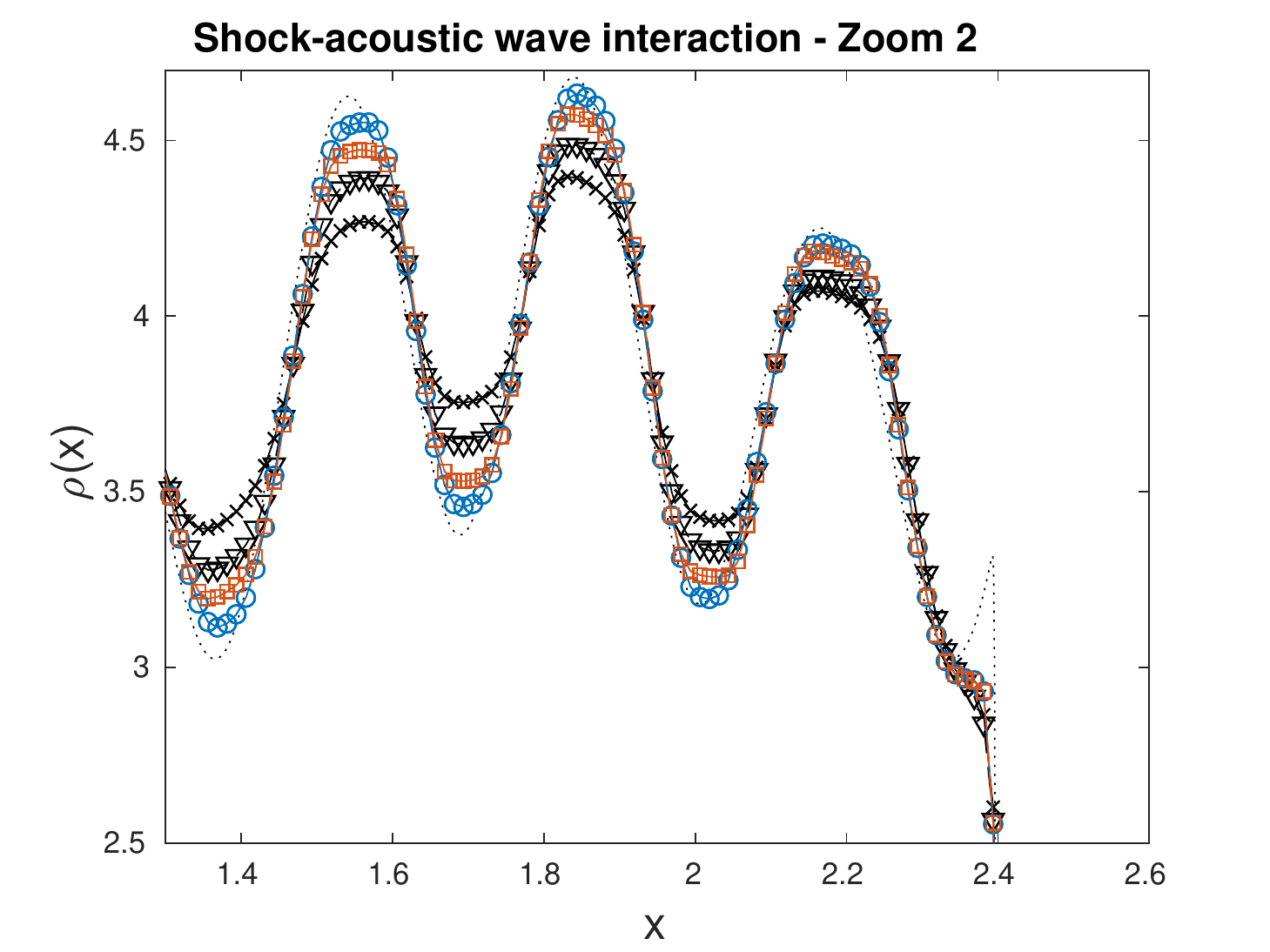}
	\caption{Numerical solution and zoom-in on two regions of the shock-acoustic wave interaction problem with schemes of order $3$ on $800$ cells.}
	\label{fig:shockacoustic3}
\end{figure}

Figure~\ref{fig:shockacoustic3} shows the numerical results computed with the $\CWENO3$, $\CWENOZDB3$ and $\CWENOZ3$ schemes on $800$ cells. 
For the $\CWENOZ3$ scheme, in view of the previous experiments, we consider $\elle=2$ with $\hat{m}=2,3$.
The bottom panels show the zoom-in of the solution on two regions of the computational domain. The reference solution (black dotted line) was generated using $8000$ cells and the fifth order $\CWENO$ scheme. 
We observe that $\CWENOZ3$, which employs the new improved weights introduced in Section~\ref{ssec:tau1d}, provides a better resolution, in particular with $\hat{m}=2$, of the turbulence region, which is characterized by the smooth high-frequency solution behind the main shock (bottom right panel of Figure~\ref{fig:shockacoustic3}). No extra oscillations at discontinuity is observed: the main shock and the shocklets are approximated better by the new $\CWENOZ3$ schemes, in particular with respect to $\CWENO3$, which is more diffusive. Moreover, note that, despite  the kink at the shock is under-resolved by the grid, no oscillations arise with any of the schemes.

\paragraph{Summary of the one-dimensional tests}
For \CWENOZ5, $\elle=1$ and $\hat{m}=3$ with the standard choice $I_0=I[\Popt]$ has performed well in all circumstances. Using $\hat{m}=4$ would yield even smaller spurious oscillations close to discontinuities, but in any case, the dependence of \CWENOZ5\ on the choice of $\elle$, $\hat{m}$ and $I_0$ proved to be weak.

On the other hand, \CWENOZ3\ exhibits a stronger dependence on the parameters. A sensible general choice is $\hat{m}=2$, which yields lower errors on smooth flows, coupled with $\elle=2$ and $I_0=I[P_0]$, which reduce the oscillations around discontinuities. However, we point out that $\hat{m}=3$ and $\elle=1$ produces significantly less spurious oscillations than the previous case and is comparable with \CWENOZDB3 in smooth parts: it could be a valid choice for problems where controlling spurious oscillations is more important than resolving the smooth parts on coarse grids.

\subsection{Two space dimensions}
\label{ssec:test2d}
In the following paragraphs, we consider test problems based on the two-dimensional system of Euler  equations for gas dynamics
\[ \partial_t \left( \begin{array}{c}
\rho \\ \rho u \\ \rho v \\ E
\end{array}\right) +
\partial_x \left( \begin{array}{c}
\rho u \\ \rho u^2 + p \\ \rho u v \\ u(E+p)
\end{array}\right) +
\partial_y \left( \begin{array}{c}
\rho v \\ \rho u v \\ \rho v^2 + p \\ v(E+p)
\end{array}\right)= 0,
\]
where $\rho$, $u$, $v$, $p$ and $E$ are the density, velocity in $x$ and $y$ direction, pressure and energy per unit mass. The thermodynamic closure is given by the equation of state
$
	E = \frac{p}{\gamma-1} + \frac12 \rho (u^2+v^2)
$,
where we take $\gamma=1.4$.

The scheme does not rely on dimensional splitting, but we make use of  the two-dimensional reconstruction procedure \CWENO3 of \cite{SCR:CWENOquadtree}, the novel \CWENOZ3  that employs the definition of $\hat{\tau}$ given in equation \eqref{eq:tau32d} with $t=1, u=2$, that is
\begin{equation}\label{eq:tau32d:ok}
\hat{\tau}_{3}= \vert  I_{NE}^{(1)} + I_{NW}^{(1)}+ I_{SE}^{(1)}+ I_{SW}^{(1)} -4 I_{0}^{(2)}  \vert
\end{equation}
and we also consider the alternative definition with $t=1,u=0$ that leads to
\begin{equation}\label{eq:tau32d:B}
\hat{\tau}_{3B}= \vert  I_{NE}^{(1)} - I_{NW}^{(1)} - I_{SE}^{(1)}+ I_{SW}^{(1)} \vert
\end{equation}
giving it the name  $\CWENOZ 3$(b). This latter does not make use the indicator of the optimal polynomial in the definition of $\hat{\tau}$. After the one-dimensional results, we consider here $\elle=2$, $\hat{m}=2$ and $I_0=I[\Popt]$.

For the computation of the numerical fluxes across the cell faces  2-point Gauss formulas are employed and their nodes dictate the position where the point-value reconstructions are computed, totalling 8 reconstructed values per cell. The rest of the numerical scheme is the straightforward generalization of the one-dimensional scheme. The implementation was carried on with the help of the PETSc libraries to handle parallelism and the simulations were run on a 24-cores node of the OCCAM cluster of the C3S centre of the Universit\`a di Torino ({\tt https://c3s.unito.it}).

\paragraph{Two-dimensional convergence test: the isentropic vortex}
The isentropic vortex problem is a test for the Euler equations in two space dimensions introduced by Shu in~\cite{Shu:97} and commonly used for testing the order of accuracy of a numerical scheme since an exact, smooth and analytic solution exists at all times.

The initial condition for the vortex problem is characterized by a uniform ambient flow with $T_\infty = 1.0$, more specifically
$
	(\rho_\infty,u_\infty,v_\infty,p_\infty) = (1.0,1.0,1.0,1.0)
$,
onto which the following isentropic perturbations are added in velocity and temperature:  
$$
	(\delta u, \delta v) = {\frac {\beta} {2 \pi}} \exp \left( {\frac {1-r^2} {2}} \right) (-y,x), \quad
	\delta T = - { \frac {(\gamma - 1 ) \beta^2} {8 \gamma \pi^2}} \exp \left( {1-r^2} \right),
$$
where $r=\sqrt{x^2+y^2}$. The initial state is thus given by
$$
	(\rho,u,v,p) = \left( \rho_\infty \left( \frac{T_\infty+ \delta T}{T_\infty} \right) ^{\frac{1}{\gamma-1}} , u_\infty+\delta u, v_\infty+\delta v, p_\infty \right).
$$
Here $\beta$ is the so-called strength of the vortex and is set to $\beta=5.0$.

The numerical tests are performed on the computational domain $[-5,5]\times [-5,5]$ with periodic boundary conditions. As a result of isentropy, the exact solution of this problem is simply characterized by a pure advection of the initial condition  with velocity $(u_\infty,v_\infty)$. At the final time $T=10$ the vortex is back to its original position and the final solution should match the initial one.
Since the solution is 
smooth, it should be simulated with optimal high accuracy and the limiting/stabilization procedure employed in the scheme should not have any effect.

\begin{table}
	\begin{center}
		\caption{The accuracy for the isentropic vortex test with schemes of order $3$.}
		\label{tab:vortex}
		\tiny
		\setlength{\tabcolsep}{3pt}
		\begin{tabular}{|r|rr|rr|rr|rr|rr|rr|} \hline
			& \multicolumn{6}{c|}{Density variable} & \multicolumn{6}{c|}{Energy variable}
			\\
			\hline
			& \multicolumn{2}{c|}{$\CWENO 3$} & \multicolumn{2}{c|}{$\CWENOZ 3$} & \multicolumn{2}{c|}{$\CWENOZ 3b$}
			& \multicolumn{2}{c|}{$\CWENO 3$} & \multicolumn{2}{c|}{$\CWENOZ 3$} & \multicolumn{2}{c|}{$\CWENOZ 3b$}
			\\
			cells & error & rate & error & rate & error & rate
			& error & rate & error & rate & error & rate
			\\
			\hline
			50   & 2.97e-01 &    - & 3.28e-01 &    - & 3.43e-01 &    - & 1.93e-00 &    - & 1.83e-00 &    - & 1.83e-00 &    -\\
			100  & 6.01e-02 & 2.31 & 6.41e-02 & 2.36 & 6.43e-02 & 2.41 & 3.23e-01 & 2.58 & 3.08e-01 & 2.57 & 3.08e-01 & 2.57\\
			200  & 9.15e-03 & 2.72 & 9.03e-03 & 2.83 & 9.04e-03 & 2.83 & 4.46e-02 & 2.86 & 4.24e-02 & 2.86 & 4.24e-02 & 2.86\\
			400  & 1.25e-03 & 2.87 & 1.15e-03 & 2.97 & 1.15e-03 & 2.97 & 5.73e-03 & 2.96 & 5.39e-03 & 2.97 & 5.39e-03 & 2.97\\
			800  & 1.61e-04 & 2.96 & 1.44e-04 & 3.00 & 1.44e-04 & 3.00 & 7.28e-04 & 2.98 & 6.82e-04 & 2.98 & 6.82e-04 & 2.98\\ 
			1600 & 2.02e-05 & 2.99 & 1.80e-05 & 3.00 & 1.80e-05 & 3.00 & 9.70e-05 & 2.91 & 9.12e-05 & 2.90 & 9.12e-05 & 2.90\\
			\hline
		\end{tabular}
	\end{center}
\end{table}

In Table~\ref{tab:vortex} we show the errors and the convergence rates in density and in total energy for the isentropic vortex test with the two-dimensional schemes of order $3$. We observe that all the schemes reach the theoretical order of convergence. Both $\CWENOZ$ reconstructions yield lower errors and better convergence rates than $\CWENO$. In particular, the $\CWENOZ 3$ scheme seems slightly better than the $\CWENOZ 3$(b).

However, in the tests involving strong shocks, we have observed that the indicator employed in \CWENOZ3(b), that does not take into account the central interpolating polynomial in the expression for $\hat{\tau}$, may lead to a breakdown of the simulations, in particular in the forward-facing step and in the double Mach reflection problems. For this reason, only the results for \CWENO3 and \CWENOZ3 are presented in the final part of the paper.

\paragraph{The forward facing step problem}
This problem was proposed by Emery~\cite{Emery:68} and Woodward and Colella~\cite{WoodwardColella:84}. It is characterized by a Mach 3 flow entering in a wind tunnel from the left. The tunnel has a reduction of size due to a step, which opposes the direction of the flow, emanating shock waves that later are reflected back by the top wall. At initial time the tunnel is filled with a uniform gas in the state
$$
	(\rho,u,v,p) = (\gamma, 3, 0, 1). 	
$$
 The rectangular computational domain is $[0,3] \times [0,1] \backslash [0.6,3] \times [0,0.2]$ and we use reflective wall boundary conditions on the upper and lower boundaries of the domain. Moreover, at the inflow boundary (left) we maintain the initial Mach 3 flow, while outflow boundary conditions are employed on the right end of the domain. The challenges of this problem are the stability in the initial boundary layer on the step and of  the flow around the corner, and the emergence of shock waves bouncing on the walls and interacting among themselves. In particular, the wave emerging from the triple point in the upper region is Rayleigh-Taylor unstable, but diffusive numerical schemes often smooth out the instability.

For the numerical simulations, the final time is set to $T=4$. First, we compare the solutions computed on a grid of $1920\times640$ cells (1M degrees of freedom) by the $\CWENO$ and $\CWENOZ$ schemes of order $3$. We point out that for both schemes no special treatment was needed at the corner of the step which is the center of a rarefaction fan. 

\begin{figure}
	\centering
	\includegraphics[width=\linewidth]{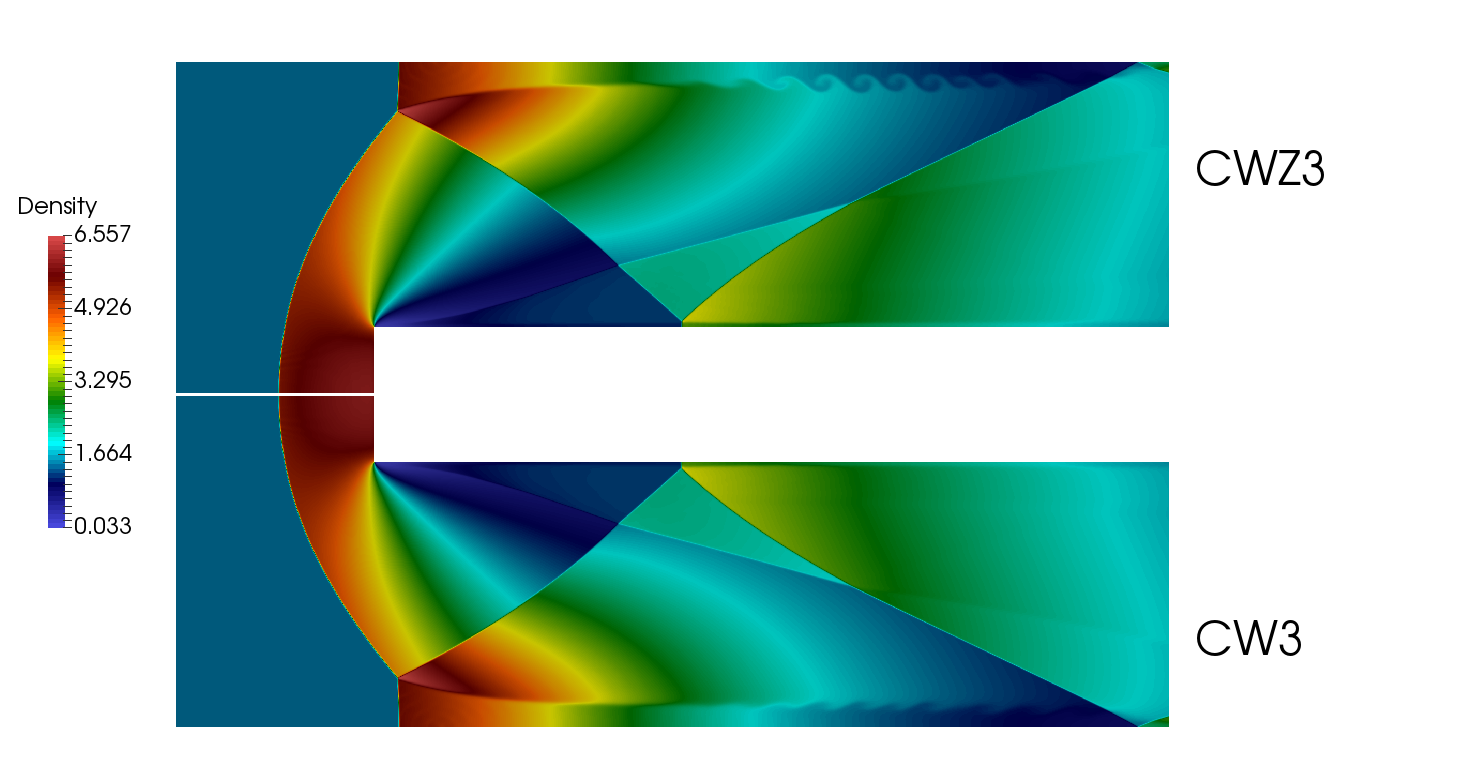}
	\caption{Numerical of the forward facing step problem at time $t=2.4$ with the $\CWENOZ$ (top) and $\CWENO$ (bottom) schemes of order $3$.}
	\label{fig:ffscompare:early}
\end{figure}

In Figure~\ref{fig:ffscompare:early} we plot the solutions at time $t=2.6$, when the contact discontinuity that emerges from the triple point has just formed. Here it is evident that the $\CWENOZ 3$ scheme can compute correctly the Rayleigh-Taylor instability of the contact, that appears almost stable with the $\CWENO 3$ scheme at this resolution. At later times, the instability diffuses out also with the \CWENOZ 3\ scheme.

\begin{figure}
	\centering
	\includegraphics[width=\linewidth]{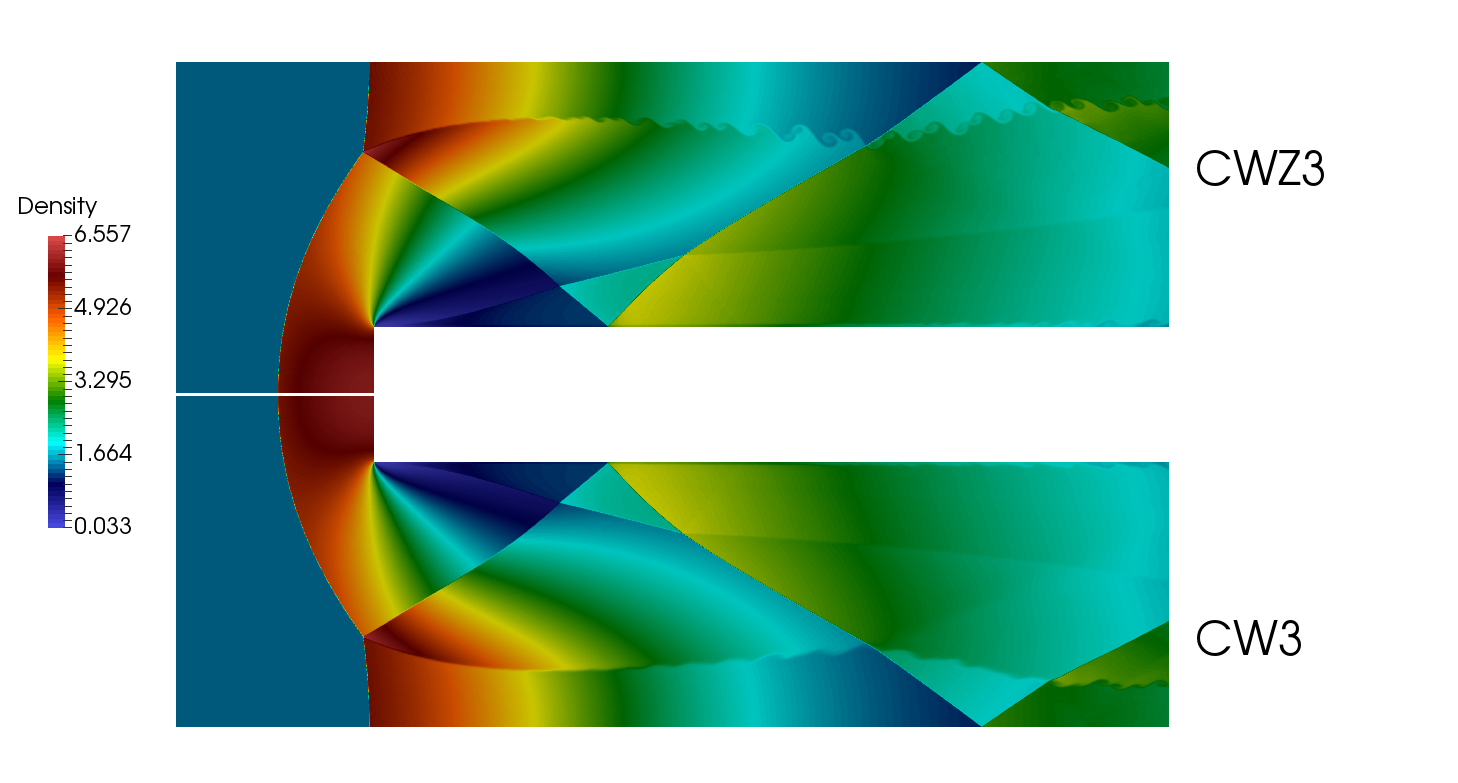}
	\caption{Numerical of the forward facing step problem at final time $T=4$ with the $\CWENOZ$ (top) and $\CWENO$ (bottom) schemes of order $3$ on a grid of $3480\times1280$ cells.}
	\label{fig:ffscompare:late}
\end{figure}

Next, in Figure~\ref{fig:ffscompare:late} we compare the solutions of the third order schemes at final time $T=4$ using an even finer grid ($3480\times1280$ cells).
It is evident that the curly instability patterns around the contact are maintained until final time by the $\CWENOZ 3$ scheme, while the $\CWENO 3$ scheme has completely diffused them even on this finer grid with 4M degrees of freedom.

\paragraph{Double Mach reflection problem}
The double Mach reflection problem of a strong shock was originally proposed by Woodward and Colella~\cite{WoodwardColella:84}. The problem is characterized by a Mach $10$  shock, which is incident on a ramp having an angle of $30^\circ$ with the $x$ axis. As in~\cite{WoodwardColella:84}, a suitable rotation is considered so that the ramp is aligned with the $x$ axis and therefore the shock forms an angle of $60^\circ$ with it. The initial conditions in front of and after the shock wave are given by
$$
	(\rho, u, v, p) = \begin{cases}
	(8.0, 8.25, 0.0, 116.5), & x'<1/6, \\
    (1.4, 0.0, 0.0, 1.0), & x'\geq 1/6, 
	\end{cases}
$$
where $x'$ is the coordinate along the shock direction in the rotated coordinate system. The rectangular computational domain is $[0,3.5] \times [0,1]$. Reflecting wall boundary conditions are prescribed on the bottom and the exact solution of an isolated oblique Mach $10$ shock wave is imposed on the upper boundary. Inflow and outflow boundary conditions are set on the left and the right sides. As the shock moves, it hits on the ramp and a complex shock reflection structure forms. This test is challenging due to the contemporaneous presence of strong waves, very weak ones and of complex smooth features in the so-called recirculation zone.

\begin{figure}
	\centering
	\includegraphics[height=4cm]{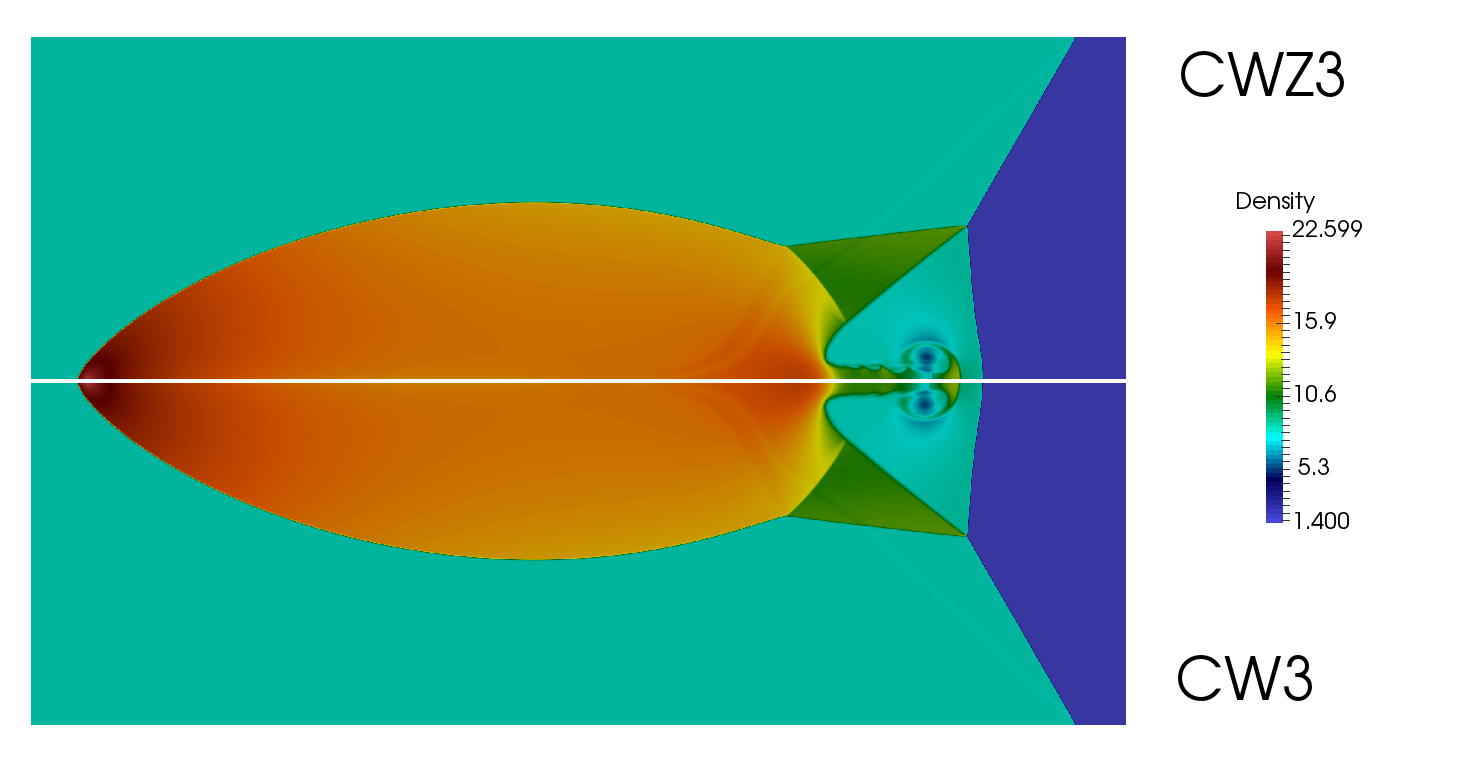}
    \includegraphics[height=4cm]{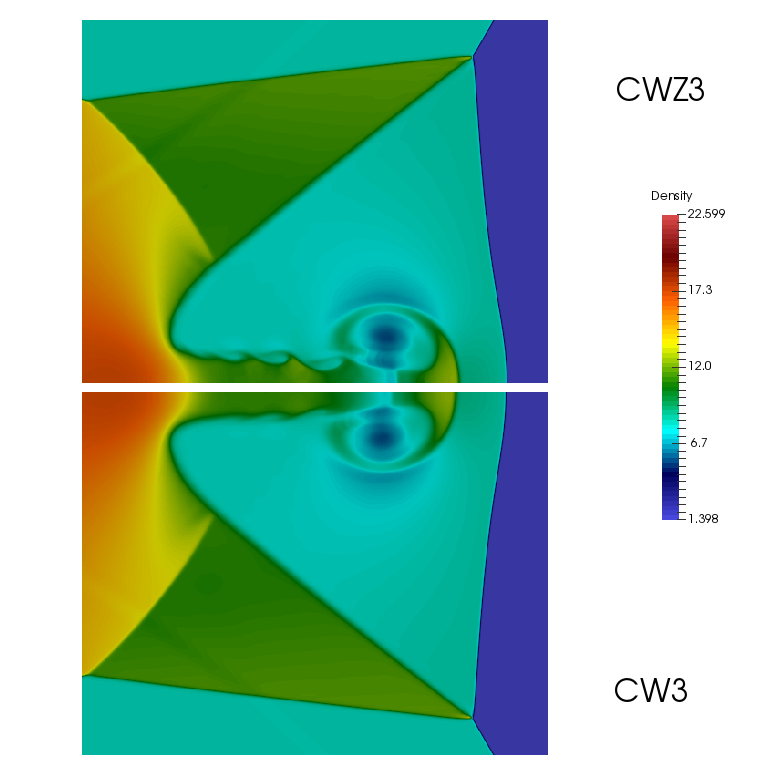}
    \caption{Numerical solution of the double Mach reflection problem with the $\CWENOZ$ (top) and $\CWENO$ (bottom) schemes of order $3$ on a grid of $2560\times 800$ cells.}
	\label{fig:dmr2560}
\end{figure}

The problem was run on a mesh of $2560 \times 800$ cells (2M degrees of freedom, Fig. \ref{fig:dmr2560}) and on a finer mesh of $5120 \time 1600$ cells (8M degrees of freedom, Fig.~\ref{fig:dmr5120}) up to final time $T=0.2$ when the front shock should be close to the right edge of the domain. 
The exact solution of this problem is not available but it is known that the discontinuity produces Kelvin-Helmholtz instabilities when high-resolution and sufficiently non-dissipative schemes are employed for the simulation. For this, both figures include also a zoom-in of the solution in the recirculation zone. 

We observe that the $\CWENO 3$ scheme is not able to reproduce the Kelvin-Helmholtz phenomena on the coarser grid, while the $\CWENOZ 3$ shows the instabilities across the moving stem, see Figure~\ref{fig:dmr2560}. On the finer grid, both schemes show the instability of the solution, see Figure~\ref{fig:dmr5120}. This proves that the $\CWENOZ 3$ scheme, with the new improved weights proposed in Section~\ref{ssec:tau2d} has a better accuracy than the classical version $\CWENO 3$.

\begin{figure}
	\centering
	\includegraphics[height=4cm]{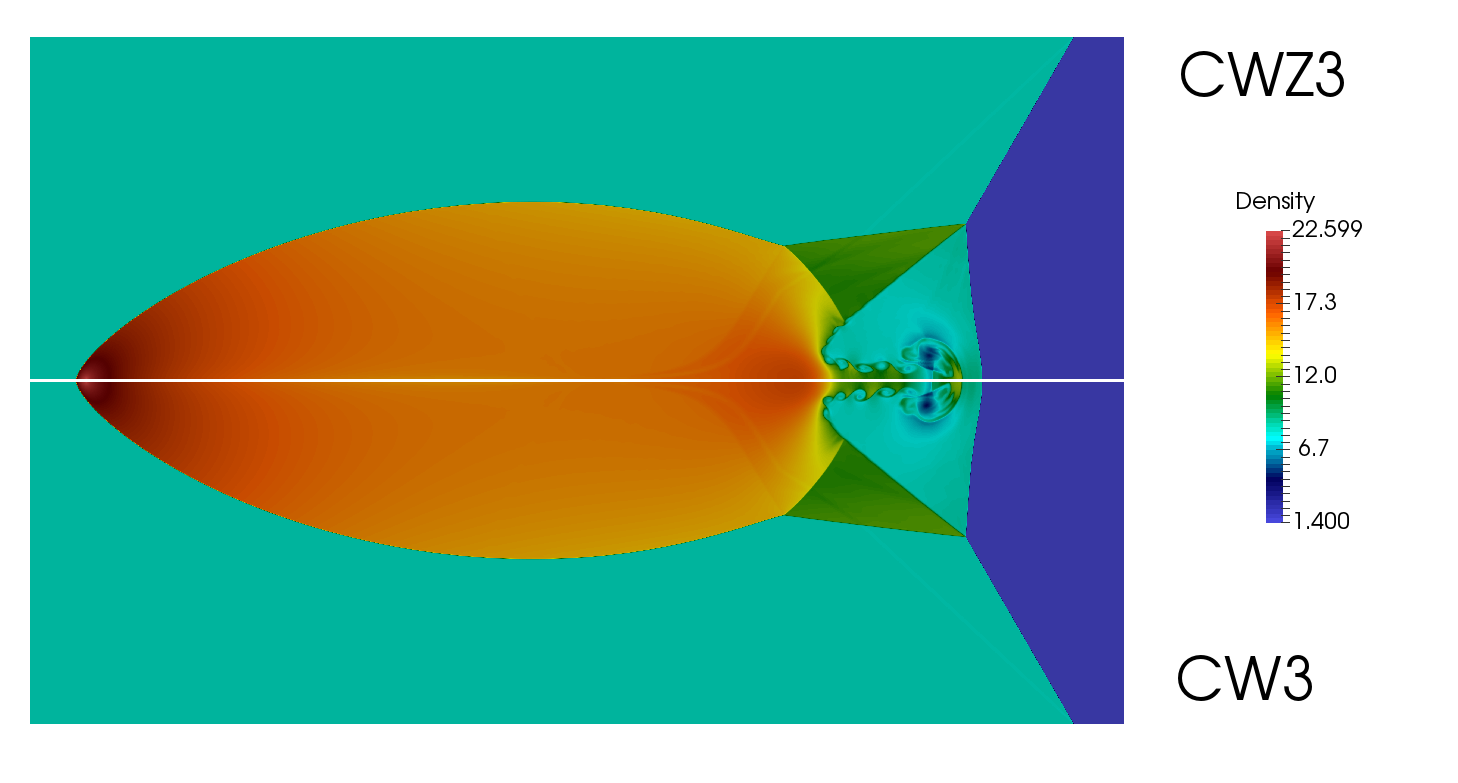}
	\includegraphics[height=4cm]{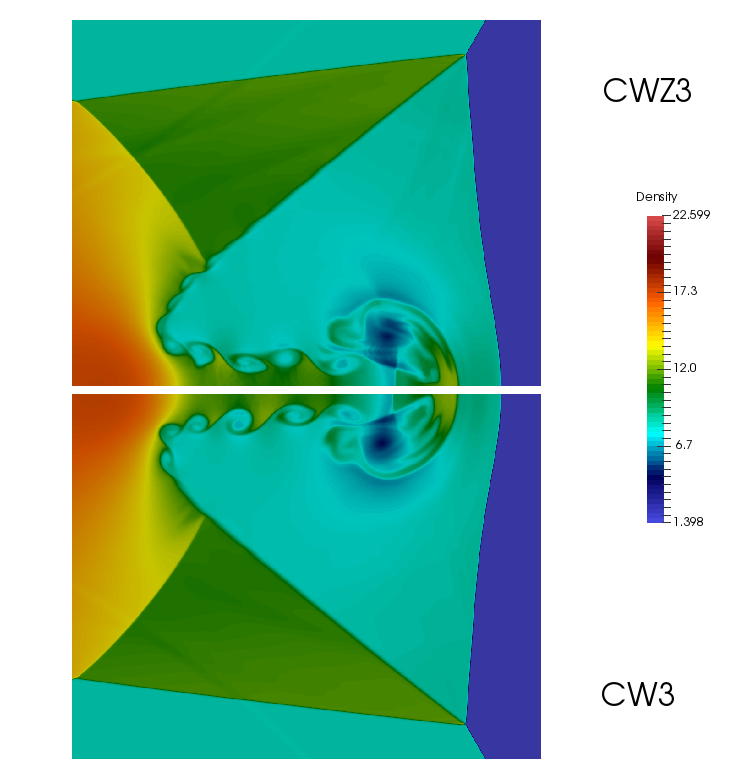}
    \caption{Numerical solution of the double Mach reflection problem with the $\CWENOZ$ (top) and $\CWENO$ (bottom) schemes of order $3$ on a grid of $5120\times 1600$ cells.}
	\label{fig:dmr5120}
\end{figure}

\paragraph{Shock-bubble interaction}
Here we consider the challenging problem in which a right-moving shock impinges on a standing bubble of gas at lower density, see \cite{CadaTorrilhon:09,SCR:CWENOquadtree}.
The computational domain is set to $\Omega=[-0.1, 1.6] \times [−0.5, 0.5]$ and the initial data are described considering three distinct areas: 
(A) the pre-shock region for $x < 0$, 
(B) the bubble  of center $(0.3, 0.0)$ and 
(C) radius $0.2$ and the post-shock region of all points with $x > 0$ that lie outside the bubble. 
The initial data are
\begin{center}
\begin{tabular}{l|llll}
&
\multicolumn{1}{c}{$\rho$} &
\multicolumn{1}{c}{$u$} &
\multicolumn{1}{c}{$v$} &
\multicolumn{1}{c}{$p$}
\\
\hline
A &
$11/3$ & $2.7136021011998722$ & $0.0$ & $10.0$
\\
B &
$0.1$ & $0.0$ & $0.0$ & $1.0$
\\
C &
$1.0$ & $0.0$ & $0.0$ & $1.0$
\\
\end{tabular}
\end{center}
Boundary conditions are of Dirichlet type on the left with the pre-shock data A, free-flow on the right and  reflecting on
$y=\pm0.5$. 
The symmetry in the $y$ variable permits to compute the numerical solution considering only in the upper half of the domain and symmetry boundary conditions at $y = 0$.
The final time is $T = 0.4$.

\begin{figure}
  \centering
  \includegraphics[width=\linewidth]{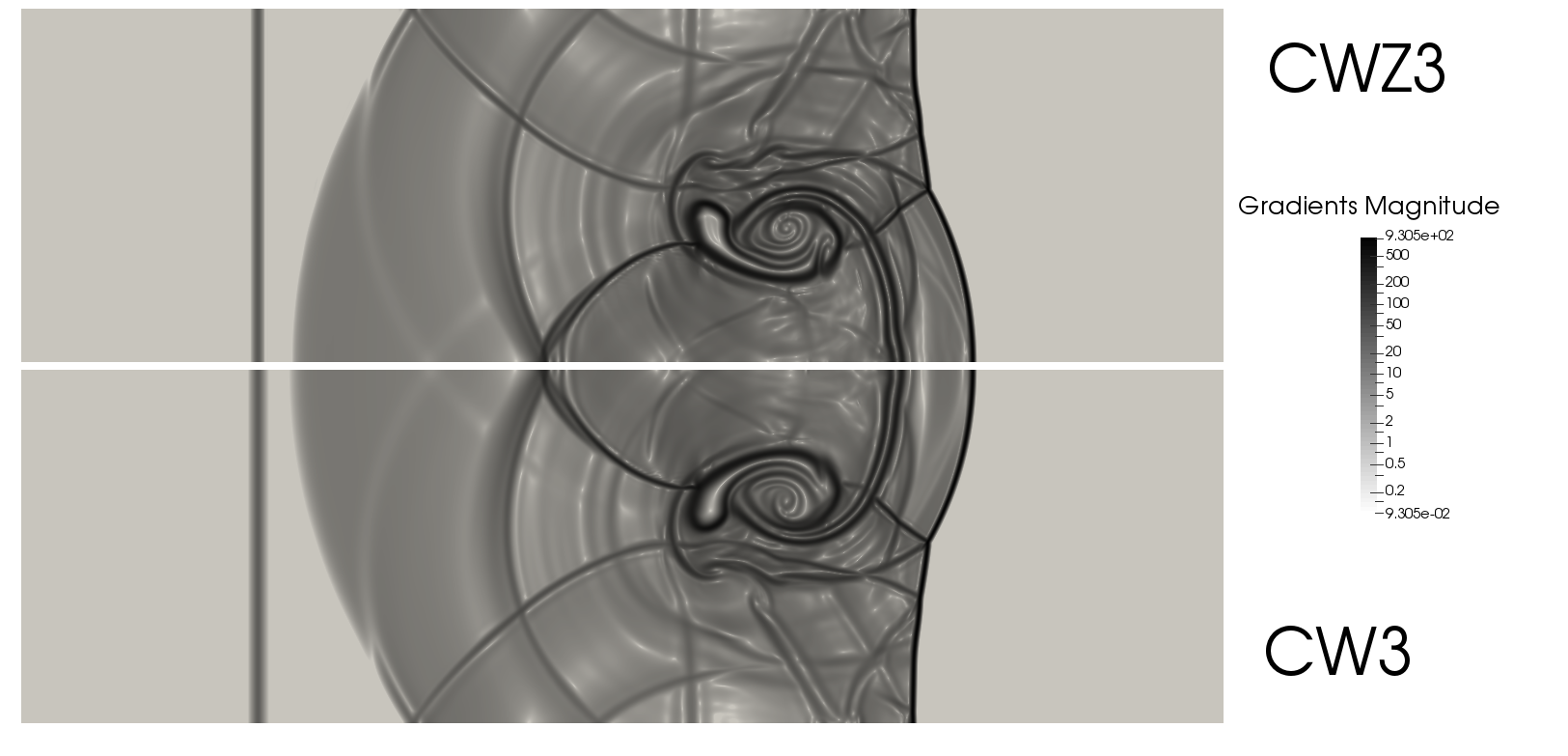}
  \\
  \includegraphics[width=\linewidth]{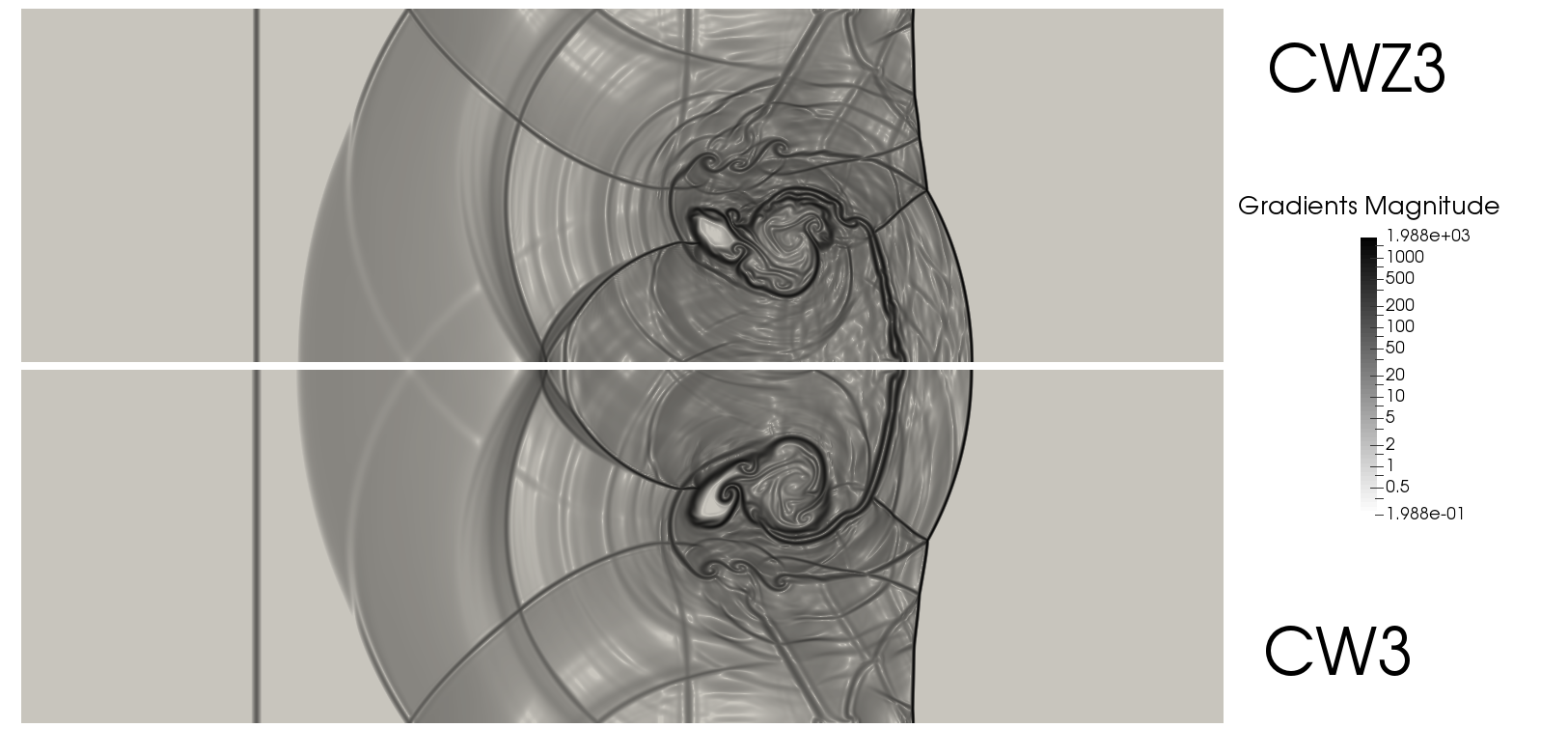}
  \caption{Numerical solution of the shock-bubble interaction problem at $t=0.32$ with the $\CWENOZ$ and $\CWENO$  schemes of order $3$ on a grid of $1360\times 400$ cells (top) and $2720\times800$ (bottom)}
	\label{fig:sb32}
\end{figure}

\begin{figure}
  \centering
  \includegraphics[width=\linewidth]{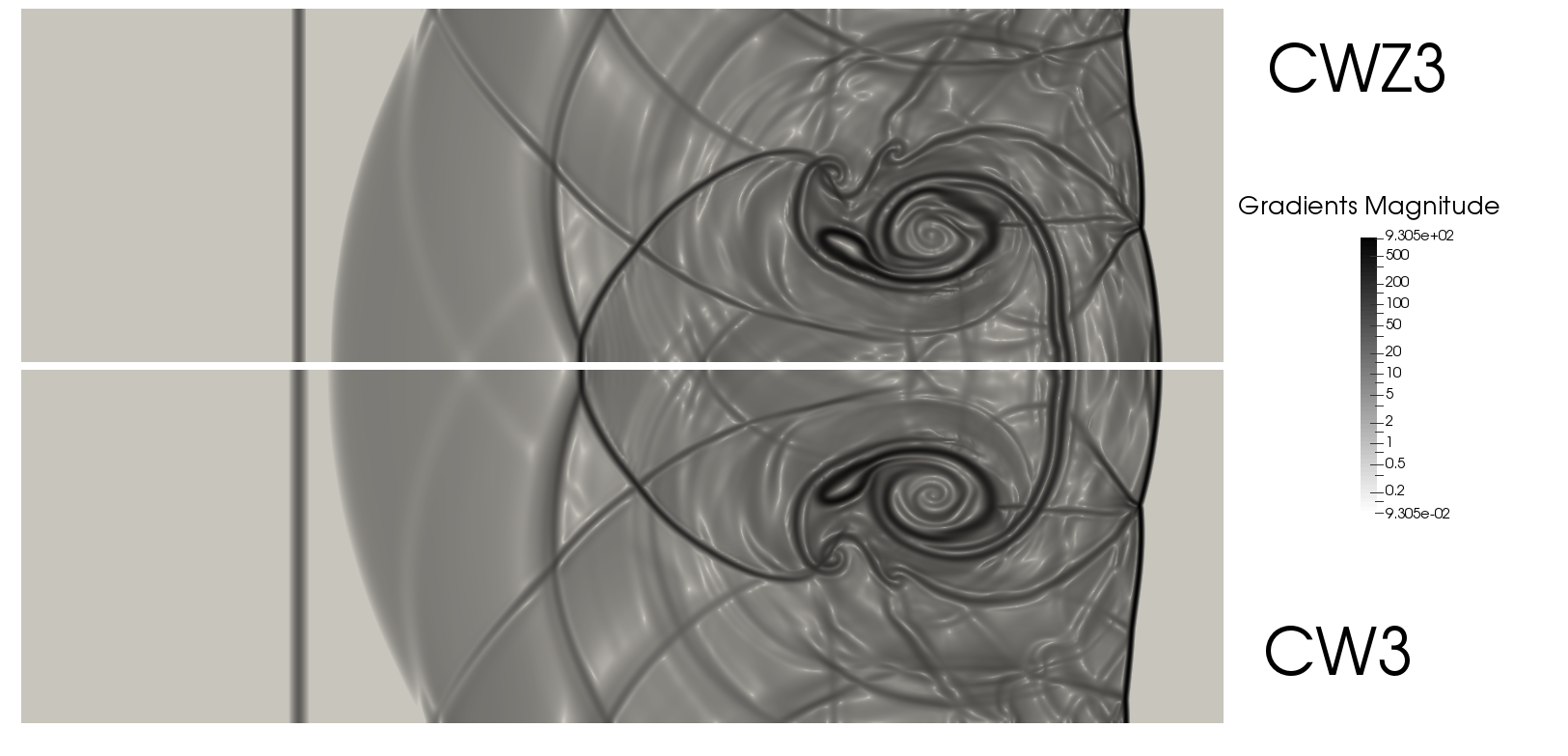}
  \\
  \includegraphics[width=\linewidth]{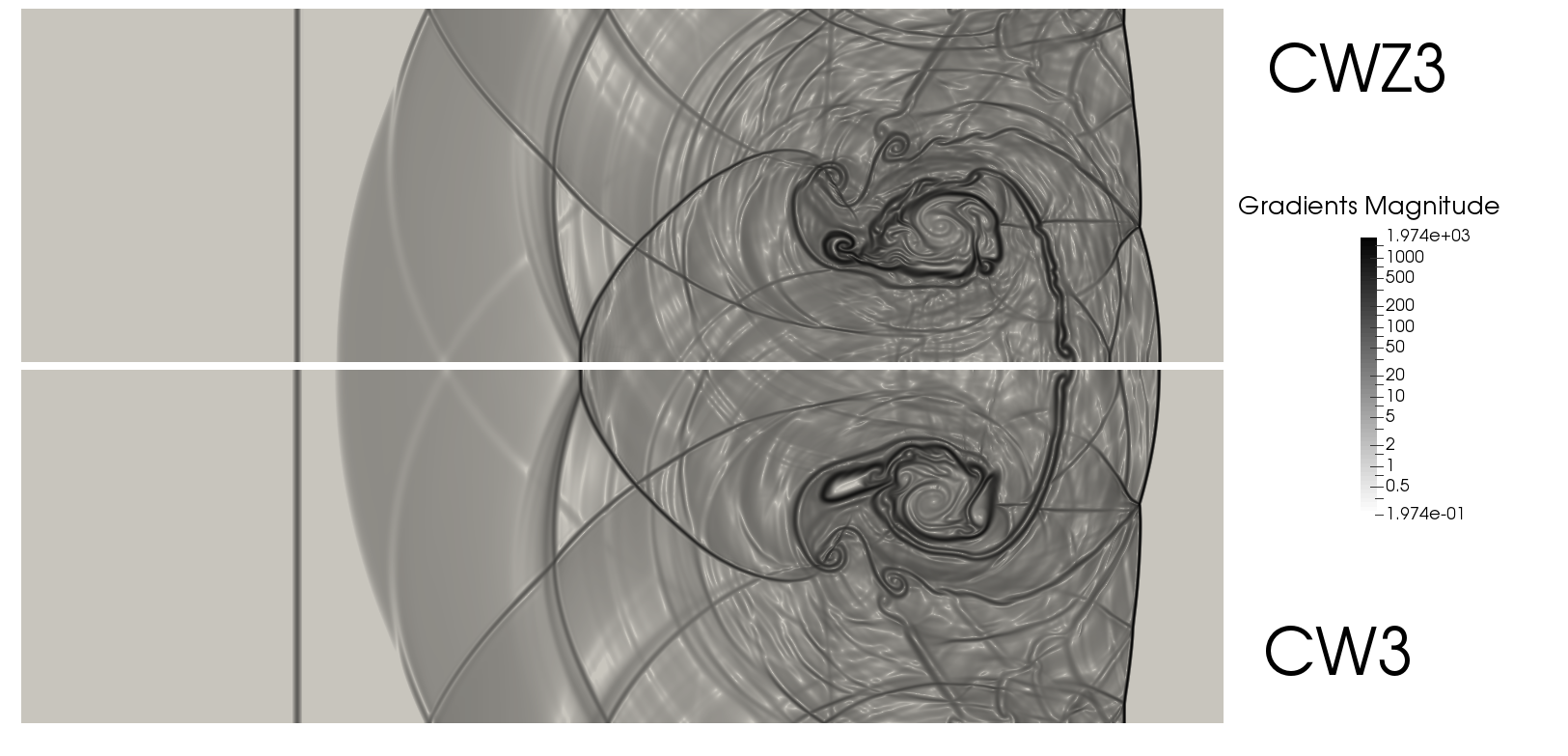}
  \caption{Numerical solution of the shock-bubble interaction problem at final time $T=0.4$ with the $\CWENOZ$ and $\CWENO$  schemes of order $3$ on a grid of $1360\times 400$ cells (top) and $2720\times800$ (bottom)}
	\label{fig:sb40}
\end{figure}

For this test we present the comparison of the solutions computed with \CWENO3 and \CWENOZ3 at different times and different resolutions. The more remarkable differences can be appreciated in the Schlieren plots, see Fig. \ref{fig:sb32} and \ref{fig:sb40}. When the shock impinges on the low-density bubble, it pushes forward and deforms it, while being refracted through it. The refracted shock then bounces back towards the bubble, creating a very complex interaction pattern. Moreover the bubble is known to be unstable and the \CWENOZ3 is able to show its instability already on the coarser grid, where small differences start to appear, while on the finer grid it computes a much more complex breaking and interaction pattern, due to its enhanced resolution.

\section{Conclusions}
\label{sec:concl}

In this paper we have analysed the optimal definition of the global smoothness indicator  that is employed in the computation of the \WENOZ-style non-linear weights in the setting of Central \WENO\ reconstructions.
The analysis is performed in multidimensions, avoiding the use of dimensional splitting, so that is be generalizable to less regular grid setups than the ones considered here.

To this end, in particular, we have proven asymptotic expansions of the Jiang-Shu smoothness indicators in $\R^n$ and derived a general result on the consistency order of \CWENO\ and \CWENOZ\ reconstructions. 
Next, we considered again the \CWENOZ\ reconstructions introduced in \cite{CPSV:coolweno} and, with the help of the abovementioned results, we defined the optimal $\hat{\tau}$ indicator for this setup. Finally, we have introduced a third order \CWENOZ\ reconstruction in two space dimensions that is based on the same stencils of the \CWENO\ one of \cite{SCR:CWENOquadtree,CWENOandaluz}.
Numerous one- and two-dimensional numerical tests confirmed the improved resolution of the new schemes.

The optimal global smoothness indicators defined for the reconstructions of this paper are constructed on the basis of results on the asymptotic expansions of the Jiang-Shu indicators that are quite general. We think that these results may prove very useful in the design of \CWENOZ\ reconstructions in the future, based on different choice of meshes, stencils and polynomial degrees.

\paragraph{Acknowledgements}
This work was supported by the
``National Group for Scientific Computation (GNCS-INDAM)''.
Some of the material used for this work has been discussed during the SHARK-FV 2018 conference.

\bibliographystyle{spmpsci}      
\bibliography{CSVCWENOZbiblio} 

\appendix
\section{Smoothness measuring matrices} 
\label{sec:appendix}

\subsection{Indicators for 1D \CWENO\ and \CWENOZ\ of order 3 and 5} 
Let us consider $q(x)= \sum_{i=0}^M a_i x^i $, with $M=4$ and $x\in\RR.$ By definition \eqref{eq:ind} and taking $\Omega_0 = \left[ -\frac{\Delta x}{2},\frac{\Delta x}{2} \right]$, the Jiang-Shu indicator is
\begin{equation} \label{eq:ind4} I[q]= a_1^2 \Delta x^2 + \left( \frac {13} 3 a_2^2 + \frac 1 2 a_1 a_3 \right) \Delta x^4 + \left( \frac {3129} {80} a_3^2 + \frac {21} 5 a_2 a_4 \right) \Delta x^6 + \frac {87617} {140} a_4^2  \Delta x^8.
\end{equation}
The matrices involved in Proposition \ref{prop:exA} are 

$$  \vec{U}= \left[
\begin{array}{@{\hspace{1pt}}c@{\hspace{2pt}}c@{\hspace{2pt}}c@{\hspace{2pt}}c@{\hspace{2pt}}c@{\hspace{1pt}}}
1 & 0 & \frac 1 {12} & 0 & \frac 1 {80}\\
0 & 1 & 0 & \frac 1 4 & 0 \\
0 & 0 & 2 & 0 & 1 \\
0 & 0 &0 & 6 & 0 \\
0 & 0 & 0 & 0 & 24
\end{array}%
\right], 
\; 
\vec{B} = \left[
\begin{array}{@{\hspace{1pt}}c@{\hspace{2pt}}c@{\hspace{2pt}}c@{\hspace{2pt}}c@{\hspace{2pt}}c@{\hspace{1pt}}}
1 & 0 & \frac 1 {12} & 0 & \frac 1 {80}\\
 0 & \frac 1 {12}& 0 & \frac 1 {80}  & 0\\
\frac 1 {12} & 0 & \frac 1 {80} & 0 & \frac 1 {448}\\
0 & \frac 1 {80} &0 & \frac 1 {448} & 0 \\
\frac 1 {80} & 0 & \frac 1 {448} & 0 & \frac 1 {2304}
\end{array}%
 \right], \;
\vec{A}= 
\left[
\begin{array}{@{\hspace{1pt}}c@{\hspace{2pt}}c@{\hspace{2pt}}c@{\hspace{2pt}}c@{\hspace{2pt}}c@{\hspace{1pt}}}
1 & 0 & 0 & 0 & 0\\
0 & \frac 1 {12} & 0 & -\frac 1 {720} & 0\\
0 & 0 & \frac 1 {720} & 0 & - \frac 1 {30240}\\
0 & -\frac 1 {720} &0 & \frac 1 {30240} & 0 \\
0 & 0 &  - \frac 1 {30240} & 0 & \frac 1 {1209600}
\end{array}%
\right] $$
and
$$ \vec{w} = \vec{U} \, \vec{D}\, \vec{a} = \left( \begin{array}{c}
a_0 \Delta x+ \frac 1 {12} a_2 \Delta x^3  + \frac 1 {80} a_4 \Delta x^5\\
a_1 \Delta x^2 + \frac 1 4 a_3 \Delta x^4 \\
2 a_2 \Delta x^3 + a_4 \Delta x^5 \\
6 a_3 \Delta x^4 \\
24 a_4 \Delta x^5
\end{array}
\right).
$$
%
The matrices involved in Proposition
 \ref{prop:ind} are
$$  \vec{Q}^1= \left[
\begin{array}{@{\hspace{1pt}}c@{\hspace{2.5pt}}c@{\hspace{2.5pt}}c@{\hspace{2.5pt}}c@{\hspace{2.5pt}}c@{\hspace{1pt}}}
0 & 1 &  0 &  0 &  0 \\
0 & 0 &  1 &  0 &  0 \\
0 & 0 &  0 &  1 &  0 \\
0 & 0 &  0 &  0 &  1 \\
0 & 0 &  0 &  0 &  0 \\
\end{array}
\right], \;
\vec{Q}^2= \left[
\begin{array}{@{\hspace{1pt}}c@{\hspace{2.5pt}}c@{\hspace{2.5pt}}c@{\hspace{2.5pt}}c@{\hspace{2.5pt}}c@{\hspace{1pt}}}
0 & 0 &  1 &  0 &  0 \\
0 & 0 &  0 &  1 &  0 \\
0 & 0 &  0 &  0 &  1 \\
0 & 0 &  0 &  0 &  0 \\
0 & 0 &  0 &  0 &  0 \\
\end{array}
\right], 
\; 
\vec{Q}^3= \left[
\begin{array}{@{\hspace{1pt}}c@{\hspace{2.5pt}}c@{\hspace{2.5pt}}c@{\hspace{2.5pt}}c@{\hspace{2.5pt}}c@{\hspace{1pt}}}
0 & 0 &  0 & 1 &0 \\
0 & 0 & 0 & 0 & 1 \\
0 & 0 & 0 & 0 &0 \\
0 & 0 &0 & 0 &0 \\
0 & 0 &0 & 0 &0
\end{array}
\right]
,\;
\vec{Q}^4= \left[
\begin{array}{@{\hspace{1pt}}c@{\hspace{2.5pt}}c@{\hspace{2.5pt}}c@{\hspace{2.5pt}}c@{\hspace{2.5pt}}c@{\hspace{1pt}}}
0 & 0 &  0 & 0 &1  \\
0 & 0 & 0 & 0 & 0\\
0 & 0 & 0 & 0 &0 \\
0 & 0 &0 & 0 &0 \\
0 & 0 &0 & 0 &0
\end{array}
\right]
$$
and then
$$
\vec{C}  = \sum_{ \alpha  =1}^4  (\vec{Q}^{\alpha})^T \vec{A} \vec{Q}^{\alpha} = 
\left[
\begin{array}{@{\hspace{1pt}}c@{\hspace{7pt}}c@{\hspace{2pt}}c@{\hspace{2pt}}c@{\hspace{2pt}}c@{\hspace{1pt}}}
0 & 0 &  0  & 0 & 0\\
0 & 1 & 0 & 0 & 0\\
0 & 0 & \frac {13} {12} & 0 & - \frac 1 {720} \\
0 & 0 &0 & \frac {781} {720} & 0 \\
0 & 0 & - \frac 1 {720} & 0 & \frac {32803}  {30240}
\end{array}%
\right]
$$
and, introducing $\vec{v}= \frac 1 {\Delta x}\vec{w}$, 
we obtain \eqref{eq:ind4} as 
$ I[q]= \langle \vec{v},  \vec{C} \, \vec{v} \rangle $.

From these matrices, we can obtain the indicators of all the polynomials of degrees $M \le 4$. 
Infact, for example, a polynomial of degree $M=2$, corresponds to assuming $a_4=a_3=0$ in the coefficients vector $\vec{a}$. Then, the corresponding indicator is obtained by using $3 \times 3$ left upper submatrices of the above ones. In this way  we are able to compute all
the indicators of polynomials involved in 1D \CWENO and \CWENOZ of order 3 and 5. 

\subsection{Indicators for 2D \CWENO\ and \CWENOZ\ of order 3}
Let us consider $q(\vec{x})=\sum_{\vert \aalpha \vert \le 2} a_{\aalpha}
{\vec{x}}^{{\aalpha}}$, with $M=2$ and $\vec{x}\in\RR^2$. For $ \boldsymbol{\Delta} \vec{x}= (h,k)$, 
taking $\Omega_0 = \left[ -\frac{h}{2},\frac{h}{2} \right] \times \left[ -\frac{k}{2},\frac{k}{2} \right] $, the Jiang-Shu indicator is
\begin{equation} \label{eq:ind42D} 
I[q] 
= 
\sum_{\elle=1}^2 \! \sum_{\vert \aalpha \vert = \elle} 
\!\!
\boldsymbol{\Delta} \vec{x}^{2 \aalpha -\auno} 
\!\!\!
\int_{\Omega}
\!
 (\partial^{\elle}_{\aalpha} q (\vec{x}))^2 \dx 
= a_{10}^2 h^2 + a_{01}^2 k^2 + \frac {13} 3 a_{20}^2 h^4 + \frac 7 6 a_{11}^2 h^2 k^2 + \frac {13} 3 a_{02}^2 k^4.
\end{equation}
The matrices involved in Proposition
\ref{prop:exA} are 
$$  
\vec{U}= 
\left[
\begin{array}{@{\hspace{1pt}}c@{\hspace{2pt}}|@{\hspace{2pt}}c@{\hspace{4pt}}c@{\hspace{2pt}}|@{\hspace{1pt}}c@{\hspace{2pt}}c@{\hspace{2pt}}c@{\hspace{1pt}}}
1 & 0 & 0 & \frac 1 {12} & 0 &  \frac 1 {12} \\
\hline
0 & 1 & 0 & 0 & 0 & 0  \\
0 & 0 & 1 & 0 & 0 & 0  \\
\hline
0 & 0 & 0 & 2 & 0 & 0 \\
0 & 0 &0 &0 & 1 & 0 \\
0 & 0 &0 &0 & 0 & 2
\end{array}
\right], \;  
\vec{B}= \left[
\begin{array}{@{\hspace{1pt}}c@{\hspace{1pt}}|@{\hspace{1pt}}c@{\hspace{2pt}}c@{\hspace{1pt}}|@{\hspace{1pt}}c@{\hspace{2pt}}c@{\hspace{2pt}}c@{\hspace{1pt}}}
1 & 0 & 0 & \frac 1 {12} & 0 &  \frac 1 {12} \\
\hline
0 & \frac 1 {12} & 0 & 0 & 0 & 0  \\
0 & 0 & \frac 1 {12} & 0 & 0 & 0  \\
\hline
\frac 1 {12} & 0 & 0 & \frac 1 {80} & 0 & \frac 1 {144} \\
0 & 0 &0 &0 & \frac 1 {144} & 0 \\
\frac 1 {12} & 0 &0 &\frac 1 {144} & 0 & \frac 1 {80}
\end{array}
\right] 
,\;
\vec{A}= 
\left[
\begin{array}{@{\hspace{1pt}}c@{\hspace{1pt}}|@{\hspace{1pt}}c@{\hspace{2pt}}c@{\hspace{1pt}}|@{\hspace{1pt}}c@{\hspace{2pt}}c@{\hspace{2pt}}c@{\hspace{1pt}}}
1 & 0 & 0 & 0 & 0 &  0 \\
\hline
0 & \frac 1 {12} & 0 & 0 & 0 & 0  \\
0 & 0 & \frac 1 {12} & 0 & 0 & 0  \\
\hline
0 & 0 & 0 & \frac 1 {720} & 0 & 0 \\
0 & 0 &0 &0 & \frac 1 {144} & 0 \\
0 & 0 &0 &0 & 0 & \frac 1 {720}
\end{array}
\right].$$
The blocks in the matrices correspond to the terms of homogeneous degree.
Also
\begin{equation} \vec{w} = \vec{U}\,\vec{D}\, \vec{a} = \left( \begin{array}{c}
a_{00} h k+ \frac 1 {12} a_{20} h^3 k + \frac 1 {12} a_{02} h k^3 \\
\hline
a_{10} h^2 k \\
a_{01} h k^2 \\
\hline
2 a_{20} h^3 k \\
a_{11} h^2 k^2 \\
2 a_{02}h k^3
\end{array}
\right). \label{eq:vectorw}
\end{equation}
From Proposition \ref{prop:ind} we obtain the matrices $
\vec{Q}^{\abeta} $ of \eqref{eq:matrixQ}with $ \vert \abeta \vert =1, 2$ as
$$  
\vec{Q}^{(1,0)} = 
\left[
\begin{array}{@{\hspace{2pt}}c@{\hspace{2pt}}|@{\hspace{2pt}}c@{\hspace{4pt}}c@{\hspace{2pt}}|@{\hspace{2pt}}c@{\hspace{4pt}}c@{\hspace{4pt}}c@{\hspace{1pt}}}
0& 1 & 0 & 0 & 0 & 0 \\
\hline
0& 0 & 0 & 1 & 0 & 0 \\
0& 0 & 0 & 0 & 1 & 0 \\
\hline
0& 0 & 0 & 0 & 0 & 0 \\
0& 0 & 0 & 0 & 0 & 0 \\
0& 0 & 0 & 0 & 0 & 0 
\end{array} \right], 
\quad 
\vec{Q}^{(0,1)} = 
\left[
\begin{array}{@{\hspace{2pt}}c@{\hspace{2pt}}|@{\hspace{2pt}}c@{\hspace{4pt}}c@{\hspace{2pt}}|@{\hspace{2pt}}c@{\hspace{4pt}}c@{\hspace{4pt}}c@{\hspace{1pt}}}
0& 0 & 1 & 0 & 0 & 0 \\
\hline
0& 0 & 0 & 0 & 1 & 0 \\
0 &0 & 0 & 0 & 0 & 1 \\
\hline
0& 0 & 0 & 0 & 0 & 0 \\
0& 0 & 0 & 0 & 0 & 0 \\
0& 0 & 0 & 0 & 0 & 0 
\end{array} \right],
$$
$$
\vec{Q}^{(2,0)} = 
\left[
\begin{array}{@{\hspace{2pt}}c@{\hspace{2pt}}|@{\hspace{2pt}}c@{\hspace{4pt}}c@{\hspace{2pt}}|@{\hspace{2pt}}c@{\hspace{4pt}}c@{\hspace{4pt}}c@{\hspace{1pt}}}
0 & 0 & 0 & 1 & 0 & 0  \\
\hline 
0 & 0 & 0 & 0 & 0 & 0\\ 
0 & 0 & 0 & 0 & 0 &  0\\ 
\hline 
0 & 0 & 0 & 0 & 0 & 0\\ 
0 & 0 & 0 & 0 & 0 & 0\\ 
0 & 0 & 0 & 0 & 0 & 0
\end{array} \right], 
\quad
\vec{Q}^{(1,1)} = 
\left[
\begin{array}{@{\hspace{2pt}}c@{\hspace{2pt}}|@{\hspace{2pt}}c@{\hspace{4pt}}c@{\hspace{2pt}}|@{\hspace{2pt}}c@{\hspace{4pt}}c@{\hspace{4pt}}c@{\hspace{1pt}}}
0 & 0 & 0 & 0 & 1 & 0  \\
\hline 
0 & 0 & 0 & 0 & 0 & 0\\ 
0 & 0 & 0 & 0 & 0 &  0\\ 
\hline 
0 & 0 & 0 & 0 & 0 & 0\\ 
0 & 0 & 0 & 0 & 0 & 0\\ 
0 & 0 & 0 & 0 & 0 & 0
 \end{array}
 \right], \quad
 \vec{Q}^{(0,2)} = 
\left[
\begin{array}{@{\hspace{2pt}}c@{\hspace{2pt}}|@{\hspace{2pt}}c@{\hspace{4pt}}c@{\hspace{2pt}}|@{\hspace{2pt}}c@{\hspace{4pt}}c@{\hspace{4pt}}c@{\hspace{1pt}}}
0 & 0 & 0 & 0 & 0 & 1  \\
\hline 
0 & 0 & 0 & 0 & 0 & 0\\ 
0 & 0 & 0 & 0 & 0 &  0\\ 
\hline 
0 & 0 & 0 & 0 & 0 & 0\\ 
0 & 0 & 0 & 0 & 0 & 0\\ 
0 & 0 & 0 & 0 & 0 & 0
 \end{array}
\right].
$$
Finally, introducing $\vec{v}= \frac 1 {\boldsymbol{\Delta} \vec{x}^{\auno}} \vec{w} = \frac 1 {hk} \vec{w}$ we have 
\begin{equation} \vec{C}  = \sum_{\vert \aalpha \vert =1}^2  (\vec{Q}^{\aalpha})^T \vec{A} \vec{Q}^{\aalpha} = 
\left[
\begin{array}{@{\hspace{2pt}}c@{\hspace{2pt}}|@{\hspace{2pt}}c@{\hspace{4pt}}c@{\hspace{2pt}}|@{\hspace{2pt}}c@{\hspace{4pt}}c@{\hspace{4pt}}c@{\hspace{1pt}}}
0 & 0 &  0  & 0 & 0 & 0 \\
\hline
0 & 1 &  0  & 0 & 0 & 0 \\
0 & 0 &  1  & 0 & 0 & 0 \\
\hline
0 & 0 &  0  & \frac {13} {12} & 0 & 0 \\
0 & 0 &  0  & 0 & \frac 7 6 & 0 \\
0 & 0 &  0  & 0 & 0 & \frac {13} {12} \\
\end{array}%
\right] \label{eq:matrixC} \end{equation}
and consequently we obtain \eqref{eq:ind42D} as 
$ I[q]= \langle \vec{v},  \vec{C} \, \vec{v} \rangle $.

All the previous matrices are computed by using the order $(1,0), \, (0,1)$ for $\vert \aalpha \vert =1$  and  $(2,0), \,(1,1), \, (0,2)$ for $ \vert \aalpha \vert =2.$ 

We note that $I[q] = \langle \vec{v},  \vec{C}\,\vec{v}  \rangle $ with $q \in \Poly{1}_{2} $ is obtained by using the $2 \times 2$ left upper  block submatrices,
i.e. the  blocks related to $\vert \aalpha \vert =0,1. $

\end{document}